\documentclass[reqno,12pt]{article}
\usepackage{latexsym}
\usepackage{amsmath}
\usepackage{amssymb}
\usepackage{bbm}
\usepackage{amsthm}
\usepackage{color}
\usepackage{dsfont}
\usepackage{mathtools}

\usepackage{graphicx}
\usepackage{epsfig}
\usepackage{catchfilebetweentags}
\usepackage[
backend=biber,
style=numeric,
]{biblatex}
\addbibresource{refs.bib}

\definecolor{Red}{rgb}{1,0,0}

\setlength{\textheight}{22.25cm} \setlength{\textwidth}{17.25cm} \setlength{\oddsidemargin}{-20pt} \setlength{\evensidemargin}{-20pt}
\setlength{\topmargin}{-41pt}

\setlength{\unitlength}{1cm} \setlength{\parskip}{3pt} \setlength{\parindent}{0pt}

 0 \DeclareMathAlphabet{\mathbfit}{OT1}{cmr}{bx}{it}

\newcommand{\bea}{\begin{eqnarray*}}
\newcommand{\eea}{\end{eqnarray*}}
\newcommand{\be}{\begin{eqnarray}}
\newcommand{\ee}{\end{eqnarray}}
\newcommand{\beq}{\begin{equation}}
\newcommand{\eeq}{\end{equation}}

\newcommand{\zd}{\mathbb{Z}^d}

\newcommand{\confs}{\tilde{\Omega}}
\newcommand{\Q}{\mathcal{Q}}
\newcommand{\Zl}{Z_{\lambda}}
\newcommand{\Wl}{W_{\lambda}}
\renewcommand{\L}{\mathcal{L}}
\renewcommand{\P}{\mathbb{P}}
\newcommand{\E}{\mathbb{E}}
\newcommand{\N}{\mathbb{N}}

\newtheoremstyle{theorem}
                {10pt}
                {0pt}
                {}
                {}
                {\bfseries}
                {}
                {\newline}
                {}
\theoremstyle{theorem}
\newtheorem{thm}{Theorem}
\newtheorem{cor}[thm]{Corollary}
\newtheorem{rmk}[thm]{Remark}
\newtheorem{lem}[thm]{Lemma}

\newtheorem{defi}[thm]{Definition}


\begin{document}

\title{The speed of biased random walks among dynamical random conductances} 

\author{Eszter Couillard \thanks{Department of Mathematics, Technical University of Munich, \texttt{eszter.couillard@tum.de}}}
\date{}
\maketitle

\begin{abstract}
    We study biased variable-speed random walks in dynamical random conductances. Assuming that the conductances are upper-bounded, we prove that the walk has strictly positive speed for every bias $\lambda>0$. We then give an explicit asymptotic formula for the speed for $\lambda \to + \infty$, and prove two monotonicity properties for the speed. Finally, we provide an example showing that, even for conductances that are bounded and bounded away from zero, the speed can be asymptotically decreasing in the bias.
\end{abstract}

\section{Introduction}
In this paper, we discuss properties of the speed of variable speed biased random walks on dynamical conductances on $\mathbb{Z}^d$. Dynamical conductances $(\omega_t)_{t \geq 0}$ on $\mathbb{Z}^d$ assign to each time $t \geq 0$ and every edge $e \in E_d$ of $\mathbb{Z}^d$ a non-negative number $\omega_t(e)$. The distribution of $(\omega_t)_{t \geq 0}$ depends on two parameters: a measure $q$ on $[0,\infty)$ and $\mu > 0$. The process $(\omega_t)_{t \geq 0}$ is a Markov process with $\omega_0$ distributed according to $Q = q^{E_d}$, and the edges update their conductances independently at the points of a Poisson process with rate $\mu \in (0,\infty)$ and according to the measure $q$.\\

We study biased random walks in continuous time on $(\omega_t)_{t \geq 0}$ such that the time spent at a site depends on the conductances on the edges. We call these processes variable speed biased random walk (VBRW) (see Definition in Subsection \ref{DefVBRW}) and normalized variable speed biased random walk (NVBRW) (see Definition \ref{defNVBRW}). At jump times the random walker jumps according to probabilities that are proportional to $(\omega^{\lambda}_t)_{t \geq 0}$, where $\omega^{\lambda}_t(x,y) = \omega_t(\{x,y\}) e^{\lambda e \cdot e_1}$ for $x,y \in \mathbb{Z}^d$ with $x \sim y$, for some $\lambda > 0$ called the bias. Similar processes have been studied on dynamical percolation, unbiased random walks have be studied for instance in \cite{unbiasedPercolation}, and the biased case has been studied in \cite{percolation}.\\

For the (N-)VBRW, under the assumption that $q$ has a bounded support, we show the positivity of the speed (Theorem \ref{positiveSpeed}), then provide an asymptotic expansion in $\lambda$ for the speed (Theorem \ref{assymptotics}, Corollary \ref{corAssymtoticNVBRW}), and finally study the monotonicity of the speed as a function of $\lambda$ (Theorem \ref{Monotonmularge}, Theorem \ref{assymptoticMonotonicity}). The results obtained for the NVBRW can be seen as a generalization of those shown in \cite{percolation} for dynamical environments, with upper bounded conductances instead of percolation. \bigskip\\

The proof of the positivity of the speed requires two results: the CLT under the annealed probability measure and a relation between the probability to be at a point $x$ and the probability to be at point $-x$ at fixed time $t$. The relation between these two probabilities is obtained using a reversibility argument.\\
Our results show that an asymptotically decreasing speed does not only occur on dynamical percolation as shown in \cite{percolation}, but can also happen for measures $q$ that are uniformly elliptic. An explicit example is given.

\section{Definition of the model}
\subsection{Variable speed biased random walk (VBRW) in dynamical conductances} \label{DefVBRW}
Let $d\geq 1$. For $x,y \in \zd$ with $x\sim y$ and $t\geq0$ we denote by $\omega_t(x,y) = \omega_t(y,x)$ the non-negative conductance of the edge between $x$ and $y$ at time $t$. 
We may sometimes write $\omega_t(e)$ for $e \in E_d$ with $E_d$ the set of edges in $\zd$ and $\omega_t$ for the function associating the conductances to each edge at time $t\geq 0$.\\ 
Let $\kappa > 0$ and $q$ be a probability measure on $[0, \kappa]$, we assume $q \neq \delta_0$. For all $e \in E_d$ the conductances $\omega_0(e)$ are i.i.d. with law $q$, so $\omega_0$ is distributed according to the product measure $\Q:= q^{E_d}$. We will denote by $\confs$ the set $[0,\kappa]^{E_d}$.\\ 
Each edge updates its conductance independently of the other edges at the points of Poisson process with rate $\mu \in (0,\infty)$ and according to the measure $q$. The process $(\omega_t)_{t\geq 0}$ is then a Markov process with the product measure $\Q$ as its stationary distribution.\bigskip \\
We now want to define a biased random walk in the dynamical environment given by $(\omega_t)_{t \geq 0}$. To do so we first define the rates $\omega_t^{\lambda}$ for a bias $\lambda \geq 0$ and $t \geq 0$ in the following way 
\[
    \omega_t^{\lambda}(x,y) = e^{\lambda (y-x) \cdot e_1} \omega_t(x,y) \text{ for all } x,y\in \zd \text{ with } x \sim y
\]
with $e_1,...,e_d$ denoting the $d$ vectors of the standard orthonormal basis on $\zd$. Note that for $\lambda > 0 $ these rates are not symmetric.\\
We now construct $(X_t)_{t\geq 0}$, the variable speed biased random walker on $(\omega_t)_{t\geq 0}$. Let $\Wl(x,t)$ be the total jump rate at point $x \in \zd$ at time $t$

\[
\Wl(x,t):= \sum_{y \sim x} \omega_t^{\lambda}(x,y),
\]
then take $\mathcal{P}$ to be a Poisson process on with intensity function $\Wl(X_t,t)$. Then for $t \in \mathcal{P}$ the random walker $X$ jumps to a neighboring site $y$ with probability $\frac{\omega_t^{\lambda}(X_{t^-},y)}{\Wl(X_{t^-},t)}$. If not stated otherwise we will assume that $X_0 = 0$ a.s.

We will use $P^{\lambda}_{\omega}$ and $E^{\lambda}_\omega$ to denote the probability measure and the expectation corresponding to the random walk on the environment $\omega = (\omega_t)_{t\geq 0}$. We will write $\P^{\lambda}$ for the distribution of the joint process and $\E^{\lambda}$ for the corresponding expectation. If it is clear from the context what $\lambda$ is, we will omit it.\\
Note that the process $(\omega_t, X_t)_{t \geq 0}$ for $(X_t)_{t\geq 0}$ a biased random walk on $(\omega_t)_{t \geq 0}$ is a Markov process under $\P$. However, the process $(X_t)_{t\geq 0}$ alone is not a Markov process under $\P$.\bigskip\\
Let $(Z_t,\omega_t)_{t \geq 0}$ be a random walk on dynamical conductances.
If $\lim_{t \rightarrow \infty} \frac{X_t \cdot e_1}{t}$ exists $\P$-a.s. then we call it the (linear) speed of the random walker $(X_t)_{t\geq 0}$.\\
We will denote by $v(\lambda,\mu)$ the speed of a VBRW with parameters $\lambda, \mu>0$, in Lemma \ref{regenerativSpeed} we will show that this speed exists. Using the same arguments as for Lemma \ref{regenerativSpeed} and symmetry, we get that $\lim_{t \rightarrow \infty} \frac{X_t \cdot e_i}{t} = 0$ $\P$-a.s. for $i \in \{2,...,d\}$, so we are only interested in the speed in direction $e_1$.\\

\subsection{An alternative construction of the VBRW} \label{alternativeRep}
The goal of this section is to give an alternative description of VBRW on dynamical conductances.\\ 
Let $(\omega_t)_{t \geq 0}$ be dynamical conductances on $\zd$ with refreshing rate $\mu >0$ and measure $q$ on $[0,\kappa]$, and let $\lambda >0$. Let $\mathcal{P}$ be a Poisson point process with rate $Z_{\lambda} \kappa$ where $\Zl = 2d-2+ e^{\lambda} + e^{-\lambda}$. Then we construct the process $(X_t)_{t\geq 0}$ such that $X_0 = 0 $ and for $t \in \mathcal{P}$ we sample independently two independent random variables $U_t \sim Unif[0,Z_{\lambda}]$ and $V_t\sim Unif[0,\kappa]$, then 
\begin{enumerate}
    \item if $U_t \in [i-2,i-1)$, then $X$ attempts at time $t$ a jump in direction $e_i$ for $i \in \{2,...,d\}$
    \item if $U_t \in [d+i-3,d+i-2)$, then $X$ attempts at time $t$ a jump in direction $-e_i$ for $i \in \{2,...,d\}$
    \item if $U_t \in [2d-2,2d-2+e^{\lambda})$, then $X$ attempts at time $t$ a jump in direction $e_1$
    \item if $U_t \in [2d-2+e^{\lambda}, Z_{\lambda}]$, then $X$ attempts at time $t$ a jump in direction $-e_1$.
\end{enumerate}
Now let $e$ be the direction in which $X$ is attempting a jump at time $t$.
\begin{enumerate}
    \item If $V_t \in [0,\omega_t(X_{t_-},X_{t_-}+e)]$ then $X_t = X_{t_-}+e$ and the jump succeeds,
    \item if $V_t \in [\omega_t(X_{t_-},X_{t_-}+e), \kappa]$ then $X_t = X_{t_-}$ and the jump fails.
\end{enumerate}

\section{Regeneration structure}
\subsection{Regeneration times VBRW}
The regeneration times we use are adapted from \cite{percolation}. To adapt these regeneration times to VBRW we use the representation in section \ref{alternativeRep}. Note that in contrast to \cite{percolation} the distribution of the regeneration times depend on $\lambda$.\bigskip \\
Let $\mathcal{P}$ be the Poisson process according to which the random walker attempts jumps.
Fix an enumeration $(e_i)$ of the edges in $E_d$ according to an arbitrary rule. Then for each edge $e_i$ we create an infinite number of copies $e_{i,1},e_{i,2},e_{i,3},...$ . We now define a process $(I_t)_{t \geq 0}$, and we call $I_t$ the infected set. Let $I_0 = \emptyset$. Suppose that for some $t \in \mathcal{P} $ the random walk $(X_t)_{t\geq 0}$ attempts a jump along the edge $e_i$, then we add $e_{i,j}$ to $I_t$ for the smallest $j$ such that $e_{i,j} \notin I_{t_-}$.\bigskip \\
Next, for all $t \geq 0$ we assign the lexicographical ordering $\leq_L$ to the edges in $|I_t|$ using the ordering of the edges of $E_d$. Further, let $(N_t)_{t \geq 0}$ be a Poisson process with time dependent intensity $\mu|I_t|$. Whenever a clock of this process rings at time t, we choose an index uniformly at random from $1,...,|I_t|$ and remove the copy of the edge with this index in $|I_t|$ according to the ordering $\leq_L$. Moreover, if the picked copy is of the form $e_{i,1}$ for some $i$ then we refresh the state of the edge $e_i$ in the environment $\omega_t$, i.e. we give $\omega_t(e_i)$ an independent new value according to $q$.\\
For all edges $e_j$ such that $e_{j,1} \notin I_t$, we use independent rate $\mu$ Poisson clocks to determine when the state of the edge in $(\omega_t)_{t \geq 0}$ is refreshed. Note that with this construction $(X_t,\omega_t)_{t \geq 0}$ has indeed the correct transition rates.\\
Let $\tau_0 = 0$, and we define for $n \geq 1$ \[\tau_n := \inf\{t > \tau_{n-1}: \; I_t = \emptyset \text{ and } I_{t'} \neq \emptyset \text{ for some } t' \in (\tau_{n-1},t) \}.\]
Then the times $(\tau_i)_{i \in \N}$ are regeneration times for the process $(X_t)_{t \geq 0}$: this means that $(\tau_{i+1}- \tau_i)_{i \in \N}$ are i.i.d. and $(X_{\tau_{i+1}} - X_{\tau_i})_{i \in \N}$ are i.i.d.
In the following to lighten the notation we will write $\tau$ for $\tau_1$.

\subsection{Properties of the regeneration times}
Our goal is to show that the regeneration times we have defined have exponential moments and that the speed exists.
\begin{lem}\label{deathbirthchain}
    Let $\alpha,\mu >0$.\\
    Let $(A_t)_{t \geq 0 }$ be a continuous time Markov chain on $\mathbb{N}$ with the following $Q$ matrix for $i \geq 0$
    \begin{eqnarray*} 
        q(i,i+1) = \alpha,\\
        q(i+1,i) = (i+1) \mu,\\
        q(i,i) = -\alpha - i \mu.
    \end{eqnarray*}
    Assume $A_0 = 0$ a.s. then let $\tau = \inf\{t \geq 0: A_t = 0 \text{ and } \exists 0 \leq s \leq t: \; A_s \neq 0\}$ then $\tau$ has an exponential tail.\\
    Further, for $T = \inf\{n \geq 1: A_n = 0\}$ where $(A_n)_{n \geq 0}$ is the discretization of $(A_t)_{t \geq 0 }$, we have that $T$ has an exponential tail.
\end{lem}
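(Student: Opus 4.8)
The plan is to analyze the Markov chain $(A_t)_{t\ge0}$ by comparing it with a simpler birth-death chain and then use a Lyapunov/drift argument. First I would observe that the total "infection count" $A_t$ increases by $L$ at constant rate $\alpha$ and decreases by $1$ at rate $i\mu$ when $A_t=i$. The essential feature is that the downward rate grows \emph{linearly} in the state, so the chain is strongly mean-reverting: once $A_t$ exceeds roughly $L\alpha/\mu$, the drift is strongly negative. I would make this precise by choosing a Lyapunov function of the form $f(i)=e^{\theta i}$ for a small $\theta>0$ and checking that the generator satisfies $\mathcal{L}f(i) = \alpha(e^{\theta L}-1)f(i) + i\mu(e^{-\theta}-1)f(i) \le -c f(i) + C\mathbf{1}_{\{i\le K\}}$ for suitable constants; the linear growth of $i\mu$ guarantees this holds for all large $i$ once $\theta$ is fixed. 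Standard results on exponential ergodicity / return times for such geometric-drift chains (e.g.\ via Foster-Lyapunov criteria, or Meyn-Tweedie-type arguments) then give that the return time to $0$, and hence $\tau$, has an exponential tail.

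Alternatively, and perhaps more elementarily, I would give a direct stochastic-domination argument. Since the jump-up rate is bounded by $\alpha$ and the jump-down rate from state $i$ is at least $\mu$ (for $i\ge1$), the excursion of $A_t$ away from $0$ is dominated by an excursion of an $M/M/\infty$-type queue, or even more crudely: during any unit of "local time at positive states," the number of up-jumps before the chain returns to $0$ is stochastically dominated by a geometric random variable (each down-step competes against an up-step, and when the state is large the down-step wins with overwhelming probability). Concretely, I would bound $\tau$ by the hitting time of $0$ of a chain that ignores the linear speed-up in the death rate and uses only the rate-$\mu$ lower bound when at level $1$; coupling shows $A$ returns to $0$ at least as fast. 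One then shows the embedded number of steps $N$ until return has a geometric-type tail (using that from a high level the drift is negative, so the maximum reached is tight), and that the holding times are stochastically bounded, yielding the exponential tail for $\tau$ by a standard sum-of-exponentials estimate.

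For the discrete statement about $T$, I would note that $(A_n)_{n\ge0}$ is the jump chain, whose transition probabilities from state $i\ge1$ are $\alpha/(\alpha+i\mu)$ up by $L$ and $i\mu/(\alpha+i\mu)$ down by $1$. The key point is again that for $i$ large the down-probability is close to $1$, so the chain is dominated by a random walk with a strong negative drift once above a threshold $K=\lceil \alpha/\mu\rceil$; below $K$ the number of steps to reach $0$ is controlled by a geometric number of "attempts," each attempt having bounded expected duration. A sub/supermartingale argument with $g(i)=\rho^{i}$ for suitable $\rho>1$, chosen so that $\mathbb{E}[g(A_{n+1})\mid A_n=i]\le g(i)$ for all $i\ge K$, shows the chain started from any level has geometrically small probability of climbing high before hitting $\{0,\dots,K-1\}$; combined with the easy bound on excursions below $K$, this gives an exponential tail for $T$.

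The main obstacle is handling the unbounded (state-dependent) down rate cleanly: one cannot simply dominate $(A_t)$ by a fixed birth-death chain with constant rates in both directions, because the crucial mean-reversion comes precisely from the linear term $i\mu$. The cleanest route is probably the Lyapunov-function computation with $f(i)=e^{\theta i}$, where one must check that $\theta$ can be chosen small enough that $e^{\theta L}-1$ is dominated by the negative contribution for all $i$ above a threshold — this is where the argument really uses $L<\infty$ and $\mu>0$ — and then invoke a standard theorem converting a geometric drift condition into an exponential tail for the return time. I would also take care that the continuous-time holding times, which are $\mathrm{Exp}(\alpha+i\mu)$ and hence \emph{short} at high levels, do not spoil the bound; since they are stochastically smaller than $\mathrm{Exp}(\alpha)$ uniformly, summing over an exponentially-tailed number of steps still gives an exponentially-tailed total time.
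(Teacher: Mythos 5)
Your plan is correct, but your primary route is genuinely different from the paper's. The paper argues by hand: it reduces to the embedded jump chain (using, as you do, that the holding times are stochastically dominated by $\mathrm{Exp}(\alpha)$ since the total rate is at least $\alpha$), fixes a level $x>\alpha L(1+\mu^{-1})$, cuts the excursion from $0$ at the successive visits to $x$, dominates the portion of the trajectory above $x$ by a random walk with steps $+L$/$-1$ and negative drift (whose hitting time gets an exponential tail from a Chernoff bound, Lemma \ref{LemmaHittingTimeRW}), treats the portions below $x$ as hitting times of a finite irreducible chain, and controls the number of visits to $x$ by a geometric variable, since from $x$ the chain descends straight to $0$ with probability $p=\prod_{k=1}^{x}\frac{\mu k}{\mu k+\alpha}>0$; exponential moments are then assembled via $\E[e^{\varepsilon(T^x_1-T^x_0)}](1-p)<1$ for small $\varepsilon$. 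Your main proposal instead verifies the geometric drift condition $\mathcal{L}f(i)=\bigl(\alpha(e^{\theta L}-1)+i\mu(e^{-\theta}-1)\bigr)f(i)\le -cf(i)+C\mathbf{1}_{\{i\le K\}}$ for $f(i)=e^{\theta i}$ and invokes Foster--Lyapunov/Meyn--Tweedie theory; this is cleaner and avoids the excursion bookkeeping, but it buys that economy by importing nontrivial external theorems, and you should make explicit the (routine) passage from exponential moments of the return time to the sublevel set $\{0,\dots,K\}$ to the return time to the singleton $\{0\}$ --- e.g.\ via the same uniform probability of descending directly to $0$ from any state $\le K$ that the paper uses, with a geometric-number-of-attempts argument. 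Your ``more elementary'' alternative (threshold $K$ with $K\mu>\alpha L$, domination above $K$ by a fixed negative-drift walk, geometric attempts below $K$) is in substance the paper's proof; note only that the cruder variant you float mid-paragraph --- replacing the death rate by its constant lower bound $\mu$ --- indeed fails whenever $\alpha L>\mu$ (the dominating chain is then transient), a defect you yourself identify, so the threshold/Lyapunov versions are the ones to carry out. Your handling of the continuous-time statement, summing an exponentially-tailed number of holding times each dominated by $\mathrm{Exp}(\alpha)$, matches the paper's reduction and is correct.
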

The proof of this lemma can be found in the appendix.

\begin{lem}[\textit{Exponential tail of the regeneration time}]\label{finiteTau}Let $\mathcal{P}$ be the Poisson process of rate $\kappa \Zl$ according to which the VBRW attempts jumps.\\
    Let $\tau$ be the first regeneration time for a VBRW and $N(\tau) = |\{t\in \mathcal{P}: t \leq \tau \}|$, then 
    \begin{itemize}
        \item $\tau$ has an exponential tail and \(\E[\tau] = \frac{1}{\kappa \Zl} e^{\frac{\kappa \Zl}{\mu}},\)
        \item $N(\tau)$ has an exponential tail and $\E[N(\tau)] = e^{\kappa Z_{\lambda} / \mu} $.
    \end{itemize}
\end{lem}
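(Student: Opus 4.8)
The plan is to recognize the size of the infected set $(|I_t|)_{t\ge 0}$ as a particular instance of the chain in Lemma~\ref{deathbirthchain} and then to read off the two means from its explicit stationary law. By the construction of the regeneration times for the VBRW via the representation of Subsection~\ref{alternativeRep}, every point of the attempt process $\mathcal P$ --- a homogeneous Poisson process of rate $\kappa\Zl$ --- adds exactly one edge-copy to $I_t$, while copies are deleted at total rate $\mu|I_t|$. Hence $(|I_t|)_{t\ge0}$ is precisely the continuous-time Markov chain of Lemma~\ref{deathbirthchain} with $\alpha=\kappa\Zl$ and $L=1$. That lemma immediately gives that $\tau$ has an exponential tail (so $\E[\tau]<\infty$) and that the return time $T$ of the discretized chain has an exponential tail; since every point of $\mathcal P$ before $\tau$ produces an up-step of $(|I_t|)$, we have $N(\tau)\le T$, so $N(\tau)$ also has an exponential tail. (This is the same argument used for the CBRW in Lemma~\ref{exp_tau_finite}.)

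It remains to identify the two expectations. To compute $\E[\tau]$, observe that $(|I_t|)_{t\ge0}$ is the queue-length process of an $M/M/\infty$ queue: it is reversible, and detailed balance $\pi_i\,\kappa\Zl=\pi_{i+1}(i+1)\mu$ forces its unique stationary distribution to be Poisson with parameter $\rho:=\kappa\Zl/\mu$, so that $\pi_0=e^{-\rho}$. Since $\E[\tau]<\infty$ the chain is positive recurrent, and the regeneration times $0=\tau_0<\tau_1<\cdots$ partition the time axis into i.i.d.\ cycles; inside each cycle the strong Markov property shows that the chain sits in state $0$ only during the initial holding time, which is $\mathrm{Exp}(\kappa\Zl)$ and hence has mean $1/(\kappa\Zl)$. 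Comparing the almost sure limit of the occupation fraction of state $0$ up to time $t$, which equals $\pi_0$ by the ergodic theorem, with its law-of-large-numbers value over the i.i.d.\ cycles, which equals $\bigl(1/(\kappa\Zl)\bigr)/\E[\tau]$, yields $\E[\tau]=\frac{1}{\kappa\Zl}\,e^{\kappa\Zl/\mu}$.

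Finally, $\E[N(\tau)]$ follows by Wald's identity for the Poisson process $\mathcal P$: the process $t\mapsto N(t)-\kappa\Zl\,t$ is a martingale for the natural filtration of the whole construction (the increments of $\mathcal P$ after time $t$ being independent of the past), $\tau$ is a stopping time for this filtration with $\E[\tau]<\infty$, and $N(\tau)$ is integrable by the first step, so optional stopping --- justified by dominating $N(\tau\wedge t)$ by $N(\tau)$ --- gives $\E[N(\tau)]=\kappa\Zl\,\E[\tau]=e^{\kappa\Zl/\mu}$. The only delicate points are the two limiting statements used for $\E[\tau]$ --- positive recurrence of the $M/M/\infty$ chain with the claimed stationary law, and applicability of the ergodic theorem to the occupation time of state $0$ --- together with the uniform-integrability check in the optional stopping theorem; all of these are secured by the exponential-tail bounds obtained in the first step.
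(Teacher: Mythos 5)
Your proposal is correct, and the first part (exponential tails via Lemma \ref{deathbirthchain} with $\alpha=\kappa\Zl$, $L=1$, and $N(\tau)$ dominated by the return time of the discretized chain) is exactly what the paper does. Where you diverge is in the computation of the two expectations. The paper identifies $(|I_t|)_{t\ge0}$ as a birth--death process and simply quotes closed-form expressions for mean return times: it cites a formula to get $\E[\tau\,|\,|I_0|=1]=\sum_{i\ge1}\frac{(\kappa\Zl)^{i-1}}{\mu^i i!}$, hence $\E[\tau]=\frac{1}{\kappa\Zl}e^{\kappa\Zl/\mu}$, and for $N(\tau)$ it computes the expected return time $\E[T_0]=2e^{\kappa\Zl/\mu}$ of the embedded discrete birth--death chain and uses the combinatorial identity $T_0=2N(\tau)$ (up-steps equal down-steps over a cycle). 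You instead exploit the $M/M/\infty$ structure: detailed balance gives the Poisson$(\kappa\Zl/\mu)$ stationary law, and the occupation-time/renewal-reward identity for the (positive recurrent, by the tail bound) chain yields $\pi_0=\frac{1/(\kappa\Zl)}{\E[\tau]}$, hence $\E[\tau]=\frac{1}{\kappa\Zl}e^{\kappa\Zl/\mu}$; then $\E[N(\tau)]=\kappa\Zl\,\E[\tau]$ follows from optional stopping applied to the compensated Poisson martingale $N(t)-\kappa\Zl t$ at $\tau\wedge t$, with monotone convergence to remove the truncation. Both routes are sound; the paper's is shorter because it leans on tabulated birth--death formulas, while yours is more self-contained (no external hitting-time formulas, and Wald's identity replaces the $T_0=2N(\tau)$ bookkeeping) at the cost of having to justify positive recurrence and the ergodic occupation-time identity, which your exponential-tail estimate indeed supplies.
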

\begin{proof}
    The existence of an exponential moment for $\tau$ and $N(\tau)$ follows from Lemma \ref{deathbirthchain} with $\alpha = \kappa \Zl$.
    We now want to compute $\E[\tau]$ and $\E[N(\tau)]$\bigskip \\
    First note that $(|I_t|)_{t\geq 0}$ is birth-death process with birth rate $\lambda_i = \kappa \Zl$ and death rate $\mu_i = \mu i$. Using Theorem 7.1 in Chapter 4 of \cite{BirthDeath} we have that 
    \[\E[\tau | |I_0| = 1] = \sum_{i = 1}^{\infty}\frac{(\kappa \Zl)^{i-1}}{\mu^i i!} = \frac{1}{\kappa \Zl} (e^{\frac{\kappa \Zl}{\mu}}-1)\]
    So we get \[\E[\tau | |I_0| = 0] = \frac{1}{\kappa \Zl} e^{\frac{\kappa \Zl}{\mu}}\]
    Next, let $(X_n)_{n\geq 0}$ be the discretization of $|I_t|_{t \geq 0}$. Then $(X_n)_{n\geq 0}$ is a discrete birth-death chain. Let $T_0 = \inf\{n \geq 1: X_n = 0\}$ the first return time to $0$, using the formula for the absorption time of a birth-death chain in Chapter 4 of \cite{BirthDeathChain} we get\\
    \begin{eqnarray*}
        \E[T_0] &=& 1 + \sum_{l = 1}^{\infty} \frac{(\kappa \Zl) ^l }{(\kappa \Zl)(\mu + (\kappa \Zl))\dots((l-1)\mu+(\kappa \Zl))}\frac{(\kappa \Zl)(\mu + (\kappa \Zl))\dots(l\mu+(\kappa \Zl))}{\mu^l l!}\\
        &=&1 + \sum_{l = 1}^{\infty} \left(\frac{\kappa \Zl}{\mu} \right)^l \frac{1}{l!}\frac{(l\mu+(\kappa \Zl))}{\kappa \Zl}\\
        &=&\sum_{l = 0}^{\infty} \left(\frac{\kappa \Zl}{\mu} \right)^l \frac{1}{l!}\frac{(l\mu+(\kappa \Zl))}{\kappa \Zl}\\
        &=&\sum_{l = 1}^{\infty} \left(\frac{\kappa \Zl}{\mu} \right)^{l-1} \frac{1}{(l-1)!} +\sum_{l = 0}^{\infty} \left(\frac{\kappa \Zl}{\mu} \right)^l \frac{1}{l!} \\
        &=& 2 e^{\frac{\kappa \Zl}{\mu}}.
    \end{eqnarray*}
    But we have $T_0 = 2 N(\tau)$,
    so $\E[N(\tau)] = e^{\kappa Z_{\lambda} / \mu} $.
\end{proof}

\begin{lem}[\textit{Existence of the speed}]\label{regenerativSpeed}
    Let $\lambda\geq 0$, $\mu >0$ and let $(X_t)_{t\geq 0}$ be a VBRW on dynamical conductances, then $v(\lambda,\mu) = \lim_{t \rightarrow \infty} \frac{X_t\cdot e_1}{t}$ exists $\P$-a.s. and 
    \[v(\lambda,\mu) = \frac{\E^{\lambda}[X_{\tau}\cdot e_1]}{\E^{\lambda}[\tau]} \quad \P^{\lambda}\text{-a.s.}\]
    Further, 
    \begin{equation*}
        v(\lambda,\mu) = \lim_{t\rightarrow \infty} \frac{\E^{\lambda}[X_t \cdot e_1]}{t}
    \end{equation*}
\end{lem}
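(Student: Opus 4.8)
The plan is to use the regeneration structure together with the strong law of large numbers, in the standard way for random walks in random environment with a good renewal structure. First I would recall from the construction of the regeneration times $(\tau_n)_{n \geq 0}$ (with $\tau_0 = 0$, $\tau = \tau_1$) that the increments $(\tau_{n+1} - \tau_n)_{n \geq 1}$ are i.i.d.\ and the spatial increments $(X_{\tau_{n+1}} - X_{\tau_n})_{n \geq 1}$ are i.i.d., and that these two sequences are actually jointly i.i.d.\ as pairs $\bigl((\tau_{n+1}-\tau_n),\,(X_{\tau_{n+1}}-X_{\tau_n})\bigr)_{n \geq 1}$. By Lemma \ref{finiteTau} we have $\E^{\lambda}[\tau] = \frac{1}{\kappa \Zl} e^{\kappa \Zl / \mu} < \infty$, and in particular $\E^{\lambda}[\tau_{n+1}-\tau_n] < \infty$ for $n \geq 1$ (the law of $\tau_1$ may differ slightly from that of later increments, as usual, but it is also a.s.\ finite). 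Applying Kolmogorov's strong law of large numbers to the i.i.d.\ sequence of time increments gives $\tau_n / n \to \E^{\lambda}[\tau_2 - \tau_1]$ $\P^{\lambda}$-a.s., so in particular $\tau_n \to \infty$ a.s.

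Next I would control the spatial increments. Since $|X_{\tau_{n+1}} - X_{\tau_n}| \leq N(\tau_{n+1}) - N(\tau_n)$, the number of jump attempts in the regeneration block, and $N(\tau)$ has an exponential tail by Lemma \ref{finiteTau}, each coordinate of $X_{\tau_{n+1}} - X_{\tau_n}$ is integrable. Kolmogorov's SLLN then gives $X_{\tau_n} \cdot e_1 / n \to \E^{\lambda}[(X_{\tau_2} - X_{\tau_1})\cdot e_1]$ $\P^{\lambda}$-a.s. Dividing, $\frac{X_{\tau_n}\cdot e_1}{\tau_n} = \frac{X_{\tau_n}\cdot e_1 / n}{\tau_n / n} \to \frac{\E^{\lambda}[(X_{\tau_2}-X_{\tau_1})\cdot e_1]}{\E^{\lambda}[\tau_2 - \tau_1]}$ along the subsequence $(\tau_n)$. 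A short computation identifies this ratio with $\frac{\E^{\lambda}[X_\tau \cdot e_1]}{\E^{\lambda}[\tau]}$: writing $X_{\tau_n}\cdot e_1 = \sum_{k=1}^{n}(X_{\tau_k} - X_{\tau_{k-1}})\cdot e_1$ and $\tau_n = \sum_{k=1}^n (\tau_k - \tau_{k-1})$, the first-block term is negligible in the Cesàro limit, so the limiting ratio equals $\E^{\lambda}[(X_\tau - X_0)\cdot e_1]/\E^{\lambda}[\tau] = \E^{\lambda}[X_\tau \cdot e_1]/\E^{\lambda}[\tau]$ since $X_0 = 0$.

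Finally I would interpolate between regeneration times to upgrade convergence along $(\tau_n)$ to convergence of $X_t \cdot e_1 / t$ as $t \to \infty$. For $t \in [\tau_n, \tau_{n+1})$ one has
\[
\frac{X_{\tau_n}\cdot e_1}{\tau_{n+1}} \;\leq\; \frac{X_t \cdot e_1}{t} \;\leq\; \frac{X_{\tau_n}\cdot e_1}{\tau_n} + \frac{|X_t - X_{\tau_n}|}{\tau_n},
\]
up to signs; the error term $|X_t - X_{\tau_n}| \leq N(\tau_{n+1}) - N(\tau_n)$ satisfies $(N(\tau_{n+1}) - N(\tau_n))/n \to 0$ a.s.\ (since it is an i.i.d.\ sequence with finite mean, the individual terms divided by $n$ tend to $0$ by Borel--Cantelli using the exponential tail), and $\tau_n \to \infty$, so the error is $o(1)$. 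Together with $\tau_{n+1}/\tau_n \to 1$, both bounds converge to the same limit, giving $\lim_{t\to\infty} X_t \cdot e_1 / t = \E^{\lambda}[X_\tau\cdot e_1]/\E^{\lambda}[\tau]$ $\P^{\lambda}$-a.s.

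The main obstacle, such as it is, is bookkeeping around the first regeneration block: its law is generally not identical to that of the later blocks, so one must be slightly careful to state the SLLN for the stationary sequence $n \geq 2$ and check that the single anomalous block contributes nothing to the limit. All the required integrability ($\E^{\lambda}[\tau] < \infty$, exponential tail of $N(\tau)$) is already supplied by Lemma \ref{finiteTau}, so no new estimates are needed.
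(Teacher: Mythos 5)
Your proposal is correct and is essentially the paper's approach: the paper simply imports the proof of Proposition 3.1 of \cite{percolation} (with the constant now depending on $\lambda$), and that proof is exactly the regeneration/strong-law argument you spell out, with the integrability supplied by Lemma \ref{finiteTau}. One small clarification: the replacement of $\E^{\lambda}[(X_{\tau_2}-X_{\tau_1})\cdot e_1]/\E^{\lambda}[\tau_2-\tau_1]$ by $\E^{\lambda}[X_{\tau}\cdot e_1]/\E^{\lambda}[\tau]$ is not a consequence of the first block being negligible, but of the fact that in this model the first block is genuinely identically distributed with the later ones (at time $0$ the environment is stationary, the walker is at the origin and the infected set is empty, which is precisely the state at each $\tau_i$), as asserted in the paper's regeneration structure.
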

\begin{proof} Using Lemma \ref{finiteTau}, we can take over the proof of Proposition 3.1 in \cite{percolation}, if we first fix $\lambda\geq 0$, and then we replace the constant $C$ by a constant $C_{\lambda}$ that depends on $\lambda$.
\end{proof}

\section{VBRW on dynamical conductances has a positive speed}
\subsection{Reversibility}
Our goal is to prove that the random walker $(X_t)_{t \geq 0}$ satisfies an equation that is similar to a detailed balance equation \eqref{eq:detailedBalance} (note that we do not directly talk about reversibility  of $(X_t)_{t \geq 0}$ as it is not a Markov process under $\P$, so we will look at the reversibility of $(\omega_t,X_t)_{t \geq 0}$).\\
In the following $(\omega_t,X_t)_{t \geq 0}$ will denote a VBRW on dynamical conductances $(\omega_t)_{t \geq 0}$. 
We start by define some useful notation. For $\tilde{e}\in E_{d}$ \[\omega_{e \leftarrow v}(\tilde{e}) = v\mathds{1}_{e}(\tilde{e}) + (1-\mathds{1}_{e}(\tilde{e})) \omega(\tilde{e}),\] 
we will write $\mathcal{D}$ for the following set of functions
\[\mathcal{D} = \{f \in C_0(\tilde{\Omega} \times \zd) | \exists E \subset \E_d \text{ with } |E| < \infty \text{ and } \forall \omega \in \confs, \; \forall v \in [0,\kappa], \; \forall z \in \zd, \; f(\omega,z) = f(\omega_{e\leftarrow v})\}.\]
Note that using the Stone-Weierstrass theorem we have that $\mathcal{D}$ is dense in $C_0(\tilde{\Omega} \times \zd)$. 
We can define $\mathcal{L}f$ for $f \in \mathcal{D}$, the generator of $(\omega_t,X_t)_{t \geq 0}$, in the following way
\begin{eqnarray*}
    \mathcal{L}f(\omega,x) = \sum_{y \sim x} \omega_t^{\lambda}([x],[y]) \left( f(\omega,y) - f(\omega,x)\right) + \mu \sum_{e \in E_{d}} \left[ \int_{0}^{\kappa} f(\omega_{e \leftarrow v}, x)dq(v) - f(\omega, x) \right].
\end{eqnarray*}
Let $\pi$ be the following measure on $\zd$:
\[\pi(x) = e^{\lambda ( 2 x \cdot e_1)} \quad \forall x \in \zd.\]

\begin{lem}
    $(\omega_t, X_t)_{t \geq 0}$ is a reversible Markov process. 
    Its reversible measure is given by 
    \[
        \rho = \Q \otimes \pi,
    \]
    with $Q$ as define before the product measure of $q$ on the edges.
\end{lem}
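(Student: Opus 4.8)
The plan is to verify the detailed balance equations for the joint Markov process $(\omega_t^M, X_t^M)_{t\geq 0}$ directly against the candidate reversible measure $\rho_M = \Q_M \otimes \pi$, using the generator $\mathcal L_M$ given in the preceding remark. Since $(\omega_t^M, X_t^M)_{t\geq 0}$ is a Markov process on the state space $\confs_M \times \Tdn$ whose generator splits into a "walk part" (the sum over neighbours $y \sim x$) and an "environment refresh part" (the sum over edges $e \in E_{d,M}$), it suffices to check reversibility for each part separately, since a sum of reversible generators with respect to the same measure is reversible. First I would treat the refresh part: for fixed $x$, the operator $\mu\sum_{e}\big[\int f(\omega_{e\leftarrow v},x)\,dq(v) - f(\omega,x)\big]$ is the generator of independent resampling of each coordinate $\omega(e)$ from $q$, which is manifestly reversible with respect to $\Q_M = q^{E_{d,M}}$ (and trivially reversible in the $x$-variable since it does not touch $x$); tensoring with $\pi$ in the $x$-variable preserves this.

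The substantive computation is the walk part. I would show that for all $(\omega,x)$ and all neighbours $x \sim y$,
\[
\rho_M(\omega,x)\,\omega^\lambda_t([x],[y]) = \rho_M(\omega,y)\,\omega^\lambda_t([y],[x]),
\]
which is the pointwise detailed balance condition for the jump rates. Since the environment coordinate $\omega$ is unchanged by a walk step, the $\Q_M$ factor cancels, and the condition reduces to
\[
\pi(x)\,\omega^\lambda_t([x],[y]) = \pi(y)\,\omega^\lambda_t([y],[x]).
\]
Writing $y = x \pm e_i$ and recalling $\omega_t^\lambda([x],[y]) = e^{\lambda(y-x)\cdot e_1}\,\omega_t(\{[x],[y]\})$ together with $\pi(x) = e^{2\lambda\, x\cdot e_1}$, the left side is $e^{2\lambda x\cdot e_1} e^{\lambda(y-x)\cdot e_1}\omega_t(\{[x],[y]\}) = e^{\lambda(x+y)\cdot e_1}\omega_t(\{[x],[y]\})$, and the right side is $e^{2\lambda y\cdot e_1} e^{\lambda(x-y)\cdot e_1}\omega_t(\{[x],[y]\}) = e^{\lambda(x+y)\cdot e_1}\omega_t(\{[x],[y]\})$, using the symmetry $\omega_t(\{[x],[y]\}) = \omega_t(\{[y],[x]\})$; so the two sides coincide. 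Summing the pointwise identity over $y \sim x$ and integrating against test functions then yields that the walk part of $\mathcal L_M$ is self-adjoint in $L^2(\rho_M)$.

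A point requiring a little care — and the main thing I would want to state cleanly rather than the main difficulty — is that the $\zd$-valued walk $X_t^M$ (as opposed to its projection to $\Tdn$) lives on an infinite state space, and $\pi$ is not a finite measure on $\zd$; one should note that the rates depend on $x$ only through $[x] \in \Tdn$, so the process decomposes over cosets of the sublattice $2M\zd$ (or the appropriate period lattice) and on each coset $\pi$ differs only by a global constant, so $\sigma$-finiteness of $\rho_M$ and the detailed balance identity are all that is needed; reversibility here means $\int g\,\mathcal L_M f\,d\rho_M = \int f\,\mathcal L_M g\,d\rho_M$ for $f,g \in C_0(\confs_M \times \Tdn)$, which follows from the pointwise identities above by Fubini (all sums are finite or absolutely convergent since $f,g$ have compact support). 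Finally, I would remark that summing the reversible walk part and the reversible refresh part gives reversibility of the full generator $\mathcal L_M$ with respect to $\rho_M$, completing the proof.
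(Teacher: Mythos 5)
Your proposal is correct and follows essentially the same route as the paper: both verify symmetry of $\mathcal{L}_M$ in $L^2(\rho_M)$ by treating the refresh term via the product structure of $\Q_M$ (resampling an edge from $q$ is self-adjoint) and the walk term via the identity $\pi(x)e^{\lambda(y-x)\cdot e_1}\omega(\{[x],[y]\}) = e^{\lambda(x+y)\cdot e_1}\omega(\{[x],[y]\}) = \pi(y)e^{\lambda(x-y)\cdot e_1}\omega(\{[y],[x]\})$, which is exactly the symmetrization the paper carries out inside its Fubini computation. Your framing as pointwise detailed balance plus a remark on $\sigma$-finiteness of $\pi$ is a slightly cleaner packaging of the same argument, but not a genuinely different proof.
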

\begin{proof}
    Let $f,g \in \mathcal{D}$, with 
    \[\int |f| d\rho< \infty \quad \text{ and } \quad \int |g| d\rho < \infty.\]
    We want to show that $\langle \L f,g\rangle_{\rho}  = \langle f, \L g \rangle_{\rho}$\\
    Let $E_f = \{e \in E_d| \exists \omega \in \confs, \; \exists z \in \zd,\; \exists v \in [0,\kappa] \text{ such that } f(\omega,z) \neq f(\omega_{e \leftarrow v},z)\}$, and analogously define $E_g$.\\
    Using that $f,g \in \mathcal{D} $ we have that $||f||_{\infty} < \infty $, $||g||_{\infty} < \infty $, and $|E_f \cup E_g| < \infty$.\\
    We have that $|\L f (\omega,x)| \leq 2 C_f (2d\kappa + \mu |E_{f}| )$, so we have that
    \begin{eqnarray*}
        \int |(\L f)g| d\rho \leq  2 C_f (2d\kappa + \mu |E_{f}| )\int |g| d\rho < \infty.
    \end{eqnarray*}
    So we can apply Fubini's theorem

    \begin{eqnarray}
        \langle \L f,g\rangle_{\rho} &=& \int_{\confs \times \zd} (\L f)(\omega,x) g(\omega,x) d\rho(\omega,x) \nonumber\\
        &=&\sum_{x\in \zd} \int_{\confs} (\L f)(\omega,x) g(\omega,x) e^{2\lambda x \cdot e_1} dQ(\omega) \quad \text{ (Fubini)} \nonumber\\
        &=& \sum_{x\in \zd} \int_{\confs}\sum_{e \in E_{f} \cup E_g} \mu \left[ \int_{0}^{\kappa} f(\omega_{e \leftarrow v},x) - f(\omega,x) dq(v)\right]g(\omega,x) e^{2\lambda x \cdot e_1} dQ(\omega)\nonumber\\ 
        && +\sum_{x\in \zd} \int_{\confs} \sum_{y \sim x} \omega^{\lambda}(x, y) \left[f(\omega,y) - f(\omega,x) \right] g(\omega,x) e^{2\lambda x \cdot e_1} dQ(\omega) \quad \text{(linearity)}\nonumber\\ 
        &=& \mu \sum_{x\in \zd} \int_{\confs}\sum_{e \in E_f \cup E_g} e^{2\lambda x \cdot e_1} \int_{0}^{\kappa} f(\omega_{e \leftarrow v},x) g(\omega,x) dq(v) dQ(\omega) \label{firstterm}\\
        &-& \mu |E_{f} \cup E_g| \sum_{x\in \zd} \int_{\confs} e^{2\lambda x \cdot e_1} f(\omega,x) g(\omega,x) dQ_(\omega) \nonumber\\
        &+&\int_{\omega \in \confs} \sum_{x\in \zd} \sum_{y \sim x} e^{\lambda(x+y)\cdot e_1}\omega(x,y) \\
        && \quad \quad \left[f(\omega,y)g(\omega,x)- f(\omega,x) g(\omega,x)\right] dQ(\omega)\, \text{(Fubini)}. \label{lastline}
    \end{eqnarray}
    Next we are first interested in the term in line \eqref{firstterm} , where we apply again Fubini's theorem
    \begin{eqnarray*}
        && \sum_{x\in \zd} \int_{\confs}\sum_{e \in E_{f} \cup E_g} e^{2\lambda x \cdot e_1} \int_{0}^{\kappa} f(\omega_{e \leftarrow v},x) g(\omega,x) dq(v) dQ(\omega) \\
        &&= \sum_{x\in \zd} e^{2\lambda x \cdot e_1}\sum_{e \in E_{f} \cup E_g}  \int_{0}^{\kappa} \int_{ \confs} f(\omega_{e \leftarrow v},x) g(\omega,x) dQ(\omega) dq(v)\\
        &&=\sum_{x\in \zd}e^{2\lambda x \cdot e_1}\sum_{e \in E_f \cup E_g} \int_{0}^{\kappa} \int_{0}^{\kappa} \int_{[0,\kappa)^{E_{d} \setminus \{e\}}} f(\omega_{e \leftarrow v},x) g(\omega_{e \leftarrow w},x) dq^{E_{d} \setminus \{e\}} dq(w) dq(v)\\
        &&=\sum_{x\in \zd}e^{2\lambda x \cdot e_1}\sum_{e \in E_f \cup E_g} \int_{0}^{\kappa} \int_{\omega \in \confs} f(\omega,x) g(\omega_{e \leftarrow w},x) dQ(\omega) dq(w).
    \end{eqnarray*}
    Fubini's theorem can be used as 
    \begin{eqnarray*}
        \sum_{x\in \zd} \int_{\confs}\sum_{e \in E_{f} \cup E_g} e^{2\lambda x \cdot e_1 } \int_{0}^{\kappa} |f(\omega_{e \leftarrow v},x) g(\omega,x)| dq(v) dQ(\omega) \leq |E_f \cup E_g| C_f \int e^{2\lambda x} |g| d\rho < \infty.
    \end{eqnarray*}
    The second term in the line \eqref{lastline} is symmetric in $f$ and $g$. So we only need to look at the following:
    \begin{eqnarray*}
        &&\int_{\confs} \sum_{x\in \zd} \sum_{y \sim x} e^{\lambda(x+y) \cdot e_1}\omega(x,y) f(\omega,y)g(\omega,x)dQ(\omega)\\
        &&= \int_{\confs} \sum_{x, y\in \zd} \sum_{y \sim x} e^{\lambda(x+y)\cdot e_1}\omega(x,y) \mathds{1}_{x\sim y} \frac{\left(f(\omega,y)g(\omega,x) + f(\omega,x)g(\omega,y)\right)}{2} \;dQ(\omega)\\
        &&= \int_{\confs} \sum_{x\in \zd} \sum_{y \sim x} e^{\lambda(x+y)\cdot e_1}\omega(x,y) f(\omega,x)g(\omega,y)dQ(\omega).
    \end{eqnarray*}
    This way we get that 
    \[
        \langle \L f,g\rangle_{\rho}  = \langle f, \L g \rangle_{\rho}
    \]
    and $\rho$ is a reversible measure for the process $(\omega_t, X_t)_{t \geq 0}$.
\end{proof}

\begin{lem} \label{reversibility_X}
    Let $x,y \in \zd$ and $t \geq 0$ then 
    \begin{equation}\label{eq:detailedBalance}
        \pi(x) \P(X_t = y | X_0 = x) = \pi(y) \P(X_t = x | X_0 = y).
    \end{equation}
\end{lem}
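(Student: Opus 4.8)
The statement to prove is the detailed-balance-type identity $\pi(x)\,p^M_t(x,y)=\pi(y)\,p^M_t(y,x)$ for the $\zd$-valued VBRW on the torus, where $p^M_t(x,y)=\P^M(X_t=y\mid X_0=x)$. The natural route is to deduce this from the reversibility of the joint Markov process $(\omega^M_t,X^M_t)_{t\geq0}$ that was established in the previous lemma, whose reversible measure is $\rho_M=\Q_M\otimes\pi$. The point is that $p^M_t(x,y)$ is obtained from the joint transition kernel by starting the environment in its stationary distribution $\Q_M$ and integrating it out, and reversibility of $(\omega^M_t,X^M_t)$ with respect to a \emph{product} measure should pass cleanly to the marginal walk.

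Here is how I would carry it out. First, write $P^M_t\big((\omega,x),(d\omega',y)\big)$ for the transition kernel of the joint process and recall that reversibility with respect to $\rho_M$ means $\rho_M(d\omega,x)\,P^M_t\big((\omega,x),(d\omega',y)\big)=\rho_M(d\omega',y)\,P^M_t\big((\omega',y),(d\omega,x)\big)$ as measures on $(\confs_M\times\Tdn)^2$. Next, observe that by definition of the VBRW started from a deterministic environment drawn from $\Q_M$,
\[
p^M_t(x,y)=\int_{\confs_M}\int_{\confs_M} P^M_t\big((\omega,x),(d\omega',y)\big)\,d\Q_M(\omega),
\]
i.e. integrating over the starting environment against $\Q_M$ and over the terminal environment freely. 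Now multiply by $\pi(x)$, use $\rho_M=\Q_M\otimes\pi$ to rewrite $\pi(x)\,d\Q_M(\omega)$ as $\rho_M(d\omega,x)$ (on the $x$-slice), apply the reversibility identity inside the double integral, and then integrate the $\omega$ and $\omega'$ variables out in the opposite order. Since $\Q_M$ is the stationary distribution of the environment alone, integrating $P^M_t\big((\omega',y),(d\omega,x)\big)$ over $\omega$ freely and $\omega'$ against $\Q_M$ again yields exactly $\pi(y)\,p^M_t(y,x)$. The $e_1$-translation symmetry and product structure of the torus environment are what make the environment marginal stationary under $\Q_M$, which is the only environmental input needed.

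The one point requiring a little care — and the main obstacle — is the measure-theoretic bookkeeping: the joint process lives on the non-discrete space $\confs_M\times\Tdn$, so I should phrase reversibility as an identity of (sigma-finite) measures and justify disintegrating/integrating out the environment coordinates, checking that the relevant integrals are finite (which follows because $\Tdn$ is finite and $\pi$ is bounded on it, and because $\Q_M$ is a probability measure). A cleaner alternative, which I would mention as a remark, is to avoid kernels entirely: test the reversibility identity $\langle \L_M f,g\rangle_{\rho_M}=\langle f,\L_M g\rangle_{\rho_M}$ against functions of the form $f(\omega,z)=\mathds{1}_{z=y}$ and $g(\omega,z)=\mathds{1}_{z=x}$ (bounded, hence admissible), which gives reversibility of the semigroup $e^{t\L_M}$ with respect to $\rho_M$, and then integrate the environment coordinate against $\Q_M$; since $\mathds{1}_{z=y}$ does not depend on $\omega$, the integration against the product measure $\Q_M\otimes\pi$ factorizes and the stated identity drops out. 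Either way the computation is short once the reversibility of the joint process is in hand.
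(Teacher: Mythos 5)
Your proposal is correct and follows essentially the same route as the paper: both deduce the identity from reversibility of the joint process $(\omega^M_t,X^M_t)$ with respect to the product measure $\rho_M=\Q_M\otimes\pi$ and then integrate out the environment coordinate, the paper phrasing this via the time-reversal distributional identity under $\P^M_\rho$ and you via the equivalent kernel/measure identity. Your remark on treating $\rho_M$ as a sigma-finite measure (since $\pi$ is not normalizable on $\zd$) is a reasonable extra precaution but does not change the argument.
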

\begin{proof}
    Let $\P_{\rho}(.) = \int \P_x(.) d\rho$ and $t >0$ then by reversibility under $\P_{\rho}$
    \begin{equation*}
        \left((\omega_t,X_t),(\omega_0,X_0) \right) \overset{d}{=} \left((\omega_0,X_0),(\omega_t,X_t)\right).
    \end{equation*}
    Let $A= \{((a,b),(c,d)) \in (\confs \times \zd)^2: a=x \text{ and } c = y\}$ then 
    \begin{equation*}
        \P_{\rho}\left( \left((\omega_t,X_t),(\omega_0,X_0) \right)\in A\right) = \P_{\rho}\left( \left((\omega_0,X_0),(\omega_t,X_t) \right)\in A\right),
    \end{equation*}
    but 
    \begin{equation*}
        \P_{\rho}\left( \left((\omega_t,X_t),(\omega_0,X_0) \right)\in A\right) = \P_{\rho}\left(X_t = x, X_0 =y\right) =e^{2\lambda y \cdot e_1}\P\left(X_t = x| X_0 =y\right)
    \end{equation*}
    and 
    \begin{equation*}
        \P_{\rho}\left( \left((\omega_0,X_0),(\omega_t,X_t) \right)\in A\right) = \P_{\rho}\left(X_t = y, X_0 =x\right) = e^{2\lambda x \cdot e_1}\P\left(X_t = y| X_0 =x\right).\\
    \end{equation*}
    So 
    \begin{equation*}
        \pi(x) \P(X_t = y | X_0 = x) = \P(X_t = x | X_0 = y).
    \end{equation*}
\end{proof}

\begin{cor} \label{cor:rev}
    Let $x,y \in \zd $ then 
    \begin{equation}\label{eq:correversibility}
        \P(X_t = y + x | X_0 = y)= e^{2 \lambda ( x \cdot e_1)} \P(X_t = y - x | X_0 = y) \quad \forall t \geq 0 .
    \end{equation}
\end{cor}
\begin{proof}
    First note that by Lemma \ref{reversibility_X} we have \[
    \pi(y) \P(X_t = y + x | X_0 = y) = \pi(y+x) \P(X_t = y  | X_0 = y +x)\]
    and $\pi(y) = e^{2\lambda (y \cdot e_1)} $, $\pi(x+y) = e^{2\lambda ((x+y) \cdot e_1)}$ so,
    \[ \P(X_t = y + x | X_0 = y) = e^{2\lambda (x \cdot e_1)} \P(X_t = y | X_0 = y + x).\]
    Next, we see that as the measure $\Q$ is invariant and by the definition of $\P$ we have that under $\P$ the law of $(\omega_t)_{t \geq 0}$ is translation invariant for all $t \geq 0$. So we have that the distribution of $(X_t)_{t \geq 0}$ under $\P$ is translation invariant, so  \[\P(X_t = y  | X_0 = y +x) = \P(X_t = y - x  | X_0 = y)\]
    and we see that \eqref{eq:correversibility} is true.
\end{proof}

\subsection{CLT}
\begin{thm}[\textit{CLT}] \label{CLTVBRW}
    \begin{eqnarray}
        \frac{X_t \cdot e_1 - v(\lambda,\mu) t}{\sqrt{t}}  \xrightarrow[t \to \infty]{(d)} \mathcal{N}\left(0,\frac{Var(X_{\tau_1} \cdot e_1)}{\E[\tau_1]}\right)
    \end{eqnarray}
\end{thm}

\begin{proof}

\ifdefined\diss
    First note that as $(\tau_n)_{n \geq 0}$ are regeneration times and as $N$ has some exponential tails, we have that $X_{\tau_{n}}$ is a sum of i.i.d. random variables.
    \begin{eqnarray} \label{iidCLT1}
        \frac{X_{\tau_n} \cdot e_1 - v(\lambda,\mu) n \E[\tau_1]}{\sqrt{n Var(X_{\tau_1} \cdot e_1)}}  \xrightarrow[n \to \infty]{(d)} \mathcal{N}(0,1)
    \end{eqnarray}
    For $t \geq 0$, let $l(t)= \max\{l \in \N: \tau_l \leq t\}$. $l(t)$ is a renewal process, so by renewal argument we get that
    \begin{eqnarray} \label{eq:renewalVBRW}
        \lim_{t \rightarrow \infty} l(t)/t = \E[\tau]^{-1} \quad \text{ a.s}.
    \end{eqnarray} 
    Next we decompose:
    \begin{eqnarray}
        \frac{X_t \cdot e_1 - v(\lambda,\mu) t}{\sqrt{t}} = \frac{X_t \cdot e_1 - X_{\tau_{l(t)}} \cdot e_1 }{\sqrt{t}} + \frac{X_{\tau_{l(t)}} \cdot e_1 - v(\lambda,\mu) \E[\tau_{l(t)}]}{\sqrt{t}} + \frac{v(\lambda,\mu) (\E[\tau_{l(t)}] -t)}{\sqrt{t}}
    \end{eqnarray}
    Now note that $|X_{t} \cdot e_1 - X_{\tau_{l(t)}} \cdot e_1| \leq N([\tau_{l(t)},\tau_{l(t)+1}])$, but $N([\tau_{l(t)},\tau_{l(t)+1}])$ has the same law then $N$ has some exponential moments so for all $\varepsilon >0 $
    \begin{eqnarray}
        \P(|X_{t} \cdot e_1 - X_{\tau_{l(t)}} \cdot e_1| \geq \sqrt{t} \varepsilon) \leq \frac{\E[N^2]}{t \varepsilon^2} \xrightarrow[t \to \infty]{} 0,
    \end{eqnarray}
    so
    \begin{eqnarray}
        \frac{X_t \cdot e_1 - X_{\tau_{l(t)}} \cdot e_1 }{\sqrt{t}} \xrightarrow[t \to \infty]{(d)} 0.
    \end{eqnarray}
    For the second term we have that 
    \begin{eqnarray*}
        0\leq t - \E[\tau_{l(t)}] \leq \E[\tau_{l(t) + 1}] - \E[\tau_{l(t)}] = \E[\tau_{1}],
    \end{eqnarray*}
    so 
    \begin{eqnarray}
        \frac{v(\lambda,\mu) (\E[\tau_{l(t)}] -t)}{\sqrt{t}} \xrightarrow[t \to \infty]{(d)} 0.
    \end{eqnarray}
    Now we decompose
    \begin{eqnarray}
        \frac{X_{\tau_{l(t)}} \cdot e_1 - v(\lambda,\mu) \E[\tau_{l(t)}]}{\sqrt{t}} = \frac{X_{\tau_{l(t)}} \cdot e_1 - v(\lambda,\mu) \E[\tau_{l(t)}]}{\sqrt{l(t)}} \sqrt{\frac{l(t)}{t}}.
    \end{eqnarray}
    Using that by equation \eqref{eq:renewalVBRW} that $\sqrt{\frac{l(t)}{t}} \xrightarrow[t \to \infty]{} \frac{1}{\sqrt{\E[\tau_1]}}$ a.s. 
    Using \eqref{iidCLT1}
    \begin{eqnarray}
        \frac{X_{\tau_{l(t)}} \cdot e_1 - v(\lambda,\mu) \E[\tau_{l(t)}]}{\sqrt{l(t)}} = \frac{X_{\tau_{l(t)}} \cdot e_1 - v(\lambda,\mu) l(t) \E[\tau_1 ]}{\sqrt{l(t)}}\xrightarrow[t \to \infty]{(d)} \mathcal{N}\left(0,Var(X_{\tau_1})\right) .
    \end{eqnarray}
    Altogether we get 
    \begin{eqnarray}
        \frac{X_t \cdot e_1 - v(\lambda,\mu) t}{\sqrt{t}}  \xrightarrow[t \to \infty]{(d)} \mathcal{N}\left(0,\frac{Var(X_{\tau_1} \cdot e_1)}{\E[\tau_1]}\right).
    \end{eqnarray}

\else

    The proof goes analogously to the proof of Theorem 3.1 in \cite{unbiasedPercolation}

\fi
\end{proof}
\subsection{Positivity of the speed}
    \begin{lem}[\textit{Positivity of the expectation at a finite time}] We have
        $\E[X_t \cdot e_1] > 0$ for all $t \geq 0$.
    \end{lem}
    \begin{proof}
        \begin{eqnarray*}
            \E[X_t \cdot e_1] &=& \sum_{n = 0}^{\infty} \sum_{y \cdot e_1 = n} n \left(\P(X_t \cdot e_1 = y) - \P(X_t \cdot e_1 = -y)\right)\\
            &\geq& \sum_{n = 0}^{\infty} \sum_{y \cdot e_1 = n} n \P(X_t \cdot e_1 = -y)\left(e^{2\lambda n} - 1\right),\\
        \end{eqnarray*}
        where we used Corollary \ref{cor:rev}.\\
        Then using that for $t>0$, $\P(X_t \cdot e_1 = 0) < 1 $, we have that 
        $\E[X_t \cdot e_1] > 0$.
    \end{proof}

    \begin{lem}[\textit{Non-negative speed}] \label{speednonNegative}
        \[v(\lambda, \mu ) \geq 0.\]
    \end{lem}
    \begin{proof}
        $v(\lambda, \mu )  = \lim_{t \rightarrow + \infty} \frac{\E[X_t\cdot e_1]}{t}$. But $\E[X_t \cdot e_1] > 0$ for $t > 0$.
    \end{proof}

    \begin{thm}[\textit{Positive speed}]\label{positiveSpeed} Let $\lambda >0$, then
        \[v(\lambda, \mu ) > 0.\]
    \end{thm}
    \begin{proof}
        Let $Y_t = -X_t$ for all $t \geq 0$.\\
        Note that the increments $((X_{\tau_{n+1}} - X_{\tau_n})\cdot e_1)_{n \geq 0}$ are i.i.d. such that $(X_{\tau_n} \cdot e_1)_{n \geq 0}$ is a one-dimensional random walk. Let us assume that $\E[X_{\tau}] = 0$ (so $v(\lambda, \mu ) = 0$), then using the CLT (Theorem \ref{CLTVBRW})
        \begin{equation*}
            \frac{X_{t} \cdot e_1}{\sqrt{t}} \rightarrow_{t \rightarrow +\infty} \mathcal{N}(0,\sigma^2) \text{ in distribution.}
        \end{equation*}
        Note that $\sigma^2 = \E[X_{\tau_1}^2]/\E[\tau] > 0$.
        Similarly, 
        \begin{equation*}
            \frac{Y_{t} \cdot e_1}{\sqrt{t}} \rightarrow_{t \rightarrow +\infty} \mathcal{N}(0,\sigma^2) \text{ in distribution.}
        \end{equation*} 
        We now recall that Lemma \ref{cor:rev} gives us for $ y \in \zd$
        \begin{eqnarray*}
            \P(X_t = y) \geq e^{2\lambda y \cdot e_1} \P(X_t = -y) = e^{2\lambda y \cdot e_1} \P(Y_t = y).
        \end{eqnarray*}
        Then for $\alpha >0$ we have that
        \begin{eqnarray*}
            \P(Y_t \cdot e_1 \leq -\alpha \sqrt{t}) \geq e^{2\lambda \alpha \sqrt{t}} \P(X_t \cdot e_1 \leq -\alpha \sqrt{t}).
        \end{eqnarray*}
        So 
        \begin{eqnarray}\label{contradiction}
            \P(\frac{Y_t \cdot e_1}{\sqrt{t}} \leq -\alpha) \geq e^{2\lambda \alpha \sqrt{t}} \P(\frac{X_t \cdot e_1}{\sqrt{t}}\leq -\alpha),
        \end{eqnarray}
        but by the convergence in distribution we have that 
        \begin{eqnarray}\label{setconv}
            \lim_{t \rightarrow + \infty} \P(\frac{Y_t \cdot e_1}{\sqrt{t}} \leq -\alpha) = \lim_{t \rightarrow + \infty} \P(\frac{X_t \cdot e_1}{\sqrt{t}} \leq -\alpha) = \Phi(-\alpha)
        \end{eqnarray}
        with $\Phi(-\alpha)$ the probability that a $\mathcal{N}(0,\sigma^2)$ distributed random variable is smaller than $-\alpha$.
        For \eqref{contradiction} and \eqref{setconv} to be satisfied we would need $\Phi(-\alpha) = 0$ which is not possible as $\sigma^2 > 0$.\\
        So it follows by contradiction that $\E[X_{\tau}] \neq 0$, and with Lemma \ref{speednonNegative} we get the claim.
    \end{proof}

\section{The asymptotics of the speed of the (N-)VBRW}
In this section we want to understand the behavior of the speed for large biases. We recall that we are assuming that the conductances are upper-bounded by $\kappa$, but they do not need to be bounded away from $0$. The couplings presented in this section are adapted from the couplings in \cite{percolation}.
\subsection{Time-normalizing the variable speed process (NVBRW)}
In this section we want to introduce the normalized variable speed biased random walk on dynamical conductances, and explain why studying this process makes sense.
\begin{defi}\label{defNVBRW}
    Let $\lambda,\mu >0$.\\
    Let $(\tilde{\eta}_t,\widetilde{X}_t)_{t\geq 0}$ be a biased variable speed random walk on dynamical random conductances with parameter $\lambda$ and $Z_{\lambda}\mu$, with $\lambda,\mu >0 $ and $Z_{\lambda} = e^{\lambda}+e^{-\lambda}+2d-2$ as before.
    Then we call \[(\eta_t,X_t) = (\tilde{\eta}_{tZ_{\lambda}^{-1}},\widetilde{X}_{tZ_{\lambda}^{-1}})\]
    the normalized variable speed biased random walk (NVBRW) on dynamical conductances with parameter $\lambda$ and $\mu$.\bigskip\\
    We will denote the speed of the NVBRW on dynamical conductances with parameter $\lambda$ and $\mu$
    \[\hat{v}(\lambda,\mu) := \lim_{t\rightarrow\infty}\frac{X_t}{t}.\]
\end{defi}

The first advantage of this process is that for $\lambda \rightarrow \infty$ the jump rate of the walker is not diverging. This will allow us to study the asymptotic behavior of the speed of a NVBRW on dynamical conductances $\hat{v}(\lambda,\mu)$ and then link those result to the speed of a VBRW on dynamical conductances $v(\lambda,Z_{\lambda} \mu)$ using the following lemma.
\begin{lem}\label{speedNVvsV}
    For all $\lambda,\mu >0$:
    \[v(\lambda,Z_{\lambda} \mu) = Z_{\lambda} \hat{v}(\lambda,\mu)\]
    and in particular $\hat{v}(\lambda,\mu)$ exists $\P$-a.s.
\end{lem}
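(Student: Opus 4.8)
The plan is to deduce the identity directly from the definition of the NVBRW as a deterministic time-change of a VBRW, combined with the formula for the speed in terms of regeneration times (Lemma~\ref{regenerativSpeed}). Let $(\tilde\eta_t,\widetilde X_t)_{t\geq 0}$ be a VBRW with parameters $\lambda$ and $Z_\lambda\mu$, so by Lemma~\ref{regenerativSpeed} the limit $\lim_{t\to\infty}\widetilde X_t\cdot e_1/t$ exists $\P$-a.s.\ and equals $v(\lambda,Z_\lambda\mu)$. By Definition~\ref{defNVBRW} we have $X_t=\widetilde X_{tZ_\lambda^{-1}}$, so for $\P$-a.e.\ realization,
\[
\frac{X_t\cdot e_1}{t}=\frac{\widetilde X_{tZ_\lambda^{-1}}\cdot e_1}{t}=Z_\lambda^{-1}\cdot\frac{\widetilde X_{tZ_\lambda^{-1}}\cdot e_1}{tZ_\lambda^{-1}}.
\]
As $t\to\infty$ the argument $tZ_\lambda^{-1}\to\infty$, so the second factor converges $\P$-a.s.\ to $v(\lambda,Z_\lambda\mu)$; hence $X_t\cdot e_1/t\to Z_\lambda^{-1}v(\lambda,Z_\lambda\mu)$ $\P$-a.s. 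This shows both that $\hat v(\lambda,\mu)$ exists $\P$-a.s.\ and that $v(\lambda,Z_\lambda\mu)=Z_\lambda\hat v(\lambda,\mu)$.

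The only point requiring a little care is that the deterministic time-change preserves the law of the environment process in the sense needed: one should note that $(\tilde\eta_t)_{t\geq0}$ a refreshing process with rate $Z_\lambda\mu$ has, after slowing time by $Z_\lambda$, exactly the law of a refreshing process with rate $\mu$, and that the joint law $(\eta_t,X_t)_{t\geq0}=(\tilde\eta_{tZ_\lambda^{-1}},\widetilde X_{tZ_\lambda^{-1}})_{t\geq0}$ is therefore genuinely the law of a biased variable speed walk on rate-$\mu$ conductances with the jump rates scaled by $Z_\lambda^{-1}$ — which is what Definition~\ref{defNVBRW} declares the NVBRW to be. With that bookkeeping in place the a.s.\ convergence above is immediate, since a deterministic linear reparametrization of time does not affect almost-sure limits of $Z_t/t$-type quantities.

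I do not expect any real obstacle here: the statement is essentially a reformulation of the fact that speeds transform linearly under deterministic linear time-changes, and the existence of $\hat v$ is inherited from the existence of $v(\lambda,Z_\lambda\mu)$ proved in Lemma~\ref{regenerativSpeed}. If one wants to avoid even mentioning the time-change of the limit, an alternative is purely algebraic: express $\hat v(\lambda,\mu)$ through its own regeneration structure and observe that the NVBRW regeneration times are exactly $Z_\lambda$ times those of the underlying VBRW while the displacements $X_{\tau_{i+1}}-X_{\tau_i}$ are unchanged, so the ratio $\E^\lambda[X_\tau\cdot e_1]/\E^\lambda[\tau]$ picks up exactly the factor $Z_\lambda^{-1}$; but the time-change argument above is shorter and cleaner.
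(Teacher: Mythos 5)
Your argument is correct and is essentially identical to the paper's own proof: both deduce the identity by rewriting $\lim_{t\to\infty}\widetilde X_t\cdot e_1/t$ along the deterministic time-change $t\mapsto tZ_\lambda^{-1}$, with existence of $v(\lambda,Z_\lambda\mu)$ supplied by Lemma~\ref{regenerativSpeed}. The extra bookkeeping you add about the environment's refresh rate under the time-change is consistent with Definition~\ref{defNVBRW} and does not change the argument.
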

\begin{proof}
    Let $(\widetilde{X}_t,\tilde{\eta}_t)_{t\geq 0}$ and $(X_t,\eta_t)_{t\geq 0}$ be as in Definition \ref{defNVBRW}, then 
    \begin{eqnarray*}
        v(\lambda,Z_{\lambda}\mu) &=&  \lim_{t \rightarrow \infty}\frac{\widetilde{X}_t}{t}
        = \lim_{t \rightarrow \infty}\frac{\widetilde{X}_{Z_{\lambda}^{-1}t}}{Z_{\lambda}^{-1}t}\\
        &=& Z_{\lambda}\lim_{t \rightarrow \infty}\frac{X_t}{t} 
        = Z_{\lambda} \hat{v}(\lambda,\mu).
    \end{eqnarray*}
\end{proof}
But there is another advantage of this process, when one wants to compare two VBRW on dynamical conductances with different biases. Let $X$ be a VBRW with parameters $\lambda+\varepsilon,\mu$ and $Y$ be a VBRW with parameters $\lambda,\mu$, then $X$ is attempting more jumps and so discovers the environment faster. This means that from the perspective of $X$ the environment is refreshing slower than from the perspective of $Y$ so to compare both processes we would like to speed up the environment of $X$ and time-change the processes so that both walkers attempt jumps at the same rate.
\begin{rmk}\label{remarkNormalization}
    Note that for a fixed $\lambda$ the NVBRW process is just a constant time change of the VBRW process. This means that we recover some properties:
    \begin{itemize}
        \item[(i)] For all $\lambda,\mu>0$ we have that $\hat{v}(\lambda,\mu)>0$.
        \item[(ii)] Just as for the VBRW on dynamical conductances we can define an infected set $(I_t)_{t\geq 0}$ and use this set to define the same way regeneration times $(\tau_n)_{n\geq 0}$. Note that the rate at which a jump is attempted is $\kappa$ so the distribution of $(|I_t|)_{t\geq0}$ is not depending on $\lambda$. In particular one gets that $\E[N(\tau_1)] = e^{\kappa/\mu}$ and $\E[\tau_1] = \frac{1}{\kappa} e^{\kappa/\mu}$.
        \item[(iii)] We can do for the NVBRW the same alternative construction as for the VBRW, we only need to choose the Poisson process $\mathcal{P}$ tho be with rate $\kappa$ instead of $\Zl \kappa$. 
    \end{itemize}
\end{rmk}
\begin{rmk}
    Note that since $0$ is not excluded in the environment, the NVBRW on dynamical percolation is a special case of the studied model (the alternative construction of the NVBRW corresponds to the construction of the random walk in \cite{percolation}). For this case we recover the same asymptotics that were obtained in \cite{percolation}.
\end{rmk}
\subsection{The one-dimensional totally asymmetric NVBRW ($\lambda = +\infty$)}
Let $(\omega_t)_{t \geq 0}$ be dynamical conductances on $\mathbb{Z}$ with parameter $q$ and $\mu >0$. We define the totally asymmetric NVBRW $(A_t)_{t \geq 0}$ on $(\omega_t)_{t \geq 0}$ as follows. Let $\mathcal{P}$ be a Poisson process with rate $\kappa$ then for every point $t \in \mathcal{P}$, $A_t$ attempts a jump in direction $e_1$. This means that for every $t \in \mathcal{P}$ we sample an independent uniformly distributed random variable $U_t$ on $[0,\kappa]$. If $U_t \leq \omega_t(A_{t^-}, A_{t^-}+1)$ then $A_{t} = A_{t^-} +1$ otherwise $A_{t} = A_{t^-}$. We assume  $A_0 = 0$ a.s.\bigskip \\
For some time evolution of the conductances $\omega = (\omega_t)_{t \geq 0}$ we will use as for the previous model $P_{\omega}$ and $E_\omega$ to denote the probability measure and the expectation corresponding to the random walk on the environment $\omega$. $\P$ and $\E$ will be used for the joint process.

\begin{thm} 
    Let $(A_t)_{t \geq 0}$ be a totally asymmetric NVBRW on dynamical conductances with parameter $q$ and $\mu >0$. If $\tilde{\omega}$ is a random variable distributed according to $q$ then,
    \begin{equation}\label{totallyAsymetricSpeed}
        v_A(\mu) := \lim_{t\rightarrow \infty} \frac{A_t}{t} = \frac{\E\left[\frac{\tilde{\omega}}{\mu + \tilde{\omega}}\right]}{\E\left[\frac{1}{\mu + \tilde{\omega}}\right]} \quad \mathbb{P}- \text{a.s.}
    \end{equation}
\end{thm}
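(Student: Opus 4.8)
The plan is to exploit the fact that the totally asymmetric NVBRW $(A_t)_{t\geq 0}$ can only increase, so that its displacement is just a counting process: $A_t = |\{s \in \mathcal{P}: s \leq t,\ U_s \leq \omega_s(A_{s^-}, A_{s^-}+1)\}|$. Since $A$ never revisits a site, the relevant quantity at any moment is the conductance of the single edge $\{A_{t^-}, A_{t^-}+1\}$ directly ahead of the walker. First I would set up a renewal/regeneration decomposition: let $T_0 = 0$ and let $T_{k+1}$ be the first time after $T_k$ at which $A$ makes a successful jump, i.e. $A_{T_{k+1}} = A_{T_{k+1}^-} + 1 = k+1$. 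The inter-jump durations $(T_{k+1} - T_k)_{k \geq 0}$ should be shown to be i.i.d., because at each fresh site the forward edge is in stationarity (the walker has never touched it), and the waiting time until a successful jump depends only on that edge's dynamics going forward, which is a fresh Markovian evolution independent of the past. By the strong law of large numbers for renewal processes, $\lim_{t\to\infty} A_t/t = 1/\E[T_1]$ a.s., where $T_1$ is the time to the first successful jump starting from a fresh forward edge in stationarity.

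The core computation is therefore $\E[T_1]$. Condition on the trajectory of the single forward edge's conductance $(\omega_t(0,1))_{t \geq 0}$, which evolves as: it starts in stationarity (value $\sim q$), and is refreshed at rate $\mu$ to an independent $q$-sample. Given this trajectory, $A$ attempts jumps at rate $\kappa$ (Poisson process $\mathcal{P}$) and each attempt at time $s$ succeeds with probability $\omega_s(0,1)/\kappa$. So the successful-jump times form, conditionally on the environment, an inhomogeneous Poisson process of rate $\omega_s(0,1)$; hence $T_1$ is the first point of that process, and $E_\omega[T_1] = \E_\omega\big[\int_0^\infty e^{-\int_0^t \omega_s(0,1)\,ds}\,dt\big] = \int_0^\infty \P_\omega(\text{no success by } t)\,dt$. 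Now I would compute $\P(\text{no success in } [0,t])$ by decomposing on the first refresh time of the edge: with $\sigma \sim \text{Exp}(\mu)$ the first refresh, and $W$ the (random, $q$-distributed) current conductance value, $\P(\text{no success in } [0,t]) = \E\big[e^{-W(t\wedge\sigma)}\,\mathbf{1}_{\sigma > t}\big] + \E\big[e^{-W\sigma}\mathbf{1}_{\sigma \leq t}\big]\cdot \P(\text{no success in }[\sigma,t])$, and the last factor is again $\P(\text{no success in }[0,t-\sigma])$ by the Markov/renewal property. Integrating over $t$ and using $\int_0^\infty e^{-Wt}\mu e^{-\mu t}\,dt = \mu/(\mu+W)$, $\int_0^\infty e^{-Wt}e^{-\mu t}\,dt = 1/(\mu+W)$, this renewal equation should resolve to $\E[T_1] = \E[1/(\mu+\tilde\omega)] / \E[\tilde\omega/(\mu+\tilde\omega)]$. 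Taking the reciprocal gives \eqref{totallyAsymetricSpeed}.

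The main obstacle I expect is making the regeneration argument fully rigorous — specifically, verifying that the inter-jump times are genuinely i.i.d. and that $\E[T_1] < \infty$ (which also justifies applying the renewal SLLN and rules out the walker getting stuck). Finiteness of $\E[T_1]$ follows because the forward edge is nondegenerate ($q \neq \delta_0$, so $\P(\tilde\omega > 0) > 0$ and refreshes happen infinitely often), giving an exponential tail for $T_1$; one can bound it by a geometric number of refresh cycles, in each of which there is a fixed positive chance of a success. The other delicate point is the conditional-Poisson-thinning step: one must argue carefully that, conditionally on the forward edge's trajectory, the successful attempts up to the first success form an inhomogeneous Poisson process, which follows from the thinning theorem for the attempt process $\mathcal{P}$ (rate $\kappa$) with retention probability $\omega_s(0,1)/\kappa$ at time $s$. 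Once these two points are in place, the renewal-equation computation is routine.
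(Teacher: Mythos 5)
Your proposal is correct and follows essentially the same route as the paper: a renewal decomposition at the successful jump times giving $v_A=1/\E[T_1]$, the conditional survival probability $P_\omega(T_1>t)=e^{-\int_0^t\omega_s(0,1)\,ds}$, and an averaging over the environment using the independent refreshes of the forward edge. The only cosmetic difference is that you solve a renewal equation in $t$ (conditioning on the first refresh) where the paper expands $\E[T_1]$ over successive refresh intervals and sums the resulting geometric series; both rest on the same two integrals $\E[1/(\mu+\tilde\omega)]$ and $\E[\mu/(\mu+\tilde\omega)]$ and yield the identical answer.
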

\begin{proof}
    Let $T = \inf\{t \geq 0 : A_t \neq 0\}$\\
    As the process $(A_s)_{s \geq t}$ is independent of $(\omega(y))_{t\leq s}$ for all $y \leq A_t$, we have that $v_A = \frac{1}{\E[T]}$. \bigskip \\
    Let $\omega = (\omega_t)_{t\geq 0}$ be a time evolution of the conductances.
    \begin{eqnarray*}
        E_{\omega}[T] &=& \int_{0}^{\infty} P_{\omega}(T \geq x) dx = \int_{0}^{\infty} e^{-\int_{0}^{x} \omega_u(0,1) du} dx.\\
    \end{eqnarray*}
    We now compute $\E \left[ e^{-\int_{0}^{x} \omega_u(0,1) du}\right]$.\\ 
    Let $\mathcal{S}$ be the Poisson process that gives the times at which the environment $\omega$ refreshes its values on the edge $(0,1)$. Let $(T_n)_{n\geq 0}$ be the points of $\mathcal{S}$ (with $T_0=0$).\\
    Then for $\tilde{\omega}$ an independent random variable distributed according to $q$.
    \begin{eqnarray*}
        \E[T] &=& \E\left[\int_{0}^{\infty}e^{-\int_{0}^{x} \omega_u(0,1) du} dx\right] = \E\left[\sum_{n=0}^{\infty}\int_{T_n}^{T_{n+1}}e^{-\int_{0}^{x} \omega_u(0,1) du} dx\right]\\
        &=& \sum_{n=0}^{\infty}\E\left[e^{-\sum_{k=1}^{n}(T_k-T_{k-1})\omega_{T_{k-1}}(0,1) } \int_{0}^{T_{n+1}-T_{n}} e^{-x \tilde{\omega}} dx\right]\\
        &=& \sum_{n=0}^{\infty} \E\left[e^{-\sum_{k=1}^{n}(T_k-T_{k-1})\omega_{T_{k-1}}(0,1) } \left( \mathds{1}_{\tilde{\omega} \neq 0} \frac{1-e^{-(T_{n+1}-T_{n}) \tilde{\omega}}}{\tilde{\omega}} + \mathds{1}_{\tilde{\omega} = 0} (T_{n+1}-T_n)\right)\right]\\
        &=& \sum_{n=0}^{\infty}\prod_{k=1}^{n}\E\left[e^{-(T_k-T_{k-1})\omega_{T_{k-1}}(0,1) }\right] \E\left[\mathds{1}_{\tilde{\omega} \neq 0} \frac{1-e^{-(T_{n+1}-T_{n}) \tilde{\omega}}}{\tilde{\omega}} + \mathds{1}_{\tilde{\omega} = 0} (T_{n+1}-T_n)\right]\\
        &=&\sum_{n=0}^{\infty}\left( \E\left[\frac{\mu}{\mu + \tilde{\omega}}\right] \right)^n \E\left[\mathds{1}_{\tilde{\omega} \neq 0} \frac{1-e^{-(T_{n+1}-T_{n}) \tilde{\omega}}}{\tilde{\omega}} + \mathds{1}_{\tilde{\omega} = 0} (T_{n+1}-T_n)\right] \quad \quad \text{Using \eqref{firstTerm}}\\
        &=&\E\left[\mathds{1}_{\tilde{\omega} \neq 0} \frac{1-e^{-(T_{n+1}-T_{n}) \tilde{\omega}}}{\tilde{\omega}} + \mathds{1}_{\tilde{\omega} = 0} (T_{n+1}-T_n)\right] \frac{1}{1-\E\left[\frac{\mu}{\mu + \tilde{\omega}}\right]}.\\
    \end{eqnarray*}
    Note that we can pull the infinite sum out of the expectation as the terms are all non negative. Further we can split the expectation of the product as the product of the expectations as the different values an edge takes are independent.
    We now compute 
    \begin{eqnarray*}
        \E\left[\mathds{1}_{\tilde{\omega} \neq 0}\frac{1-e^{-(T_{n+1}-T_{n}) \tilde{\omega}}}{\tilde{\omega}}\right] &=& \E\left[ \E[\mathds{1}_{\tilde{\omega} \neq 0}\frac{1-e^{-(T_{n+1}-T_{n}) \tilde{\omega}}}{\tilde{\omega}} | \tilde{\omega}]\right]\\
        &=&\E\left[\mathds{1}_{\tilde{\omega} \neq 0} \tilde{\omega}^{-1} - \frac{1}{\tilde{\omega}} \int_{0}^{\infty} \mu e^{-t (\tilde{\omega} + \mu)}dt \right]\\
        &=&\E\left[\mathds{1}_{\tilde{\omega} \neq 0} \tilde{\omega}^{-1} - \frac{1}{\tilde{\omega}} \frac{-\mu}{\mu + \tilde{\omega}}\right]
        =\E\left[\mathds{1}_{\tilde{\omega} \neq 0}\frac{1}{\mu + \tilde{\omega}}\right].\\
    \end{eqnarray*}
    This gives us 
    \begin{equation*}
        \E\left[\mathds{1}_{\tilde{\omega} \neq 0} \frac{1-e^{-(T_{n+1}-T_{n}) \tilde{\omega}}}{\tilde{\omega}} + \mathds{1}_{\tilde{\omega} = 0} (T_{n+1}-T_n)\right] = \E\left[\frac{1}{\mu + \tilde{\omega}}\right].\\
    \end{equation*}
    Further 
    \begin{eqnarray}\label{firstTerm}
        \E\left[e^{-(T_k-T_{k-1})\omega_{T_{k-1}}(0,1) }\right] &=& \P(\tilde{\omega} = 0) + \E\left[\E\left[\mathds{1}_{\tilde{\omega} \neq 0}e^{-(T_k-T_{k-1})\tilde{\omega} }|\tilde{\omega}\right]\right]\nonumber\\
        &=&\P(\tilde{\omega} = 0) + \E\left[\mathds{1}_{\tilde{\omega} \neq 0} \int_{0}^{\infty} \mu e^{-t \tilde{\omega}} e^{-\mu t} dt\right]\nonumber\\
        &=& \P(\tilde{\omega} = 0)+\E\left[\mathds{1}_{\tilde{\omega} \neq 0} \frac{\mu}{\mu+\tilde{\omega}}\right]= \E\left[\frac{\mu}{\mu+\tilde{\omega}}\right] \nonumber.\\
    \end{eqnarray}
    Altogether we get that 
    \begin{eqnarray*}
        \E[T] &=&\E\left[\frac{1}{\mu + \tilde{\omega}}\right]\frac{1}{1-\E\left[\frac{\mu}{\mu + \tilde{\omega}}\right]}
        =\frac{\E\left[\frac{1}{\mu + \tilde{\omega}}\right]}{\E\left[\frac{\tilde{\omega}}{\mu + \tilde{\omega}}\right]}.
    \end{eqnarray*}
\end{proof}
\subsection{The asymptotic speed of the one-dimensional NVBRW}
The goal is now to show that in one-dimension the speed of the NVBRW  $\hat{v}(\lambda,\mu)$ is converging exponentially fast in the bias $\lambda $ to the speed of the totally asymmetric process $v_A(\mu)$, for $\lambda \rightarrow +\infty$. \\
\begin{rmk}
    It is possible to define for the process totally asymmetric process $(\omega_t,A_t)_{t\geq 0}$ an infected set $(I^A_t)_{t\geq 0}$ and define then the regeneration times $\tau_{n+1} = \inf\{t \geq \tau_n: I^A_t = \emptyset \text{ and } \exists \tau_n \leq s \leq t: \; I^A_t \neq \emptyset\}$, we then have the same way as before $v_A = \frac{\E[A_{\tau_1}]}{\E[\tau_1]}$.
\end{rmk}
\begin{lem} \label{lem14}  Let $d=1$. 
    Let $d =1$, $\lambda >0$ and $\mu> 0$, then for $\tilde{\omega} \sim q$ and a constant $C_{\mu}$ that only depends on $\mu$ we have 
     \[0 \leq \frac{\E\left[\frac{\tilde{\omega}}{\mu + \tilde{\omega}}\right]}{\E\left[\frac{1}{\mu + \tilde{\omega}}\right]} - \hat{v}(\lambda, \mu) \leq C_{\mu} e^{-2\lambda}.\]
\end{lem}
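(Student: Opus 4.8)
The plan is to compare the one–dimensional NVBRW $(\eta_t,X_t)$ with the totally asymmetric NVBRW $(\omega_t,A_t)$ by couplings built on the infected–set construction. The first step is to reduce the statement to a bound on the difference of one regeneration displacement. In $d=1$ both processes attempt jumps at rate $\kappa$ and each attempt adds exactly one edge–copy to the infected set, so $(|I_t|)_{t\ge 0}$ is for both the birth–death chain with birth rate $\kappa$ and death rate $\mu i$; by the regeneration structure (cf. Lemma~\ref{regenerativSpeed} and Remark~\ref{remarkNormalization}(ii), and the analogous computation for $A$)
\[
v_A(\mu)=\frac{\E[A_{\tau_1}]}{\E[\tau_1]},\qquad \hat v(\lambda,\mu)=\frac{\E[X_{\tau_1}]}{\E[\tau_1]},\qquad \E[\tau_1]=\tfrac1\kappa e^{\kappa/\mu}=:c,
\]
so that $v_A(\mu)-\hat v(\lambda,\mu)=c^{-1}\bigl(\E[A_{\tau_1}]-\E[X_{\tau_1}]\bigr)$ and it suffices to bound $\E[A_{\tau_1}]-\E[X_{\tau_1}]$.

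For the upper bound I would couple the two infected–set constructions using a common rate-$\kappa$ clock $\mathcal P$ for the attempts, a common clock and common uniform indices for the removals, common auxiliary variables $U_t$ (direction) and $V_t$ (success), and coupled refreshed conductance values. Since the counting process $(|I_t|)$ never uses the labels $U_t$, this forces $|I^A_t|=|I^X_t|$ for all $t$, hence $\tau_1^A=\tau_1^X=:\tau$, and makes $N(\tau)$ (the number of attempts before $\tau$) independent of $(U_t)_t$. Writing $p_-:=e^{-\lambda}/(e^{\lambda}+e^{-\lambda})$ for the probability that a given attempt of $X$ is a $-e_1$-attempt, and letting $B$ be the event that at least one of the first $N(\tau)$ attempts is backward, a short induction along the attempts shows that on $B^c$ the two constructions stay perfectly synchronised — same walker position, same infected set, same conductance on every edge — since, as long as $X$ only attempts forward moves, it visits the same edges as $A$ in the same order, the same copies are added and removed and the same edges are refreshed at the same times; in particular $A_\tau=X_\tau$ on $B^c$. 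Using $|A_\tau|,|X_\tau|\le N(\tau)$ this gives
\[
\E[A_\tau]-\E[X_\tau]=\E\bigl[(A_\tau-X_\tau)\mathds{1}_B\bigr]\le 2\,\E\bigl[N(\tau)\mathds{1}_B\bigr].
\]

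To finish the upper bound I would note that, conditionally on $N(\tau)$, the event $B$ has probability $1-(1-p_-)^{N(\tau)}\le N(\tau)\,p_-$, whence $\E[N(\tau)\mathds{1}_B]\le p_-\,\E[N(\tau)^2]$; since $N(\tau)$ has an exponential tail (Lemma~\ref{deathbirthchain} with $\alpha=\kappa$, $L=1$), $\E[N(\tau)^2]<\infty$, and since $p_-\le e^{-2\lambda}$ we get $\E[A_\tau]-\E[X_\tau]\le 2e^{-2\lambda}\E[N(\tau)^2]$, hence $v_A(\mu)-\hat v(\lambda,\mu)\le C_\mu e^{-2\lambda}$ with $C_\mu=2c^{-1}\E[N(\tau)^2]$. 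For the lower bound $\hat v(\lambda,\mu)\le v_A(\mu)$ I would use a separate monotone coupling: realise one dynamical environment directly (independent rate-$\mu$ refresh clocks on the edges) and run both $A$ and $X$ on it with the same $\mathcal P,U,V$; if $X_{t^-}=A_{t^-}$ both walkers read the same conductance on the forward edge and move identically, while if $X_{t^-}<A_{t^-}$ the gap $A_t-X_t$ can decrease by at most one and a backward attempt of $X$ only increases it, so $X_t\le A_t$ for all $t$, and dividing by $t$ and letting $t\to\infty$ (both speeds exist by Remark~\ref{remarkNormalization}(i) and \eqref{totallyAsymetricSpeed}) yields the claim.

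The main obstacle is the synchronisation statement on $B^c$: one has to check carefully that the bookkeeping which drives the conductance refreshes — which copies sit in $I_t$, in which lexicographic order, and which ones trigger a refresh when removed — is genuinely identical for $A$ and for $X$ as long as $X$ has only moved forward, and that the joint construction of the two coupled processes indeed has the prescribed marginal laws. Once this is granted, the rest is bookkeeping together with the elementary second–moment estimate above.
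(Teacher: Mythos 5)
Your proposal is correct and follows essentially the same route as the paper: the same coupling of the NVBRW with the totally asymmetric walk via a common attempt clock and common $U,V$ variables, a bound of the discrepancy event by the chance of a backward attempt before regeneration (of order $e^{-\lambda}/(e^{\lambda}+e^{-\lambda})$ times a second moment of $N(\tau)$), and a separate monotone coupling on a single environment for the lower bound. The only differences are cosmetic: you keep the infected-set sizes matched past the first discrepancy so that $\tau^A=\tau^X$ exactly, and you use $\E[N(\tau)^2]$ where the paper writes $\E[N(\tau)]^2$, which if anything is the cleaner bookkeeping.
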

\begin{proof}
    Let $(\omega_t,Y_t)_{t \geq 0}$ be a NVBRW on dynamic random environment with parameter $\lambda >0$ and $\mu>0$.\\
    Further, let $(A_t)_{t \geq 0}$ be a totally asymmetric random walk on $(w_t)_{t \geq 0}$ a dynamical environment. We then couple $(\omega_t,Y_t)_{t\geq 0}$ and $(w_t,A_t)_{t\geq 0}$ in the following way. Let $w_0 = \omega_0$.\\
    Let $\mathcal{P}$ be a Poisson process of rate $\kappa$ then at the points of $\mathcal{P}$ both process will attempt a jump. 
    Take $t$ to be a point of $\mathcal{P}$ and sample $U_t \sim Unif[0, e^{\lambda} + e^{-\lambda}]$, then 
    \begin{enumerate}
        \item If $U_t \in [0, e^{\lambda})$ then $Y$ attempts a jump in direction $e_1$ otherwise in direction $-e_1$
        \item $A$ always attempts a jump in direction $e_1$.
    \end{enumerate}
    Let $V_t \sim Unif[0, \kappa]$ and assume $X$ attempts a jump in direction $e$ 
    \begin{enumerate}
        \item If $V_t \in [0, \omega_t({Y_{t^-}},Y_{t^-}+e)]$ then the process $Y$ jumps in direction $e$ in $t$.
        \item If $V_t \in [0, w_t({A_{t^-}},A_{t^-}+e_1)]$ then the process $A$ jumps to the right in $t$.
    \end{enumerate}
    Let $(I^A_t)_{t\geq 0}$ be the infected set of $A$ and $(I^Y_t)_{t\geq 0}$ be the infected set of $Y$. Then up to $S = \inf\{t \geq 0: A_t \neq Y_t\}$ we can decide to remove the edges in $I^A$ and $I^Y$ the same way, in order to have that $I^A_t = I^Y_t$. This way we can refresh the edge the edges in $\omega_t$ and $w_t$ at the same time points, and we decide to refresh them such that $\omega_t = w_t$ for all $t \in [0,S]$. At time $S$ we stop the coupling, then the $\omega_t$ and $w_t$ evolve independently, and $A$ and $Y$  continue jumping at the points of $\mathcal{P}$ but decide of the direction of the jump and the success using independent random variables for $t \geq S$. Let $\tau^A_1$ be the first regeneration time of $A$ and $\tau^Y_1$ be the first regeneration time of $Y$.\\
    We also note that in our construction $\tau^A_1$ and $\tau^Y_1$ do not depend on $\lambda$ anymore, as there transition rates are given by $q(i,i+1) = \kappa$ and $q(i+1,i) = \mu(i+1)$ for $i \geq 0$.\bigskip \\
    Now let $N(t_0) = |\{t \in \mathcal{P}: t \leq t_0\}|$ for $t_0 \geq 0$. \\
    Then 
    \begin{eqnarray*}
        \P(\tau^Y_1 \geq S) &\leq& \sum_{n=0}^{\infty}\P(N(\tau^Y_1)=n) (n \frac{ e^{-\lambda}}{e^{\lambda}+e^{-\lambda}})= \frac{ e^{-\lambda}}{e^{\lambda}+e^{-\lambda}} \E[N(\tau^Y_1)].
    \end{eqnarray*}
    Now recall that $\E[N(\tau_1)] < \infty$ by Remark \ref{remarkNormalization} so 
    \[\E[A_{\tau^A_1}-  Y_{\tau^Y_1}] \leq 0 \cdot \P(\tau^Y_1 < S) + (\E[N(\tau^A_1)] +\E[N(\tau^Y_1)])\P(\tau^Y_1 \geq S)  \leq 2 \frac{ e^{-\lambda}}{e^{\lambda}+e^{-\lambda}} \E[N(\tau^Y_1)]^2 .\]
    But \[v_A(\lambda,\mu) - \hat{v}(\lambda, \mu) = \frac{\E[A_{\tau^A_1}]}{\E[\tau_1^A]}- \frac{\E[Y_{\tau^Y_1}]}{\E[\tau_1^Y]}\] and by construction $\E[\tau_1^A] = \E[\tau_1^Y]$ so, 
    \[v_A(\lambda,\mu) - \hat{v}(\lambda, \mu) = \frac{\E[A_{\tau^A_1}-  Y_{\tau^Y_1}]}{\E[\tau_1^Y]} \leq \frac{2\E[N(\tau^Y_1)]^2}{\E[\tau_1^Y]} \frac{ e^{-\lambda}}{e^{\lambda}+e^{-\lambda}},  \]
    This way we have that $ v_A(\mu) - \hat{v}(\lambda, \mu) \leq C_{\mu} e^{-2\lambda}$ with $C_{\mu} = \frac{2\E[N(\tau^Y_1)]^2}{\E[\tau_1^Y]}$ that depends on $\mu$. Note that as $\E[N(\tau_1^Y)] = e^{\kappa/\mu} $, and $\E[\tau_1^Y] = \frac{1}{\kappa} e^{\kappa/\mu} $,we have that $C_{\mu}$ is decreasing in $\mu$.\bigskip\\
    Further we can couple $(A_t)_{t\geq 0 }$ to $(Y_t)_{t\geq 0}$ on the same dynamical environment in the following way: 
    Let $\mathcal{P}$ be as before, then for $t \in \mathcal{P}$ let $U_t \sim Unif[0, e^{\lambda} + e^{-\lambda}]$ and $V_t \sim Unif[0, \kappa]$.
    \begin{itemize}
        \item If $U_t \in [0, e^{\lambda})$ then $Y$ attempts a jump in direction $e_1$ it succeeds if $V_t \in [0, \omega_t({Y_{t^-}},Y_{t^-}+e_1)]$.
        \item If $U_t \geq e^{\lambda} $ then $Y$ attempts a jump in direction $-e_1$ it succeeds if $V_t \in [0, \omega_t({Y_{t^-}},Y_{t^-}-e_1)]$.
        \item $A$ always attempts a jump in direction $e_1$ it succeeds if $V_t \in [0, \omega_t({A_{t^-}},A_{t^-}+e_1)]$.
    \end{itemize}
    We then have $A_t\cdot e_1 \geq Y_t \cdot e_1$ for all $t \geq 0$, so
    \[
        v_A(\mu) - \hat{v}(\lambda, \mu) = \lim_{t \rightarrow \infty} \frac{A_t}{t} - \frac{Y_t}{t} \geq 0 \quad \text{a.s.}
    \]
\end{proof}
\begin{cor} \label{cor_v1} Let $d=1$. 
    Let $\lambda >0$, $\mu> 0$, and $\tilde{\omega} \sim q$, then for a constant $C_{\mu}$ that only depends on $\mu$ we have 
    \[0 \leq (e^{\lambda} + e^{-\lambda})  \frac{\E\left[\frac{\tilde{\omega}}{\mu + \tilde{\omega}}\right]}{\E\left[\frac{1}{\mu + \tilde{\omega}}\right]} - v(\lambda,\mu(e^{\lambda} + e^{-\lambda})) \leq C_{\mu} e^{-\lambda}.\]
\end{cor}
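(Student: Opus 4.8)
The plan is to deduce this immediately from Lemma~\ref{lem14} together with the normalization identity of Lemma~\ref{speedNVvsV}, after observing that in dimension $d=1$ the constant $Z_\lambda$ takes a particularly simple form. First I would note that for $d=1$ we have $Z_\lambda = e^\lambda + e^{-\lambda} + 2d - 2 = e^\lambda + e^{-\lambda}$, so Lemma~\ref{speedNVvsV} reads
\[
v\bigl(\lambda, \mu(e^\lambda + e^{-\lambda})\bigr) = (e^\lambda + e^{-\lambda})\,\hat v(\lambda,\mu).
\]
In particular the middle quantity in the claimed inequality equals
\[
(e^\lambda + e^{-\lambda})\left(\frac{\E[\tilde\omega/(\mu+\tilde\omega)]}{\E[1/(\mu+\tilde\omega)]} - \hat v(\lambda,\mu)\right),
\]
which reduces the statement to Lemma~\ref{lem14} scaled by the positive factor $e^\lambda + e^{-\lambda}$.

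Next I would invoke Lemma~\ref{lem14}, which gives
\[
0 \leq \frac{\E[\tilde\omega/(\mu+\tilde\omega)]}{\E[1/(\mu+\tilde\omega)]} - \hat v(\lambda,\mu) \leq C_\mu e^{-2\lambda},
\]
with $C_\mu$ depending only on $\mu$. Multiplying the three terms by $e^\lambda + e^{-\lambda} > 0$ preserves the inequalities, so the lower bound $0$ is immediate, and the upper bound becomes $(e^\lambda + e^{-\lambda}) C_\mu e^{-2\lambda} = C_\mu(e^{-\lambda} + e^{-3\lambda})$. Since $e^{-3\lambda} \leq e^{-\lambda}$ for $\lambda > 0$, this is at most $2 C_\mu e^{-\lambda}$, and replacing $C_\mu$ by $2C_\mu$ (still a function of $\mu$ only) yields the asserted bound $C_\mu e^{-\lambda}$.

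There is essentially no serious obstacle here: the corollary is a direct rescaling of Lemma~\ref{lem14} via Lemma~\ref{speedNVvsV}. The only point requiring a moment's care is bookkeeping of the constant — tracking that the extra factor $e^\lambda + e^{-\lambda}$ only worsens the exponential rate from $e^{-2\lambda}$ to $e^{-\lambda}$ (not to something non-decaying), and that the resulting constant still depends on $\mu$ alone. One should also make sure that the $\lambda$-dependence has been handled consistently, i.e.\ that the $C_\mu$ from Lemma~\ref{lem14} is genuinely $\lambda$-free, which it is by its explicit form $C_\mu = 2\E[N(\tau_1^Y)]^2/\E[\tau_1^Y]$ with $\E[N(\tau_1^Y)] = e^{\kappa/\mu}$ and $\E[\tau_1^Y] = \tfrac{1}{\kappa}e^{\kappa/\mu}$.
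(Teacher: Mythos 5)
Your proposal is correct and follows essentially the same route as the paper: apply Lemma \ref{speedNVvsV} with $Z_\lambda = e^{\lambda}+e^{-\lambda}$ in $d=1$ and then rescale Lemma \ref{lem14}. The only cosmetic difference is that the paper keeps the identical constant $C_\mu$ (since the proof of Lemma \ref{lem14} actually yields the sharper bound $C_\mu e^{-\lambda}/(e^{\lambda}+e^{-\lambda})$), whereas you work from the stated $C_\mu e^{-2\lambda}$ and absorb a harmless factor $2$ into the constant, which is perfectly fine for the statement as given.
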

\begin{proof}
    Use Lemma \ref{speedNVvsV} \[v(\lambda,\mu(e^{\lambda} + e^{-\lambda})) =Z_{\lambda} \hat{v}(\lambda, \mu) =(e^\lambda + e^{-\lambda}) \hat{v}(\lambda, \mu), \] 
    then using the previous Lemma \ref{lem14} yields the claim. Note that the constant $C_{\mu}$ is here the same as in Lemma \ref{lem14}.
\end{proof}
\subsection{Coupling a high dimensional VBRW to a one-dimensional VBRW}
Let $d \geq 2$ and $m = \E[\tilde{\omega}]$, for $\tilde{\omega} \sim q$.\\
We want to construct$(\omega_t,X_t)_{t \geq 0}$ be a VBRW on dynamical conductances on $\zd$ with parameter  $\lambda, \mu >0$, and $(w_t,Y_t)_{t \geq 0}$ be a VBRW on dynamical conductances on $\mathbb{Z}$ with parameter  $\lambda, \tilde{\mu} = \mu + m(2d-2) >0$. Let $X_0 = Y_0 = 0$, $\omega_0$ is distributed according to $\Q$, and $\omega_t$ refreshes on each edge independently at rate $\mu$ and according to the measure $q$.\\
The edges to the left of $0$ refresh in $w_t$ according to a Poisson process at rate $\tilde{\mu}$ and according to $q$.
Let $\mathcal{P}_1,\mathcal{P}_2,\mathcal{P}_3$ be $3$ independent Poisson point processes with respective parameter $e^{\lambda}\kappa$, $e^{-\lambda} \kappa$ and $(2d-2)\kappa$.\\
\begin{enumerate}
    \item At points of $\mathcal{P}_1$ both $X$ and $Y$ attempt a jump in direction $e_1$, and we add the corresponding edge to the infected set $(I^X_t)_{t \geq 0}$ and $(I^Y_t)_{t \geq 0}$. We then say that the 2 copies of the edge are a match.
    \item At points of $\mathcal{P}_2$ both $X$ and $Y$ attempt a jump in direction $-e_1$, and we add the corresponding edge to the infected set $(I^X_t)_{t \geq 0}$ and $(I^Y_t)_{t \geq 0}$. We then say that the 2 copies of the edge are a match.
    \item At points of $\mathcal{P}_3$ $X$ attempts a jump in of the $(2d-2)$ other directions, and we add the edge only to the infected set $(I^X_t)_{t \geq 0}$.
\end{enumerate}
Let $(T_i)_{i \in \mathbb{N}}$ be the points of $\mathcal{P}_3$ and let $S$ be the first point of $\mathcal{P}_2$. We stop the coupling at time $T_2 \wedge S$.\bigskip \\
Now we want to explain how to remove copies of edges in the infected sets.\\
When we pick a copy of an edge to be removed of $(I^X_t)_{t \geq 0}$ (following the definition of an infected set), we also remove its match in $(I^Y_t)_{t \geq 0}$. Then we updated the conductances in $(\omega_t)_{t\geq 0}$ and $(w_t)_{t \geq 0}$ as in the definition of the infected sets.\bigskip \\
We next focus on coupling the environments $(\omega_t)_{t \geq 0}$ and $(w_t)_{t \geq 0}$.
Let $E(e)$ be the first time the edge $e$ is examined by $Y$ and $C(e)$ the fist time it is crossed by $Y$. 
\begin{itemize}
    \item If $E(e) \leq T_1 \wedge S$, then for all $s \in [E(e),C(e)\wedge T_1 \wedge S)$ we set $w_s(e) = \omega_s(e)$.
    \item If $E(e) \in (T_1,T_2 \wedge S)$, then for all $s \in [E(e),C(e)\wedge T_2 \wedge S)$ we set $w_s(e) = \omega_s(X_S + e_1)$.
    \item If $E(e) \leq T_1 \wedge S$ and $C(e) > T_1 \wedge S$, then for all $s \in [E(e),C(e))$ we set $w_s(e) = \omega_s(e)$ and for all $s \in [E(e),C(e)\wedge T_2 \wedge S)$ we set $w_s(e) = \omega_s(X_s,X_s + e_1)$.
    \item For $s \in (C(e)\wedge T_2 \wedge S,T_2 \wedge S)$, we refresh the edge $e$ in the environment $(w_t)_{t\geq 0}$ also at the points of a Poisson process with rate $m(2d-2)$, these updates do not affect the infected set.
\end{itemize}
After when we say that we stop the coupling, we let $(X_t)_{t\geq 0}$ and $(Y_t)_{t\geq 0}$ attempt jumps in $\mathcal{P}_1$, $\mathcal{P}_2$, and $\mathcal{P}_3$ as previously and each copy of edges $e_{i,j}$ in $(I^Y_t)_{t \geq 0}$ also refreshes at the points of a Poisson process $\mathcal{\tilde{P}}_{i,j}$ with rate $m(2d-2)$. If $e_{i,j}$ get refreshed at $t \in \mathcal{\tilde{P}}_{i,j}$  we do not remove it from $I^Y_t$ but if $j=1$ we resample the value of $w_t(e_i)$.\\

Let $(\tau_i)_{i \geq 0}$ be the successive times at which $I^X$ becomes $\emptyset$, then because of how we have constructed $(I^X_t)_{t\geq0}$ and $(I^Y_t)_{t\geq0}$ we have that $I^Y_{\tau_i} = \emptyset$ for all $i \in \mathbb{N}$.

\begin{rmk}
    Note that in this coupling once an edge is examined by $Y$ it refreshes at rate $\tilde{\mu}$. Indeed, up to $T_1$ (that happens at rate $2d-2$) the edge updates at rate $\mu$. Now suppose at time $T_1$ $X$ attempts a jump in direction $e_i$ then it succeeds with probability $\omega_{T_1}(X_{T_1^-},X_{T_1^-}+e_i)$, so under $\P$ the $X$ jumps in another direction then $e_1, -e_1$ at rate $m(2d-2)$. If for an edge $e$ (and there will be exactly one) $X$ jumps in another direction then $e_1, -e_1$ between $E(e) \wedge S$ and $C(e) \wedge S$ then $w(e)$ updates according $q$ as the value $\omega_{T_1}(X_{T_1}, X_{T_1} + e_1)$ is still unknown and distributed according to $q$. So after an edge is examined in $w$ it updates at rate $\tilde{\mu}$.\\
    Further, using the regeneration sequence $(\tau_i)_{i \geq 0}$ we get $v^Y$ the speed of $Y$ as 
    \[v^Y(\lambda,\tilde{\mu}) = \frac{\E[Y_{\tau_1}]}{\E[\tau_1]}.\]
\end{rmk}

\begin{lem} \label{Lemma_dto1}
    Let $\mu >0$ then for all $\lambda >0$ we have that 
    \[|v^Y(\lambda, Z_{\lambda}\mu + m(2d-2)) - v(\lambda, Z_{\lambda}\mu) | \leq \tilde{C}_{\mu} e^{-\lambda},\]
    with $\tilde{C}_{\mu}$ a constant that only depends on $\mu$.
\end{lem}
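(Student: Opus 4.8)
The plan is to read off both speeds from the common regeneration sequence of the coupling constructed above and then to bound the discrepancy that appears once the coupling is stopped. Instantiate that coupling with base refreshing rate $Z_\lambda\mu$ in place of $\mu$, so that $X$ refreshes at rate $Z_\lambda\mu$ on $\zd$ and $Y$ at rate $Z_\lambda\mu+m(2d-2)$ on $\mathbb{Z}$, and let $(\tau_i)_{i\ge1}$ be the successive times at which $I^X$ — equivalently $I^Y$ — becomes empty. By Lemma \ref{regenerativSpeed} applied to $X$ and by the remark preceding this lemma applied to $Y$,
\[
v(\lambda,Z_\lambda\mu)=\frac{\E[X_{\tau_1}\cdot e_1]}{\E[\tau_1]},\qquad v^Y(\lambda,Z_\lambda\mu+m(2d-2))=\frac{\E[Y_{\tau_1}]}{\E[\tau_1]},
\]
with the \emph{same} denominator $\E[\tau_1]$. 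Thus it suffices to bound $|\E[X_{\tau_1}\cdot e_1]-\E[Y_{\tau_1}]|$ from above and $\E[\tau_1]$ from below. For the denominator I would use the birth--death computation of Lemma \ref{finiteTau}: since $(|I^X_t|)_{t\ge0}$ has birth rate $Z_\lambda\kappa$ and death rate $Z_\lambda\mu\cdot i$, one gets $\E[\tau_1]=\frac{1}{Z_\lambda\kappa}e^{\kappa/\mu}$, hence $1/\E[\tau_1]=Z_\lambda\kappa\,e^{-\kappa/\mu}\le 2d\kappa\,e^{-\kappa/\mu}\,e^{\lambda}$ for every $\lambda>0$.

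For the numerator I would first use that, by construction of the coupling, $X_t\cdot e_1=Y_t$ for every $t<T_2\wedge S$; hence $X_{\tau_1}\cdot e_1-Y_{\tau_1}$ vanishes on $\{\tau_1<T_2\wedge S\}$, and on the complement each of $|X_{\tau_1}\cdot e_1|$ and $|Y_{\tau_1}|$ is at most the number $N^X(\tau_1)$ of jump attempts of $X$ before $\tau_1$, so
\[
|\E[X_{\tau_1}\cdot e_1]-\E[Y_{\tau_1}]|\le 2\,\E\bigl[N^X(\tau_1)\,\mathds{1}_{\{\tau_1\ge T_2\wedge S\}}\bigr].
\]
Now each jump attempt is, independently, of type $\mathcal{P}_1,\mathcal{P}_2,\mathcal{P}_3$ with probabilities $e^{\lambda}/Z_\lambda$, $e^{-\lambda}/Z_\lambda$, $(2d-2)/Z_\lambda$, and $N^X(\tau_1)$ — a functional of the attempt and refresh times alone — is independent of these type labels. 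The event $\{\tau_1\ge T_2\wedge S\}$ forces either a $\mathcal{P}_2$-attempt or at least two $\mathcal{P}_3$-attempts before $\tau_1$, so conditionally on $N^X(\tau_1)=n\ (\ge1)$ its probability is at most $ne^{-\lambda}/Z_\lambda+\binom n2\bigl((2d-2)/Z_\lambda\bigr)^2\le C_d\,n^2 e^{-2\lambda}$, using $Z_\lambda\ge e^{\lambda}$. Summing over $n$ against the law of $N^X(\tau_1)$ gives $\E\bigl[N^X(\tau_1)\mathds{1}_{\{\tau_1\ge T_2\wedge S\}}\bigr]\le C_d\,e^{-2\lambda}\,\E[N^X(\tau_1)^3]$.

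Finally I would note that $\E[N^X(\tau_1)^3]$ is finite and uniform in $\lambda$: the embedded chain of $(|I^X_t|)_{t\ge0}$ has upward probability $\kappa/(\kappa+\mu i)$ at level $i$, which is independent of $\lambda$ because birth and death rates both carry the factor $Z_\lambda$, so $N^X(\tau_1)$ has a $\lambda$-free law with an exponential tail by Lemmas \ref{deathbirthchain} and \ref{finiteTau}, whence $\E[N^X(\tau_1)^3]=c(\kappa,\mu)<\infty$. Combining the three bounds yields $|v^Y(\lambda,Z_\lambda\mu+m(2d-2))-v(\lambda,Z_\lambda\mu)|\le \tilde C_\mu e^{-\lambda}$ with $\tilde C_\mu$ depending only on $\mu$. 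I expect the main obstacle to be exactly this numerator estimate: a crude Cauchy--Schwarz bound $\sqrt{\E[N^X(\tau_1)^2]\,\P(\tau_1\ge T_2\wedge S)}$ only gives $O(e^{-\lambda})$ for the numerator, which is wiped out by dividing by $\E[\tau_1]=\Theta(e^{-\lambda})$; one genuinely needs the sharper $O(e^{-2\lambda})$, so $\E[N^X(\tau_1)\mathds{1}_{\{\tau_1\ge T_2\wedge S\}}]$ must be computed directly by conditioning on $N^X(\tau_1)$, and the $\lambda$-independence of the infected-set dynamics after factoring out $Z_\lambda$ is what makes the moment $\E[N^X(\tau_1)^3]$ uniformly finite.
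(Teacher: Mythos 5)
Your proposal is correct and follows essentially the same route as the paper: the common regeneration structure of the coupling gives both speeds with the same denominator $\E[\tau_1]\asymp Z_\lambda^{-1}$, the discrepancy is supported on the event $\{S\le\tau_1\}\cup\{T_2\le\tau_1\}$, and the Poisson-colouring/binomial bound conditional on $N(\tau_1)$ (whose law is $\lambda$-free because both birth and death rates of $|I^X_t|$ carry the factor $Z_\lambda$) yields the crucial $O(e^{-2\lambda})$ numerator estimate. The only difference is bookkeeping: your bound uses $\E[N(\tau_1)^3]$ where the paper keeps $\E[N(\tau_1)^2]$, which is immaterial since all moments of $N(\tau_1)$ are finite and independent of $\lambda$.
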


\begin{proof}
    Let $(\omega_t,X_t)_{t \geq 0}$ be a VBRW on dynamical conductances on $\zd$ with parameter  $\lambda, \Zl \mu >0$, and $(w_t,Y_t)_{t \geq 0}$ be a VBRW on dynamical conductances on $\mathbb{Z}$ with parameter  $\lambda, \tilde{\mu} = \Zl \mu + m(2d-2) >0$. We couple $(\omega_t,X_t)_{t \geq 0}$ and $(w_t,Y_t)_{t \geq 0}$ as above.
    Take \[A = \{S < \tau_1\} \cup \{T_2 < \tau_1\}.\]
    Then we have that 
    \[|v^Y(\lambda, Z_{\lambda}\mu + m(2d-2)) - v(\lambda, Z_{\lambda}\mu) | \leq \frac{1}{\E[\tau_1]} \E[|X_{\tau_1}\cdot e_1 - Y_{\tau_1}| \mathds{1}_A].\]
    Let $N(t) = |\{0\leq s \leq t: s \in \mathcal{P}_1 \cup \mathcal{P}_2 \cup \mathcal{P}_3 \}|$, then we get
    \[\E[|X_{\tau_1}\cdot e_1 - Y_{\tau_1}| \mathds{1}_A] \leq 2 \E[\mathds{1}_A N(\tau_1)].\]
    Further we have that the number of points in $\mathcal{P}_2[0,\tau_1]$ is binomial with parameter $(N(\tau_1), e^{-\lambda}Z_{\lambda}^{-1})$ and the number of points in $\mathcal{P}_3[0,\tau_1]$ is binomial with parameter $(N(\tau_1), (2d-2)Z_{\lambda}^{-1})$, we then have that 
    \begin{eqnarray*}
        \E[\mathds{1}_A N(\tau_1)] &\leq& \E[N(\tau_1)\mathds{1}_{\{S < \tau_1\}} ] + \E[N(\tau_1)\mathds{1}_{\{T_2 < \tau_1\}} ]\\
        &=& \E\left[N(\tau_1)(1-(1-e^{-\lambda}Z_{\lambda}^{-1})^{N(\tau_1)})\right] \\
        &&\quad + \E\left[N(\tau_1)\left( (1-(1-(2d-2)Z_{\lambda}^{-1})^{N(\tau_1)})  + N(\tau_1)(2d-2) Z_{\lambda}^{-1}(1-(2d-2) Z_{\lambda}^{-1})^{N(\tau_1)-1}\right) \right],
    \end{eqnarray*}
    with $(1-x)^a \geq 1-ax$ for all $a \in \mathbb{N}$ and $x\in(0,1)$ we can simplify this to
    \begin{eqnarray*}
        \E[\mathds{1}_A N(\tau_1)] &\leq& \E\left[ N(\tau_1)^2e^{-\lambda}Z_{\lambda}^{-1} +N(\tau_1) (N(\tau_1) -1) (2d-2)^2 Z_{\lambda}^{-2}\right] \leq 2d e^{-2\lambda} \E[N(\tau_1)^2].
    \end{eqnarray*}
    But by Lemma \ref{finiteTau} we have that $\E[N(\tau_1)^2] < \infty$. Further, as the $|I_t|_{t\geq0}$ has birth rate $\kappa \Zl$ and death rate $\mu \Zl |I_t|$ this way $N(\tau_1)$ is not depending on $\lambda$, and $\E[N(\tau_1)^2]$ is a constant depending on $\mu$.\\
    Next we look at $\E[\tau_1]$ which depends on $\lambda$ so in the following we will write $\tau_1^{\lambda}$ for the regeneration time corresponding to the process with parameter $\lambda$ and $\mu \Zl$. We then have that \[\E[\tau_1^{\lambda}] = Z_{\lambda}^{-1} \E[\tau_1^{1}],\]
    where we have that $\E[\tau_1^{1}]$ is a finite constant depending on $\mu$. \\
    Altogether we get  for some constant $\tilde{C}_{\mu} = \frac{\E[N(\tau_1)^2]}{\E[\tau_1^{1}]} $ depending on $\mu$ 
    \[|v^Y(\lambda, Z_{\lambda}\mu + m(2d-2)) - v(\lambda, Z_{\lambda}\mu) | \leq \frac{1}{\E[\tau_1]} \E[|X_{\tau_1}\cdot e_1 - Y_{\tau_1}| \mathds{1}_A] \leq \tilde{C}_{\mu} e^{-\lambda}.\]
\end{proof}

\begin{thm}\label{assymptotics}
    Let $\lambda >0$, $\mu > 0$ and $\tilde{\omega}$ a random variable distributed according to $q$, then 
    \[v(\lambda, Z_{\lambda}\mu) = e^{\lambda}\frac{\E\left[\frac{\tilde{\omega}}{\mu + \tilde{\omega}}\right]}{\E\left[\frac{1}{\mu + \tilde{\omega}}\right]} + (2d-2) \frac{\E\left[\frac{\tilde{\omega}(\tilde{\omega} - m)}{(\mu + \tilde{\omega})^2}\right]\E\left[ \frac{1}{\mu + \tilde{\omega}}\right] -\E\left[ \frac{\tilde{\omega}-m}{(\mu + \tilde{\omega})^2}\right] \E\left[\frac{\tilde{\omega}}{\mu + \tilde{\omega}}\right]}{\E\left[\frac{1}{\mu + \tilde{\omega}}\right]^2} + O(e^{-\lambda}) ,\]
    with $m = \E[\tilde{\omega}]$ as before.
\end{thm}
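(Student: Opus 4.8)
The plan is to obtain the asymptotic expansion of $v(\lambda, Z_\lambda \mu)$ by chaining together the two coupling estimates already established and then computing the $d$-dimensional one-dimensional-auxiliary speed explicitly up to order $e^{-\lambda}$. Concretely, Lemma \ref{Lemma_dto1} gives $|v^Y(\lambda, Z_\lambda\mu + m(2d-2)) - v(\lambda, Z_\lambda\mu)| \leq \tilde C_\mu e^{-\lambda}$, so it suffices to expand $v^Y(\lambda, Z_\lambda\mu + m(2d-2))$, the speed of the one-dimensional VBRW $Y$ with refreshing rate $\tilde\mu = Z_\lambda\mu + m(2d-2)$. Since $Y$ is one-dimensional, I would pass to its normalized version: by Lemma \ref{speedNVvsV}, $v^Y(\lambda, Z_\lambda \nu) = Z_\lambda \hat v^Y(\lambda,\nu)$ where $\nu$ is chosen so that $Z_\lambda \nu = Z_\lambda \mu + m(2d-2)$, i.e. $\nu = \mu + m(2d-2)Z_\lambda^{-1}$. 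Then Lemma \ref{lem14} applied with refreshing parameter $\nu$ gives $\hat v^Y(\lambda,\nu) = \dfrac{\E[\tilde\omega/(\nu+\tilde\omega)]}{\E[1/(\nu+\tilde\omega)]} + O(e^{-2\lambda})$, so $v^Y(\lambda,Z_\lambda\mu + m(2d-2)) = Z_\lambda \cdot \dfrac{\E[\tilde\omega/(\nu+\tilde\omega)]}{\E[1/(\nu+\tilde\omega)]} + Z_\lambda O(e^{-2\lambda})$. Since $Z_\lambda = e^\lambda + e^{-\lambda} + 2d-2$, the error term $Z_\lambda O(e^{-2\lambda})$ is $O(e^{-\lambda})$, which is absorbed into the claimed remainder.

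The remaining work is an elementary but careful Taylor expansion in the small parameter $\delta := m(2d-2)Z_\lambda^{-1}$, which satisfies $\delta = m(2d-2)e^{-\lambda} + O(e^{-2\lambda})$. I would write $\Phi(\nu) := \dfrac{\E[\tilde\omega/(\nu+\tilde\omega)]}{\E[1/(\nu+\tilde\omega)]}$ and expand $\Phi(\mu + \delta) = \Phi(\mu) + \delta\,\Phi'(\mu) + O(\delta^2)$. Differentiating the ratio, $\tfrac{d}{d\nu}\tfrac{\tilde\omega}{\nu+\tilde\omega} = -\tfrac{\tilde\omega}{(\nu+\tilde\omega)^2}$ and $\tfrac{d}{d\nu}\tfrac{1}{\nu+\tilde\omega} = -\tfrac{1}{(\nu+\tilde\omega)^2}$, the quotient rule gives
\[
\Phi'(\mu) = \frac{-\E\!\left[\tfrac{\tilde\omega}{(\mu+\tilde\omega)^2}\right]\E\!\left[\tfrac{1}{\mu+\tilde\omega}\right] + \E\!\left[\tfrac{\tilde\omega}{\mu+\tilde\omega}\right]\E\!\left[\tfrac{1}{(\mu+\tilde\omega)^2}\right]}{\E\!\left[\tfrac{1}{\mu+\tilde\omega}\right]^2}.
\]
Then $Z_\lambda\,\Phi(\mu+\delta) = Z_\lambda\Phi(\mu) + Z_\lambda\delta\,\Phi'(\mu) + Z_\lambda O(\delta^2)$. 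Here $Z_\lambda\delta = m(2d-2)$ exactly, $Z_\lambda O(\delta^2) = O(e^{-\lambda})$, and $Z_\lambda\Phi(\mu) = (e^\lambda + e^{-\lambda} + 2d-2)\Phi(\mu) = e^\lambda\Phi(\mu) + (2d-2)\Phi(\mu) + O(e^{-\lambda})$. Collecting terms, $v(\lambda,Z_\lambda\mu) = e^\lambda\Phi(\mu) + (2d-2)\big(\Phi(\mu) + m\,\Phi'(\mu)\big) + O(e^{-\lambda})$.

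It remains to check that $(2d-2)(\Phi(\mu) + m\Phi'(\mu))$ equals the stated coefficient of $(2d-2)$, namely $\dfrac{\E[\tilde\omega(\tilde\omega-m)/(\mu+\tilde\omega)^2]\E[1/(\mu+\tilde\omega)] - \E[(\tilde\omega-m)/(\mu+\tilde\omega)^2]\E[\tilde\omega/(\mu+\tilde\omega)]}{\E[1/(\mu+\tilde\omega)]^2}$. Writing everything over the common denominator $\E[1/(\mu+\tilde\omega)]^2$ and using $\Phi(\mu)\E[1/(\mu+\tilde\omega)] = \E[\tilde\omega/(\mu+\tilde\omega)]$, one verifies $\Phi(\mu)\E[1/(\mu+\tilde\omega)]^2 + m\,\E[\tilde\omega/(\mu+\tilde\omega)]\E[1/(\mu+\tilde\omega)^2] - m\,\E[\tilde\omega/(\mu+\tilde\omega)]\E[1/(\mu+\tilde\omega)]$\dots; more transparently, one uses $\tilde\omega = (\mu+\tilde\omega) - \mu$ to rewrite $\E[\tilde\omega/(\mu+\tilde\omega)] = 1 - \mu\E[1/(\mu+\tilde\omega)]$ and similar identities so that the numerator $\E[1/(\mu+\tilde\omega)]^2\Phi(\mu) - m\E[(\tilde\omega-m)/(\mu+\tilde\omega)^2 \cdot \text{(stuff)}]$ collapses; alternatively, directly expand $\E[\tilde\omega(\tilde\omega-m)/(\mu+\tilde\omega)^2] = \E[\tilde\omega^2/(\mu+\tilde\omega)^2] - m\E[\tilde\omega/(\mu+\tilde\omega)^2]$ and $\E[(\tilde\omega-m)/(\mu+\tilde\omega)^2] = \E[\tilde\omega/(\mu+\tilde\omega)^2] - m\E[1/(\mu+\tilde\omega)^2]$, substitute, and match against $\Phi(\mu)\E[1/(\mu+\tilde\omega)]^2 + m(\Phi'(\mu)$-numerator$)$. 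I expect the main obstacle to be exactly this final algebraic identification: one must be scrupulous about whether the $(2d-2)\Phi(\mu)$ piece (which comes from the $2d-2$ summand inside $Z_\lambda$, not from the $\delta$-expansion) has been correctly merged with the $(2d-2)m\Phi'(\mu)$ piece, since it is easy to double-count or drop it. A secondary point requiring care is justifying that all the $O(\cdot)$ remainders are genuinely uniform — in particular that $\Phi$ is $C^2$ near $\mu$ with bounds depending only on $\mu$, which holds because $\tilde\omega \in [0,\kappa]$ makes $1/(\mu+\tilde\omega) \in [1/(\mu+\kappa), 1/\mu]$ bounded — and that the error from Lemma \ref{lem14}, after multiplication by $Z_\lambda$, is controlled uniformly in $\lambda \geq 1$.
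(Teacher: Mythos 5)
Your overall plan (reduce to the one--dimensional auxiliary walk via Lemma \ref{Lemma_dto1}, then Taylor expand the explicit one--dimensional formula) is the paper's plan, but there is a genuine error in your normalization step, and it changes the answer. Writing, as you do, $\Phi(\nu)=\E\left[\tfrac{\tilde\omega}{\nu+\tilde\omega}\right]/\E\left[\tfrac{1}{\nu+\tilde\omega}\right]$: when you set $v^Y(\lambda,Z_\lambda\nu)=Z_\lambda\hat v^Y(\lambda,\nu)$ with $\nu=\mu+m(2d-2)Z_\lambda^{-1}$, you are applying Lemma \ref{speedNVvsV} to the walk $Y$ on $\mathbb{Z}$ with the $d$-dimensional constant $Z_\lambda=e^{\lambda}+e^{-\lambda}+2d-2$. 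For a one-dimensional walk the normalizing constant is $e^{\lambda}+e^{-\lambda}$ (this is exactly what Corollary \ref{cor_v1} encodes), so the rate that must be fed into $\Phi$ is
\[
\frac{Z_\lambda\mu+m(2d-2)}{e^{\lambda}+e^{-\lambda}}=\mu+(m+\mu)\frac{2d-2}{e^{\lambda}+e^{-\lambda}},
\]
and the prefactor is $e^{\lambda}+e^{-\lambda}$, not $Z_\lambda$. Your version loses the $\mu$-part of the shift (you obtain $m\Phi'(\mu)$ where $(m+\mu)\Phi'(\mu)$ should appear), and since $Z_\lambda\Phi(\mu)=e^{\lambda}\Phi(\mu)+(2d-2)\Phi(\mu)+O(e^{-\lambda})$ it also injects a spurious constant $(2d-2)\Phi(\mu)$. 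Your constant term is thus $(2d-2)\bigl(\Phi(\mu)+m\Phi'(\mu)\bigr)$, whereas the coefficient in the theorem equals $(2d-2)(m+\mu)\Phi'(\mu)$ (this is the unnumbered lemma following Corollary \ref{corAssymtoticNVBRW}; it follows from $\tfrac{1}{\mu+\tilde\omega}=\tfrac{\tilde\omega}{(\mu+\tilde\omega)^2}+\tfrac{\mu}{(\mu+\tilde\omega)^2}$ and $\tfrac{\tilde\omega^2}{(\mu+\tilde\omega)^2}=\tfrac{\tilde\omega}{\mu+\tilde\omega}-\tfrac{\mu\tilde\omega}{(\mu+\tilde\omega)^2}$). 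The two differ by $(2d-2)\bigl(\Phi(\mu)-\mu\Phi'(\mu)\bigr)$, which is nonzero in general: for $q=\delta_1$ the walk is a homogeneous biased walk with speed exactly $e^{\lambda}-e^{-\lambda}$, so the constant term must vanish, while your formula predicts $2d-2$. Hence the ``final algebraic identification'' you defer cannot be carried out; the mismatch is not algebra but the wrong normalizing constant.

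The repair is what the paper does: write $v^1\bigl(\lambda,Z_\lambda\mu+m(2d-2)\bigr)=v^1\bigl(\lambda,(e^{\lambda}+e^{-\lambda})\cdot\tfrac{Z_\lambda}{e^{\lambda}+e^{-\lambda}}\bigl(\mu+m\tfrac{2d-2}{Z_\lambda}\bigr)\bigr)$, apply Corollary \ref{cor_v1} with the rate $\tfrac{\mu+m\delta}{1-\delta}$, $\delta=\tfrac{2d-2}{Z_\lambda}$, and expand $(1-\delta)\Phi\bigl(\tfrac{\mu+m\delta}{1-\delta}\bigr)$ to first order in $\delta$; the $(m+\mu)$ arises because $\tfrac{\mu+m\delta}{1-\delta}=\mu+(m+\mu)\delta+O(\delta^{2})$. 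Your remaining points of care (uniformity of the error constants, using that $C_{\mu}$ is decreasing in $\mu$ and that $\tilde\omega\in[0,\kappa]$ makes $\Phi$ smooth with controlled derivatives near $\mu$) are fine and are handled the same way in the paper.
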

\begin{proof}
    First note that the process $Y$ is constructed such that \[v^Y(\lambda, Z_{\lambda}\mu + m(2d-2)) = v^1(\lambda,Z_{\lambda}\mu + m(2d-2)).\] 
    This is because starting at the time an edge is examined, it will refresh at rate $Z_{\lambda}\mu + m(2d-2)$ and so the process will behave like a one-dimensional random walk on dynamical conductances with parameter $\lambda$ and $Z_{\lambda}\mu + m(2d-2)$. Before the time an edge is examined it may refresh at rate $Z_{\lambda}\mu$, but that is irrelevant as we still have that the value of the conductance at the first examination time is distributed according to $q$.\\
    Using Lemma \ref{Lemma_dto1} we have that for $\tilde{C}_{\mu}$ a constant depending on $\mu$ we have 
    \[|v^1(\lambda, Z_{\lambda}\mu + m(2d-2)) - v(\lambda, Z_{\lambda}\mu) | \leq \tilde{C}_{\mu} e^{-\lambda}.\]
    But \[v^1\left(\lambda,Z_{\lambda}\mu + m(2d-2)\right) = v^1\left(\lambda,(e^{\lambda} + e^{-\lambda})\cdot \frac{Z_{\lambda}}{e^{\lambda} + e^{-\lambda}}\left(\mu + m\frac{2d-2}{Z_{\lambda}}\right)\right).\] 
    Using Corollary \ref{cor_v1} 
    \[0 \leq (e^{\lambda} + e^{-\lambda})  \frac{\E\left[\frac{\tilde{\omega}}{\frac{Z_{\lambda}}{e^{\lambda} + e^{-\lambda}}\left(\mu + m\frac{2d-2}{Z_{\lambda}}\right)+ \tilde{\omega}}\right]}{\E\left[\frac{1}{\frac{Z_{\lambda}}{e^{\lambda} + e^{-\lambda}}\left(\mu + m\frac{2d-2}{Z_{\lambda}}\right) + \tilde{\omega}}\right]} - v^1\left(\lambda,Z_{\lambda}\mu + m(2d-2)\right) \leq C_{\frac{Z_{\lambda}}{e^{\lambda} + e^{-\lambda}}\left(\mu + m\frac{2d-2}{Z_{\lambda}}\right)} e^{-\lambda}.\]
    Let $\delta = \delta(\lambda) = \frac{2d-2}{Z_{\lambda}}$ then we have $\delta(\lambda) \rightarrow 0$ for $\lambda \rightarrow \infty$ and recalling that $C_{\mu}$ is decreasing in $\mu$ we can rewrite the equation above as 
    \[0 \leq (2d-2)\frac{1-\delta}{\delta}  \frac{\E\left[\frac{\tilde{\omega}}{\frac{\mu+m\delta}{1-\delta}+ \tilde{\omega}}\right]}{\E\left[\frac{1}{\frac{\mu+m\delta}{1-\delta} + \tilde{\omega}}\right]} - v^1\left(\lambda,\left((2d-2)\frac{1-\delta}{\delta}\right)\left(\frac{\mu+m\delta}{1-\delta}\right)\right) \leq C_{\mu} e^{-\lambda}.\]
    Then the triangular inequality yields
    \[\left|v(\lambda, Z_{\lambda}\mu) - (2d-2)\frac{1-\delta}{\delta}  \frac{\E\left[\frac{\tilde{\omega}}{\frac{\mu+m\delta}{1-\delta}+ \tilde{\omega}}\right]}{\E\left[\frac{1}{\frac{\mu+m\delta}{1-\delta} + \tilde{\omega}}\right]} \right| \leq (\tilde{C}_{\mu} + C_{\mu}) e^{-\lambda},\]
    so
    \[v(\lambda, Z_{\lambda}\mu) =   (2d-2)\frac{1-\delta}{\delta}  \frac{\E\left[\frac{\tilde{\omega}}{\frac{\mu+m\delta}{1-\delta}+ \tilde{\omega}}\right]}{\E\left[\frac{1}{\frac{\mu+m\delta}{1-\delta} + \tilde{\omega}}\right]}  + O(e^{-\lambda}).\]
    We now Taylor expand the expression $(1-\delta)\frac{\E\left[\frac{\tilde{\omega}}{\frac{\mu+m\delta}{1-\delta}+ \tilde{\omega}}\right]}{\E\left[\frac{1}{\frac{\mu+m\delta}{1-\delta} + \tilde{\omega}}\right]}$ in $\delta \rightarrow 0$ using that we may exchange the expectation and differentiation in $\delta$ by monotone convergence.
    \begin{equation}\label{AssymptoticsRescalled}
        (1-\delta)\frac{\E\left[\frac{\tilde{\omega}}{\frac{\mu+m\delta}{1-\delta}+ \tilde{\omega}}\right]}{\E\left[\frac{1}{\frac{\mu+m\delta}{1-\delta} + \tilde{\omega}}\right]} = 
        \frac{\E\left[\frac{\tilde{\omega}}{\mu + \tilde{\omega}}\right]}{\E\left[\frac{1}{\mu + \tilde{\omega}}\right]}
        + \frac{\E\left[\frac{\tilde{\omega}(\tilde{\omega} - m)}{(\mu + \tilde{\omega})^2}\right]\E\left[ \frac{1}{\mu + \tilde{\omega}}\right] -\E\left[ \frac{\tilde{\omega}-m}{(\mu + \tilde{\omega})^2}\right] \E\left[\frac{\tilde{\omega}}{\mu + \tilde{\omega}}\right] - \E\left[\frac{\tilde{\omega}}{\mu + \tilde{\omega}}\right] \E\left[\frac{1}{\mu + \tilde{\omega}}\right]}{\E\left[\frac{1}{\mu + \tilde{\omega}}\right]^2} \delta + O(\delta^2).
    \end{equation}

    Now using that $O(\delta^2) = O(e^{-2\lambda})$ we have that, 
    \begin{eqnarray*}
        v(\lambda, Z_{\lambda}\mu) &=& Z_{\lambda} \frac{\E\left[\frac{\tilde{\omega}}{\mu + \tilde{\omega}}\right]}{\E\left[\frac{1}{\mu + \tilde{\omega}}\right]} + \frac{2d-2}{\delta} \delta \frac{\E\left[\frac{\tilde{\omega}(\tilde{\omega} - m)}{(\mu + \tilde{\omega})^2}\right]\E\left[ \frac{1}{\mu + \tilde{\omega}}\right] -\E\left[ \frac{\tilde{\omega}-m}{(\mu + \tilde{\omega})^2}\right] \E\left[\frac{\tilde{\omega}}{\mu + \tilde{\omega}}\right] - \E\left[\frac{\tilde{\omega}}{\mu + \tilde{\omega}}\right] \E\left[\frac{1}{\mu + \tilde{\omega}}\right]}{\E\left[\frac{1}{\mu + \tilde{\omega}}\right]^2} + O(e^{-\lambda})\\
        &=& Z_{\lambda} \frac{\E\left[\frac{\tilde{\omega}}{\mu + \tilde{\omega}}\right]}{\E\left[\frac{1}{\mu + \tilde{\omega}}\right]} + (2d-2) \frac{\E\left[\frac{\tilde{\omega}(\tilde{\omega} - m)}{(\mu + \tilde{\omega})^2}\right]\E\left[ \frac{1}{\mu + \tilde{\omega}}\right] -\E\left[ \frac{\tilde{\omega}-m}{(\mu + \tilde{\omega})^2}\right] \E\left[\frac{\tilde{\omega}}{\mu + \tilde{\omega}}\right] - \E\left[\frac{\tilde{\omega}}{\mu + \tilde{\omega}}\right] \E\left[\frac{1}{\mu + \tilde{\omega}}\right]}{\E\left[\frac{1}{\mu + \tilde{\omega}}\right]^2} + O(e^{-\lambda})\\
        &=& e^{\lambda}\frac{\E\left[\frac{\tilde{\omega}}{\mu + \tilde{\omega}}\right]}{\E\left[\frac{1}{\mu + \tilde{\omega}}\right]} + (2d-2) \frac{\E\left[\frac{\tilde{\omega}(\tilde{\omega} - m)}{(\mu + \tilde{\omega})^2}\right]\E\left[ \frac{1}{\mu + \tilde{\omega}}\right] -\E\left[ \frac{\tilde{\omega}-m}{(\mu + \tilde{\omega})^2}\right] \E\left[\frac{\tilde{\omega}}{\mu + \tilde{\omega}}\right]}{\E\left[\frac{1}{\mu + \tilde{\omega}}\right]^2} + O(e^{-\lambda}).\\
    \end{eqnarray*} 
\end{proof}
\begin{cor}\label{corAssymtoticNVBRW}
    Let $\lambda >0$, $\mu > 0$ and $\tilde{\omega}$ a random variable distributed according to $q$ and $m = \E[\tilde{\omega}]$ as before.
    \[\hat{v}(\lambda,\mu) = \frac{\E\left[\frac{\tilde{\omega}}{\mu + \tilde{\omega}}\right]}{\E\left[\frac{1}{\mu + \tilde{\omega}}\right]}
    + (2d-2)\frac{\E\left[\frac{\tilde{\omega}(\tilde{\omega} - m)}{(\mu + \tilde{\omega})^2}\right]\E\left[ \frac{1}{\mu + \tilde{\omega}}\right] -\E\left[ \frac{\tilde{\omega}-m}{(\mu + \tilde{\omega})^2}\right] \E\left[\frac{\tilde{\omega}}{\mu + \tilde{\omega}}\right] - \E\left[\frac{\tilde{\omega}}{\mu + \tilde{\omega}}\right] \E\left[\frac{1}{\mu + \tilde{\omega}}\right]}{\E\left[\frac{1}{\mu + \tilde{\omega}}\right]^2} e^{-\lambda} + O(e^{-2\lambda}). \]
\end{cor}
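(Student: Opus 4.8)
The plan is to reduce the statement to Theorem \ref{assymptotics} through the exact scaling identity of Lemma \ref{speedNVvsV} and then perform an elementary expansion in $e^{-\lambda}$. By Lemma \ref{speedNVvsV} we have $\hat v(\lambda,\mu) = v(\lambda, Z_\lambda\mu)/Z_\lambda$ with $Z_\lambda = e^\lambda + e^{-\lambda} + 2d - 2$, so the corollary amounts to dividing the expansion furnished by Theorem \ref{assymptotics} by $Z_\lambda$ and retaining the terms up to order $e^{-2\lambda}$.

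Concretely, abbreviate the leading coefficient in Theorem \ref{assymptotics} by $B := \E[\tilde\omega/(\mu+\tilde\omega)]/\E[1/(\mu+\tilde\omega)]$ and its constant term by $C$, so that $v(\lambda, Z_\lambda\mu) = e^\lambda B + C + O(e^{-\lambda})$. First I would write $Z_\lambda^{-1} = e^{-\lambda}\bigl(1 + (2d-2)e^{-\lambda} + e^{-2\lambda}\bigr)^{-1}$ and expand the bracketed factor geometrically,
\[
\bigl(1 + (2d-2)e^{-\lambda} + e^{-2\lambda}\bigr)^{-1} = 1 - (2d-2)e^{-\lambda} + O(e^{-2\lambda}),
\]
noting that the $O(e^{-\lambda})$ remainder in Theorem \ref{assymptotics}, once multiplied by $Z_\lambda^{-1} = O(e^{-\lambda})$, contributes only $O(e^{-2\lambda})$. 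Multiplying out then yields
\[
\hat v(\lambda,\mu) = B + \bigl(C - (2d-2)B\bigr)e^{-\lambda} + O(e^{-2\lambda}).
\]

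The remaining step is purely algebraic: I would verify that $C - (2d-2)B$ equals the coefficient of $e^{-\lambda}$ displayed in the statement. Writing both fractions over the common denominator $\E[1/(\mu+\tilde\omega)]^2$, the subtracted term $-(2d-2)B$ contributes exactly the additional summand $-\E[\tilde\omega/(\mu+\tilde\omega)]\,\E[1/(\mu+\tilde\omega)]$ in the numerator, which is precisely the expression asserted in Corollary \ref{corAssymtoticNVBRW}; the leading term $B$ is manifestly the one displayed there. I do not expect a genuine obstacle in this argument: the only points needing a little care are the bookkeeping of the error orders under division by $Z_\lambda$, which is of order $e^\lambda$, and ensuring that no order-$e^{-\lambda}$ contribution is lost when expanding $Z_\lambda^{-1}$.
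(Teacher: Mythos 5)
Your proposal is correct and follows essentially the same route as the paper: both deduce the corollary from Theorem \ref{assymptotics} via the scaling identity $\hat{v}(\lambda,\mu) = v(\lambda,Z_{\lambda}\mu)/Z_{\lambda}$ of Lemma \ref{speedNVvsV}, with the remainder $O(e^{-\lambda})$ of the theorem becoming $O(e^{-2\lambda})$ after division by $Z_{\lambda}$. The only cosmetic difference is that the paper reuses the intermediate $\delta$-expansion \eqref{AssymptoticsRescalled} from the theorem's proof, whereas you expand $Z_{\lambda}^{-1}$ geometrically from the theorem's final statement; your coefficient $C-(2d-2)B$ matches the displayed one.
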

\begin{proof}
    First note that $\hat{v}(\lambda,\mu) = \frac{v(\lambda, Z_{\lambda}\mu)}{Z_{\lambda}}$ in the notation of the previous proof we get that \[\hat{v}(\lambda,\mu) =\frac{\delta v(\lambda, Z_{\lambda}\mu)}{2d-2}.\]
    Then we get the claim using equation \eqref{AssymptoticsRescalled}.
\end{proof}
As last thing in this section we will give an alternative representation of the first order term of the Taylor expression in Theorem \ref{assymptotics}. This representation will be useful later because it will simplify some computations.
\begin{lem}
    \begin{eqnarray*}
        &&\frac{\E\left[\frac{\tilde{\omega}(\tilde{\omega} - m)}{(\mu + \tilde{\omega})^2}\right]\E\left[ \frac{1}{\mu + \tilde{\omega}}\right] -\E\left[ \frac{\tilde{\omega}-m}{(\mu + \tilde{\omega})^2}\right] \E\left[\frac{\tilde{\omega}}{\mu + \tilde{\omega}}\right] - \E\left[\frac{\tilde{\omega}}{\mu + \tilde{\omega}}\right] \E\left[\frac{1}{\mu + \tilde{\omega}}\right]}{\E\left[\frac{1}{\mu + \tilde{\omega}}\right]^2} \\
        &=& (m+\mu)\frac{\E\left[\frac{1}{(\mu + \tilde{\omega})^2}\right]\E\left[ \frac{\tilde{\omega}}{\mu + \tilde{\omega}}\right] -\E\left[ \frac{\tilde{\omega}}{(\mu + \tilde{\omega})^2}\right] \E\left[\frac{1}{\mu + \tilde{\omega}}\right] }{\E\left[\frac{1}{\mu + \tilde{\omega}}\right]^2} - \frac{\E\left[\frac{\tilde{\omega}}{\mu + \tilde{\omega}}\right]}{\E\left[\frac{1}{\mu + \tilde{\omega}}\right]}.
    \end{eqnarray*}
\end{lem}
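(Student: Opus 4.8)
\emph{Proof plan.} The plan is to reduce the identity to a handful of elementary moment relations for $\tilde\omega\sim q$ and then expand both sides. Fix $\mu>0$ and write
\[
a = \E\!\left[\tfrac{1}{\mu+\tilde\omega}\right],\qquad b = \E\!\left[\tfrac{\tilde\omega}{\mu+\tilde\omega}\right],\qquad c = \E\!\left[\tfrac{1}{(\mu+\tilde\omega)^2}\right],\qquad g = \E\!\left[\tfrac{\tilde\omega}{(\mu+\tilde\omega)^2}\right],
\]
and recall $m=\E[\tilde\omega]$. The three algebraic facts I would use are all immediate from $\tilde\omega = (\mu+\tilde\omega)-\mu$: namely $\frac{\tilde\omega}{\mu+\tilde\omega}=1-\frac{\mu}{\mu+\tilde\omega}$, hence $b=1-\mu a$; $\frac{\tilde\omega}{(\mu+\tilde\omega)^2}=\frac{1}{\mu+\tilde\omega}-\frac{\mu}{(\mu+\tilde\omega)^2}$, hence $g=a-\mu c$; and $\frac{\tilde\omega^2}{(\mu+\tilde\omega)^2}=1-\frac{2\mu}{\mu+\tilde\omega}+\frac{\mu^2}{(\mu+\tilde\omega)^2}$, hence $\E[\tilde\omega^2/(\mu+\tilde\omega)^2]=1-2\mu a+\mu^2 c$. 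These contain all the cancellation that makes the statement work.

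Next I would rewrite the two numerators in the claim. From the facts above, $\E\!\left[\frac{\tilde\omega(\tilde\omega-m)}{(\mu+\tilde\omega)^2}\right]=1-2\mu a+\mu^2 c-m(a-\mu c)$ and $\E\!\left[\frac{\tilde\omega-m}{(\mu+\tilde\omega)^2}\right]=a-(\mu+m)c$. Substituting these, together with $b=1-\mu a$, into the left-hand side numerator $\E[\tilde\omega(\tilde\omega-m)/(\mu+\tilde\omega)^2]\,a-\E[(\tilde\omega-m)/(\mu+\tilde\omega)^2]\,b-ba$ and collecting terms, the $a$-terms contribute $-a$, the $a^2$-terms contribute $-ma^2$, the $ac$-terms cancel exactly, and the remaining terms give $(\mu+m)c$; dividing by $a^2$ yields that the left-hand side equals $\dfrac{(\mu+m)c}{a^2}-\dfrac{1}{a}-m$.

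Finally I would simplify the right-hand side in the same way: its first numerator is $cb-ga=c(1-\mu a)-(a-\mu c)a=c-a^2$, so the first term equals $(m+\mu)\frac{c-a^2}{a^2}=(m+\mu)\frac{c}{a^2}-(m+\mu)$, while the last term is $-b/a=-\frac{1-\mu a}{a}=-\frac1a+\mu$; adding these gives $\dfrac{(m+\mu)c}{a^2}-m-\dfrac1a$, the same expression obtained for the left-hand side, which proves the identity. There is no genuine obstacle beyond bookkeeping; the only step needing care is tracking the $ac$-terms on the left so as to see that they cancel, which is automatic once the three elementary identities above are recorded. (One should also note $a>0$ since $q\neq\delta_0$ is not relevant here — $1/(\mu+\tilde\omega)$ is bounded below by a positive constant when $\tilde\omega\le\kappa$ — so the divisions by $a$ and $a^2$ are legitimate.)
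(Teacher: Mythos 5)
Your computation is correct: with $a=\E[1/(\mu+\tilde\omega)]$, $b=\E[\tilde\omega/(\mu+\tilde\omega)]$, $c=\E[1/(\mu+\tilde\omega)^2]$, $g=\E[\tilde\omega/(\mu+\tilde\omega)^2]$, the identities $b=1-\mu a$, $g=a-\mu c$, $\E[\tilde\omega^2/(\mu+\tilde\omega)^2]=1-2\mu a+\mu^2 c$ do reduce both sides to $(m+\mu)c/a^2-1/a-m$, and the cancellation of the $ac$-terms and of $cb-ga=c-a^2$ checks out. The paper states this lemma without any proof (it is pure algebra used only to simplify later computations), so your argument simply supplies the omitted verification; the only cosmetic point is the final parenthetical, which is garbled as written — the correct remark is just that $a\geq 1/(\mu+\kappa)>0$ since $q$ is supported in $[0,\kappa]$, so dividing by $a$ and $a^2$ is legitimate.
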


\begin{rmk}
    We see an important change of behavior in the last line regarding whether the process is a normalized time change of the variable speed process, compared to the real variable speed process. The second does not have the negative term anymore. It seems this way that the normalized variable speed process and the variable speed process behave asymptotically in a fundamentally different way.
\end{rmk}

\section{Monotonicity of the speed of NVBRW}
\subsection{Monotonicity of VBRW in $d=1$}
    \begin{thm}\label{Monotonmularge}
        Let $\lambda >0$ and $\varepsilon >0$ then for all $\mu >0 $ 
        \[v^1(\lambda,\mu)\leq v^1(\lambda + \varepsilon,\mu).\]
    \end{thm}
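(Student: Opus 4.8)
The plan is to prove monotonicity of the one‑dimensional VBRW speed in the bias by a coupling argument at the level of the alternative construction in Subsection~\ref{alternativeRep}, exploiting the fact that in $d=1$ the walk on a fixed environment is nearest‑neighbour and can be realised monotonically.

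First I would fix $\lambda>0$, $\varepsilon>0$ and $\mu>0$, and build on a single probability space a VBRW $X$ with bias $\lambda$ and a VBRW $Y$ with bias $\lambda+\varepsilon$, both started at $0$, driven by a common dynamical environment $(\omega_t)_{t\ge 0}$ with refresh rate $\mu$ and a common Poisson clock. The point is to use the representation with attempted jumps: in $d=1$ one attempts a jump in direction $e_1$ at rate $e^{\lambda}\kappa$ and in direction $-e_1$ at rate $e^{-\lambda}\kappa$ (resp.\ with $\lambda+\varepsilon$ for $Y$). Realise all clocks on a master Poisson process of rate $e^{\lambda+\varepsilon}\kappa$ with i.i.d.\ marks: every point is a ``right‑attempt'' for $Y$; it is additionally a right‑attempt for $X$ with probability $e^{\lambda}/e^{\lambda+\varepsilon}=e^{-\varepsilon}$; and one overlays an independent rate‑$e^{-\lambda}\kappa$ process of ``left‑attempts'' shared by both walks (note $e^{-\lambda}\ge e^{-(\lambda+\varepsilon)}$, so $X$'s left clock is the faster one — this is the delicate sign). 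At each attempt both walks use the same uniform $V\sim\mathrm{Unif}[0,\kappa]$ against the conductance of the edge they are currently trying to cross, so a right‑attempt succeeds for $Y$ whenever it succeeds for $X$ on the same edge, and a left‑attempt succeeds for both on the same edge.

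The key step is then to show that this coupling can be arranged so that $Y_t\cdot e_1\ge X_t\cdot e_1$ for all $t$, whence $v^1(\lambda,\mu)=\lim_t X_t/t\le\lim_t Y_t/t=v^1(\lambda+\varepsilon,\mu)$ by Lemma~\ref{regenerativSpeed}. The natural attempt is an induction on the attempt times maintaining $Y_t\ge X_t$: at a shared right‑attempt the two walkers are on possibly different edges, but if $Y>X$ there is at least one unit of slack, and a success for $X$ cannot make $X$ overtake $Y$ unless $Y$ simultaneously fails to move right while $X$ moves right and $Y=X+1$; at a shared left‑attempt both may move left and the gap is preserved; at a right‑attempt that is ``$Y$ only'' the gap only widens. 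The honest difficulty is the case $Y=X$: a shared right‑attempt keeps $Y=X$ (same edge, same $V$), a ``$Y$‑only'' right‑attempt opens the gap in the right direction, but a shared left‑attempt at state $Y=X$ keeps them equal, and the genuinely dangerous event is when the walks have drifted apart, $Y=X+1$, and a left‑attempt succeeds for $Y$ but — because $Y$ and $X$ are trying to cross \emph{different} edges — fails for $X$, giving $Y_t=X_t$; one must check this never produces $Y<X$, and more importantly that the asymmetry $e^{-\lambda}>e^{-(\lambda+\varepsilon)}$ does not let $X$'s extra left‑moves accumulate. I expect the cleanest fix is to couple the left‑attempts so that $X$ uses the full rate $e^{-\lambda}\kappa$ while $Y$ uses a thinned subset of rate $e^{-(\lambda+\varepsilon)}\kappa$, and to pair each ``$X$‑only'' left‑attempt with the next ``$Y$‑only'' right‑attempt, or alternatively to argue via the regeneration structure on an excursion‑by‑excursion basis rather than pathwise.

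The main obstacle is precisely this last point: because both the jump rates \emph{and} the environment‑discovery speed change with $\lambda$, a naive pathwise monotone coupling on a common environment is not obviously available, and one likely needs either (i) a more clever coupling that simultaneously speeds up nothing but re‑routes the extra right‑attempts of $Y$, proving $Y_t\ge X_t$ by a careful case analysis on the relative positions and the identity of the edges being crossed, or (ii) a reduction to the totally asymmetric one‑dimensional object: write $v^1(\lambda,\mu)=v^1(\lambda,\mu)$ via Lemma~\ref{speedNVvsV} in terms of $\hat v$, use the regeneration representation $v^1=\E[X_\tau]/\E[\tau]$, and compare the numerators and denominators for biases $\lambda$ and $\lambda+\varepsilon$ under a coupling of the \emph{infected‑set} dynamics (which, for the NVBRW normalisation, do not depend on the bias — Remark~\ref{remarkNormalization}(ii)). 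I would pursue route (ii): couple the two NVBRWs so that they attempt jumps at the same times and see the same environment until the first regeneration, deduce $\E[\tau]$ is common, and show $\E[X_\tau]$ is monotone in $\lambda$ by the same sub‑martingale/optional‑stopping idea used in Lemma~\ref{submartVBRW} and Theorem~\ref{expectationPositiv}, now applied to the \emph{difference} of the two coupled walks rather than to a single walk — that difference should be a submartingale in $\lambda$ because increasing the bias only reweights right‑ versus left‑attempts in favour of the right.
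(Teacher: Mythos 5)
Your route (i) is, after your own ``cleanest fix'', exactly the paper's proof: run both walks on one and the same realization of the dynamical environment, give them a common right-attempt clock of rate $\kappa e^{\lambda}$, a $Y$-only right clock of rate $\kappa(e^{\lambda+\varepsilon}-e^{\lambda})$, a common left clock of rate $\kappa e^{-(\lambda+\varepsilon)}$, an $X$-only left clock of rate $\kappa(e^{-\lambda}-e^{-(\lambda+\varepsilon)})$, and the same uniform $V$ at every attempt; in $d=1$ nearest-neighbour jumps plus the fact that at equality both walkers probe the \emph{same} edge with the same $V$ immediately give $X^{\lambda}_t\le X^{\lambda+\varepsilon}_t$ for all $t$, and the speeds compare. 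But you do not commit to this and you do not execute it. As written, your master coupling is wrong: making the full rate-$e^{-\lambda}\kappa$ left clock ``shared by both walks'' gives $Y$ left-attempt rate $e^{-\lambda}\kappa$ instead of $e^{-(\lambda+\varepsilon)}\kappa$, so $Y$ does not have the law of a bias-$(\lambda+\varepsilon)$ VBRW; the thinning you later suggest repairs this, but then no ``pairing of $X$-only left attempts with $Y$-only right attempts'' is needed --- an $X$-only left attempt can only decrease $X$, which preserves the order. Also, the ``environment-discovery'' obstacle you invoke to abandon the pathwise coupling is illusory here: the environment is an autonomous Markov process, so both walks may simply be driven by the same $(\omega_t)_{t\ge0}$; no infected-set bookkeeping enters this proof at all.

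The route you actually choose, (ii), has a genuine gap. First, Lemma \ref{speedNVvsV} converts $v^1(\lambda,\mu)$ into $Z_\lambda\,\hat v^1(\lambda,\mu/Z_\lambda)$, so the NVBRW comparison you need is between parameters $(\lambda,\mu/Z_\lambda)$ and $(\lambda+\varepsilon,\mu/Z_{\lambda+\varepsilon})$ --- the refresh rate changes with the bias, which you ignore. Second, the key claim that under the common-jump-times coupling of Subsection \ref{couplingDiffBias} the difference $Y_{\tau_1}\cdot e_1-X_{\tau_1}\cdot e_1$ is nonnegative in expectation ``because increasing the bias only reweights attempts to the right'' is precisely the statement to be proved, not an argument: in that coupling the two walkers separate after the first very bad point and thereafter cross different edges, so the submartingale computation of Lemma \ref{submartVBRW} (which concerns a single walk against its own reflected increments) does not transfer to the difference. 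Indeed the paper only establishes this inequality for $\mu$ large enough, and Theorem \ref{NonIncreasing} shows that for the NVBRW the speed can be asymptotically decreasing in $\lambda$, so the monotonicity you want cannot follow from such soft reweighting considerations alone; the one-dimensional, fixed-$\mu$, unnormalized setting and the specific pathwise coupling are what make Theorem \ref{Monotonmularge} true.
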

    \begin{proof}
        Let $(\omega_t)_{t \geq 0}$ be dynamical conductances on $\mathbb{Z}$. Then we construct a coupling using the alternative representation of the processes, between a VBRW $(X_t^{\lambda})_{t\geq0}$ with parameter $\lambda,\mu$ and a VBRW $(X_t^{\lambda+\varepsilon})_{t\geq0}$ with parameter $\lambda+\varepsilon,\mu$, such that for all $t \geq 0$, $X_t^{\lambda +\varepsilon} \geq X_t^{\lambda}$.
        Take $\mathcal{P}$ to be the points of a Poisson process with rate $\kappa (e^{\lambda + \varepsilon}+e^{-\lambda})$. Then for $t \in \mathcal{P}$ we sample an independent uniform random variable $U \sim \text{Unif}([0,(e^{\lambda + \varepsilon}+e^{-\lambda})])$.
        \begin{itemize}
            \item If $U \in [0, e^{\lambda})$ then both processes attempt a jump to the right (direction $e_1$). 
            \item If $U \in [e^{\lambda}, e^{\lambda + \varepsilon})$ then only $X^{\lambda + \varepsilon}$ attempts a jump to the right (direction $e_1$).
            \item If $U \in [e^{\lambda + \varepsilon}, (e^{\lambda + \varepsilon} + e^{-(\lambda + \varepsilon)}))$ then both processes attempt a jump to the left (direction $-e_1$).
            \item If $U \geq (e^{\lambda + \varepsilon} + e^{-(\lambda + \varepsilon)})$ then only $X^{\lambda}$ attempts a jump to the left (direction $-e_1$).
        \end{itemize}
        Then sample an independent uniform random variable $V \sim \text{Unif}([0,\kappa])$.\\
        \begin{itemize}
            \item If $X^{\lambda}$ is attempting a jump in direction $e$ it succeeds if and only if $ V \leq \omega_t(X^{\lambda}_t,X^{\lambda}_t + e)$.
            \item If $X^{\lambda + \varepsilon}$ is attempting a jump in direction $e$ it succeeds if and only if $ V \leq \omega_t(X^{\lambda+ \varepsilon}_t,X^{\lambda+ \varepsilon}_t + e)$.
        \end{itemize}
        In this coupling we have that $X^{\lambda}_t \leq X^{\lambda+\varepsilon}_t$ for all $t \geq 0$ a.s.
    \end{proof}
\subsection{A coupling between NVBRW with different biases} \label{couplingDiffBias}
    Let $\kappa > 0 $ and $q$ a measure on $([0,\kappa],\mathcal{B}([0,\kappa]))$ and $Q = q^{E(\zd)}$ the product measure of $q$ on all the edges of $\zd$. In the following we will assume that the conductances in our models will all be distributed according to $Q$.\bigskip \\
    Let $(\eta_t,X_t)_{t\geq 0}$ be a NVBRW on dynamical conductances with parameter $\lambda > 0$ and $\mu > 0$. Further, for $\varepsilon >0$ we let $(\nu_t,Y_t)_{t\geq 0}$ be a NVBRW on dynamical conductances with parameter $\lambda + \varepsilon$ and $\mu$. The goal of this section is to show that for $\mu$ large enough $\hat{v}(\lambda + \varepsilon,\mu) \geq \hat{v}(\lambda,\mu)$.\bigskip\\
    Let $\omega$ be distributed according to $q$, then $m = \E[\omega]$. Recall that by our assumptions $m >0$. \bigskip\\
    We now want to couple $(\eta_t,X_t)$ with $(\nu_t,Y_t)$. We denote by $(I^X_t)_{t \geq 0}$ the infected set of $X$ and by $(I^Y_t)_{t \geq 0}$ the infected set of $Y$. The coupling is similar to the coupling presented in the case of dynamical percolation in \cite{percolation}. \\
    Let $X_0 =Y_0=0$ and $\mathcal{P}$ be a Poisson process with rate $\kappa$. The points of $\mathcal{P}$ are the points at which both process will attempt jumps. More precisely for $t \in \mathcal{P}$ let $U$ be a uniform random variable on $[0,1]$
    \begin{itemize}
        \item[(1)] If $U < (2d-2)/Z_{\lambda+\varepsilon}$ then $X$ and $Y$ both attempt a jump in one of the $2d-2$ direction $e$ other than $e_1,-e_1$. (The direction is then chosen uniformly at random).
        \item[(2)] If $U\in[(2d-2)/Z_{\lambda+\varepsilon},(2d-2)/Z_{\lambda})$ then $X$ attempts a jump in one of the $2d-2$ direction $e$ other than $e_1,-e_1$ and $Y$ attempts a jump on direction $e_1$.
        \item[(3)] If $U\in[(2d-2)/Z_{\lambda},(2d-2)/Z_{\lambda}+e^{-(\lambda+\varepsilon)}/Z_{\lambda+\varepsilon})$ then both process attempt a jump in direction $-e_1$.
        \item[(4)] If $U\in[(2d-2)/Z_{\lambda}+e^{-(\lambda+\varepsilon)}/Z_{\lambda+\varepsilon},1-e^{\lambda})$ then $X$ attempts a jump in direction $-e_1$ and $Y$ attempts a jump in direction $e_1$.
        \item[(5)] If $U \geq 1-e^{\lambda}$ then $X$ and $Y$ both attempt a jump in direction $e_1$.
    \end{itemize}
    Next we split the points on $\mathcal{P}$ into three groups, to $t\in \mathcal{P}$ we say that 
    \begin{itemize}
        \item $t$ is a \textbf{good point} if in $t$ case (5) occurs, we denote the Poisson process of good points by $\mathcal{P}_g$, it has rate $r_g = \kappa e^{\lambda}/Z_{\lambda}$,
        \item $t$ is a \textbf{bad point} if in $t$ case (1) or (3) occurs, we denote the Poisson process of bad points by $\mathcal{P}_b$, it has rate $r_b = \kappa (2d-2+e^{-(\lambda+\varepsilon)})/Z_{\lambda+\varepsilon}$,
        \item $t$ is a \textbf{very bad point} if in $t$ case (2) or (4) occurs, we denote the Poisson process of very bad points by $\mathcal{P}_{v}$, it has rate $r_{v} = \kappa \left(e^{\lambda+\varepsilon}/Z_{\lambda+\varepsilon} -e^{\lambda}/Z_{\lambda} \right)$.
    \end{itemize}
    In the following we will write $(T_i)_{i\geq 1}$ for the points of $\mathcal{P}$, $(T^g_i)_{i\geq 1}$ for the points of $\mathcal{P}_g$, $(T^b_i)_{i\geq 1}$ for the points of $\mathcal{P}_b$, $(T^v_i)_{i\geq 1}$ for the points of $\mathcal{P}_v$
    Now assume that at time $t$ $X$ attempts a jump in direction $e$ and $Y$ attempts a jump in direction $\tilde{e}$. We will also say that the edge $\{X_{t^-},X_{t^-} + e\}$ is examined by $X$ and the edge $\{Y_{t^-},Y_{t^-} + \tilde{e}\}$ is examined by $Y$. Let $V$ be a uniform random variable on $[0,\kappa]$.
    \begin{itemize}
        \item If $V \in [0,\eta_t(X_{t^{-}},X_{t^{-}}+e)]$ then $X_t = X_{t^{-}}+e$.
        \item If $V \in [0,\nu_t(Y_{t^{-}},Y_{t^{-}}+\tilde{e})]$ then $Y_t = Y_{t^{-}}+\tilde{e}$.
    \end{itemize}
    If it is the first time that $X$ examines the edge $\{X_{t^-},X_{t^-} + e\}$ then we refresh that edge such that $\eta_t(X_{t^{-}},X_{t^{-}}+e) = \nu_t(X_{t^{-}},X_{t^{-}}+e)$. Note that the value of $\eta$ on an edge only has an influence on the process $X$ starting at the time the edge is first examined, and that at the value we assigned is distributed according to $q$. This way we have that updating the value of the edge the first time we examine it is not changing the behavior of $X$.\bigskip \\
    Further $I^X_t = I^X_{t^-} \cup \{\{X_{t^-},X_{t^-} + e\}\}$ and $I^Y_t = I^Y_{t^-} \cup \{\{Y_{t^-},Y_{t^-} + \tilde{e}\}\}$. As we add points the at the same time to $I^X$ and $I^Y$ we can define $\mathcal{R}$ a Poisson process with rate function $ |I^X| \mu$ then at each point of the process $\mathcal{R}$ we remove a copy of an edge of $I^X$ as well as of $I^Y$, this way we can ensure that $|I^X| = |I^Y|$, so both process will have the same regeneration times $(\tau_i)_{i\geq 1}$. \bigskip \\
    We stop the coupling at $T^v_1$. After $T^v_1$ $X$ and $Y$ continue to attempt jumps at points of $\mathcal{P}$, but they choose the direction in which they attempt a jump independently. Further, for $t \in \mathcal{R}$ with $t<T^v_1$ we remove a copy of the same edge in $I^X$ and $I^Y$, but for $t \geq T^v_1$ we chose the edge to remove in $I^X$ independently of the edge removed in $I^Y$. Now for $t \in \mathcal{R}$ with $t<T^v_1$ assume that the edge $e$ has to be refreshed at time $t$ in $\eta$ then it also has to be refreshed in $\nu$, we then sample $\omega$ according to $q$ and set $\eta(e) = \nu(e) = \omega$. This way we achieve that $X$ and $Y$ have the impression of running on the same environment up to time $T^v_1$. For $t \geq T^v_1$ we refresh the edges of $\nu$ and $\eta$ independently.
\subsection{Monotonicity for $\mu$ large enough}
    \begin{thm}\label{monotonemularge}
        For $\lambda>0$ then there exists $M\geq 0$ such that for $\mu >M$, $\varepsilon >0$ we have that
        \[\hat{v}(\lambda + \varepsilon,\mu) \geq \hat{v}(\lambda,\mu).\]
    \end{thm}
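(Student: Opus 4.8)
The plan is to run the coupling of Section~\ref{couplingDiffBias} between the NVBRW $(\eta_t,X_t)_{t\ge 0}$ with parameters $(\lambda,\mu)$ and the NVBRW $(\nu_t,Y_t)_{t\ge 0}$ with parameters $(\lambda+\varepsilon,\mu)$, and to compare the two walkers over one regeneration interval. By construction both walkers attempt jumps at the points of the same rate-$\kappa$ Poisson process $\mathcal{P}$, their infected sets satisfy $|I^X_t|=|I^Y_t|$ for all $t$, and hence they share the regeneration sequence $(\tau_i)_{i\ge 1}$; moreover the law of $(|I^X_t|)_{t\ge 0}$ (birth rate $\kappa$, death rate $\mu|I^X_t|$) does not depend on the bias, so $\E[\tau_1]$ is the same for both. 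By the regeneration representation of the speed (Lemma~\ref{regenerativSpeed} together with Remark~\ref{remarkNormalization}) this gives
\[
  \hat v(\lambda+\varepsilon,\mu)-\hat v(\lambda,\mu)=\frac{\E\big[(Y_{\tau_1}-X_{\tau_1})\cdot e_1\big]}{\E[\tau_1]},
\]
so it suffices to show that the numerator is nonnegative (in fact strictly positive) once $\mu$ is large.

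First I would check that the two walkers coincide before the first very bad point $T^v_1$: at good and bad points (cases (1), (3), (5)) both attempt a jump in the \emph{same} direction, the environments are coupled so that $\eta_t=\nu_t$ on every edge examined so far, and the same uniform variable $V$ decides the success of both attempted jumps; an induction on the jump attempts then yields $X_t=Y_t$ for all $t<T^v_1$. Since $\tau_1$ is a.s. not a point of $\mathcal{P}$, on $\{T^v_1\ge\tau_1\}$ we have $X_{\tau_1}=Y_{\tau_1}$, so this event contributes $0$ and we may work on $\{T^v_1<\tau_1\}$. There, using $X_{(T^v_1)^-}=Y_{(T^v_1)^-}$, split
\[
  (Y_{\tau_1}-X_{\tau_1})\cdot e_1=(Y_{T^v_1}-X_{T^v_1})\cdot e_1+\big[(Y_{\tau_1}-Y_{T^v_1})-(X_{\tau_1}-X_{T^v_1})\big]\cdot e_1 .
\]

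For the first (``gain'') term, at $T^v_1$ the walker $Y$ attempts a jump in direction $e_1$ while $X$ attempts a transversal jump or a jump in direction $-e_1$, so $(Y_{T^v_1}-Y_{(T^v_1)^-})\cdot e_1\in\{0,1\}$ and $(X_{(T^v_1)^-}-X_{T^v_1})\cdot e_1\in\{0,1\}$ and the gain is always $\ge(Y_{T^v_1}-Y_{(T^v_1)^-})\cdot e_1\ge 0$. To make this quantitative I would further intersect with $\{N(\tau_1)=1\}$, with $N(t)=|\mathcal{P}\cap[0,t]|$; since the direction marks are independent of the infected-set dynamics, on this event $\{T^v_1<\tau_1\}$ is exactly the event that the unique mark is very bad, of probability $r_v/\kappa$, and then $Y$ starts from the origin and examines a fresh edge, moving forward with probability $m/\kappa$, so
\[
  \E\big[(Y_{T^v_1}-X_{T^v_1})\cdot e_1\,\mathds{1}_{T^v_1<\tau_1}\big]\ \ge\ \P(N(\tau_1)=1)\,\frac{r_v}{\kappa}\,\frac{m}{\kappa}.
\]
For the second (``error'') term, both walkers move only at points of $\mathcal{P}$, so each of $|(Y_{\tau_1}-Y_{T^v_1})\cdot e_1|$ and $|(X_{\tau_1}-X_{T^v_1})\cdot e_1|$ is at most $N(\tau_1)-N(T^v_1)$; conditioning on $N(\tau_1)=n$, the index $N(T^v_1)$ of the first very bad mark is stochastically geometric with parameter $r_v/\kappa$, whence $\E\big[(N(\tau_1)-N(T^v_1))\mathds{1}_{T^v_1<\tau_1}\big]\le\frac{r_v}{2\kappa}\E[N(\tau_1)(N(\tau_1)-1)]$ and the error is bounded in expectation by $\frac{r_v}{\kappa}\E[N(\tau_1)(N(\tau_1)-1)]$, which is finite since $N(\tau_1)$ has an exponential tail (Lemma~\ref{finiteTau}, Remark~\ref{remarkNormalization}). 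Combining,
\[
  \E\big[(Y_{\tau_1}-X_{\tau_1})\cdot e_1\big]\ \ge\ \frac{r_v}{\kappa}\Big(\tfrac{m}{\kappa}\,\P(N(\tau_1)=1)-\E\big[N(\tau_1)(N(\tau_1)-1)\big]\Big).
\]
Now $r_v=\kappa\big(e^{\lambda+\varepsilon}/Z_{\lambda+\varepsilon}-e^{\lambda}/Z_{\lambda}\big)>0$ for every $\varepsilon>0$ because $x\mapsto e^{x}/Z_{x}$ is strictly increasing, while the bracket depends only on $\mu$ and $\kappa$; as $\mu\to\infty$ the chain $|I^X_t|$ returns to $0$ faster and faster, so $N(\tau_1)\to 1$ (indeed $\E[N(\tau_1)]=e^{\kappa/\mu}$ and $N(\tau_1)$ is stochastically non-increasing in $\mu$ with uniformly bounded second moment), whence $\P(N(\tau_1)=1)\to 1$ and $\E[N(\tau_1)(N(\tau_1)-1)]\to 0$. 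Hence the bracket is positive, so $\hat v(\lambda+\varepsilon,\mu)>\hat v(\lambda,\mu)$, for all $\mu$ larger than some $M$ which depends neither on $\lambda$ nor on $\varepsilon$.

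The step I expect to be the main obstacle is controlling the error term: the displacements accumulated by the two (now independent) walkers between $T^v_1$ and the end of the regeneration interval are a priori of both signs and could in principle overwhelm the gain at the very bad point. This is resolved by the two observations above — such extra displacement requires at least two jump attempts inside the regeneration interval, an event whose contribution vanishes as $\mu\to\infty$, and, crucially for obtaining an $M$ independent of $\varepsilon$, both the gain and the error carry the common prefactor $r_v/\kappa$, which cancels in the comparison.
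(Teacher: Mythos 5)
Your proposal is correct, and it rests on the same pillars as the paper's proof: the coupling of Subsection~\ref{couplingDiffBias}, the shared regeneration structure giving $\hat v(\lambda+\varepsilon,\mu)-\hat v(\lambda,\mu)=\E[(Y_{\tau_1}-X_{\tau_1})\cdot e_1]/\E[\tau_1]$, the reduction to the event $\{T^v_1<\tau_1\}$, and a gain of order $\frac{r_v}{\kappa}\frac{m}{\kappa}$ set against error terms carrying the same prefactor $\frac{r_v}{\kappa}$ times moments of $N$ that vanish as $\mu\to\infty$. Where you genuinely diverge is the decomposition. The paper works with general $N=n$, isolates the event of exactly one very bad point and no bad points, conditions on $Y$ succeeding at $T^v_1$, and must then argue that the unit lead persists in expectation up to $\tau_1$ after the coupling is severed (the step involving the set $S$ and the independence of the two environments after $T^v_1$); this is the most delicate part of the paper's argument, and it also forces the extra resampling factor $\frac{\mu}{\mu+\kappa}$ because the forward edge may have been examined before. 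You sidestep all of this by harvesting the gain only on $\{N(\tau_1)=1\}$, where the very bad attempt is the only attempt, the examined edge is fresh (so the success probability is exactly $\frac{m}{\kappa}$, with no resampling factor), and there is no post-$T^v_1$ evolution to control; everything else goes into an error term bounded, via the geometric law of the index of the first very bad mark, by $\frac{r_v}{\kappa}\E[N(N-1)]$, in place of the paper's binomial estimates. The price is that you discard the positive contribution from $N\geq 2$, which costs nothing here since the conclusion is only asymptotic in $\mu$: $\P(N(\tau_1)=1)=\frac{\mu}{\mu+\kappa}\to1$ and $\E[N(N-1)]\to0$, justified as in the paper by the monotone coupling in $\mu$ together with domination coming from the exponential tail of $N$ (Lemma~\ref{finiteTau}, Remark~\ref{remarkNormalization}). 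Your use of the independence of the colouring from $(\mathcal{P},\mathcal{R})$, hence from $N(\tau_1)$ and $\tau_1$, matches the paper's. Both routes produce a threshold $M$ independent of $\varepsilon$, and yours is in addition independent of $\lambda$, slightly stronger than what the statement asks for.
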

    
    \begin{proof} 
        Let $(\eta_t,X_t)_{t \geq 0}$ be a NVBRW on dynamical conductances with parameters $\lambda >0$ and $\mu>0$. Let $\varepsilon >0$ and let $(\nu_t,Y_t)_{t\geq 0}$ be a NVBRW on dynamical conductances with parameter $\lambda + \varepsilon$ and $\mu$. We couple $(\eta_t,X_t)_{t \geq 0}$ and $(\nu_t,Y_t)_{t\geq 0}$ as above in subsection \ref{couplingDiffBias}.\bigskip \\
        Our goal is to show that $\hat{v}(\lambda + \varepsilon,\mu) \geq \hat{v}(\lambda,\mu) \geq 0$.
        To do so recall that \[\hat{v}(\lambda + \varepsilon,\mu) \geq \hat{v}(\lambda,\mu) = \frac{\E[Y_{\tau_1} \cdot e_1-X_{\tau_1}\cdot e_1]}{\E[\tau_1]},\] so it suffices to show that \[\E[Y_{\tau_1} \cdot e_1-X_{\tau_1}\cdot e_1] \geq 0.\]
        Let $N = \max\{n \in \N : T_n \leq \tau_1\}$ be the number of jump attempts up to time $\tau_1$. Recall by Lemma \ref{finiteTau} $N$ has exponential tails. Note that $N$ is depending on $\mu$, so we will write $N_{\mu}$ in the second part of this proof when we want to look at $\mu$ going to $\infty$, but for this first part we have $\mu$ fixed, so we will only write $N$.
        \[\E[Y_{\tau_1} \cdot e_1-X_{\tau_1}\cdot e_1] = \sum_{n=1}^{\infty} \E[Y_{\tau_1} \cdot e_1-X_{\tau_1}\cdot e_1|N=n] \cdot \P(N=n) .\]
        Let $n \geq 1$, by definition for all $t < T^v_1$ we have $X_t = Y_t$. \bigskip \\
        If $\tau_1 < T^v_1$ then $Y_{\tau_1} - X_{\tau_1} = 0$ so \[\E[Y_{\tau_1}\cdot e_1 - X_{\tau_1}\cdot e_1|\tau_1 < T^v_1, N=n] = 0\]
        and
        \[\E[Y_{\tau_1}\cdot e_1 - X_{\tau_1}\cdot e_1|N=n] = \E[Y_{\tau_1}\cdot e_1 - X_{\tau_1}\cdot e_1|\tau_1 \geq T^v_1,N=n] \P(\tau_1 \geq T^v_1|N=n).\]
        Let $V$ be that event such that we have exactly one very bad point up to time $\tau_1$, $V = \{\tau_1 \geq T^v_1\} \cap \{\tau_1 < T^v_2\}$. Then 
        \begin{eqnarray*}
            \E[Y_{\tau_1}\cdot e_1 - X_{\tau_1}\cdot e_1|N=n] &=& \E[Y_{\tau_1}\cdot e_1 - X_{\tau_1}\cdot e_1|V,N=n] \P(V|\tau_1 \geq T^v_1,N=n)\\
            &&\quad +\E[Y_{\tau_1}\cdot e_1 - X_{\tau_1}\cdot e_1|V^c,N=n] \P(V^c|\tau_1 \geq T^v_1,N=n)\\
            &\geq&\E[Y_{\tau_1}\cdot e_1 - X_{\tau_1}\cdot e_1|V,N=n] \P(V|\tau_1 \geq T^v_1,N=n)\\
            && \quad -2n\P(V^c|\tau_1 \geq T^v_1,N=n).\\
        \end{eqnarray*}
        We now compute the probabilities in the expression above. To do so we recall that the coloring (good, bad, very bad), is independent of $\mathcal{P}$ so also of $N$. This way the number of very bad points we see up to time $N$ is distributed Binomial$(N,\frac{r_v}{\kappa})$.
        \begin{eqnarray*}
            \P(V|\tau_1 \geq T^v_1,N=n) = n \frac{r_v}{\kappa} \left(1-\frac{r_v}{\kappa}\right)^{n-1}\\
        \end{eqnarray*}
        and
        \begin{eqnarray*}
            \P(V^c|\tau_1 \geq T^v_1,N=n) &=& \P(\tau_1 \geq T^v_1|N=n)-\P(V^c|\tau_1 \geq T^v_1,N=n)\\
            &=& 1 - \left(1-\frac{r_v}{\kappa}\right)^{n} - n \frac{r_v}{\kappa} \left(1-\frac{r_v}{\kappa}\right)^{n-1}\\
            &\leq& n \frac{r_v}{\kappa}- n \frac{r_v}{\kappa} \left(1-\frac{r_v}{\kappa}\right)^{n-1}\\
            &=& n \frac{r_v}{\kappa} \left(1-\left(1-\frac{r_v}{\kappa}\right)^{n-1}\right)\\
            &\leq&n(n-1) \left(\frac{r_v}{\kappa}\right)^2.\\
        \end{eqnarray*}
        Altogether we get:
        \begin{eqnarray*}
            \E[Y_{\tau_1}\cdot e_1 - X_{\tau_1}\cdot e_1|N=n] \geq n \frac{r_v}{\kappa} \left(1-\frac{r_v}{\kappa}\right)^{n-1}\E[Y_{\tau_1}\cdot e_1 - X_{\tau_1}\cdot e_1|V,N=n] -2n^2(n-1)\left(\frac{r_v}{\kappa}\right)^2.\\
        \end{eqnarray*}
        We now want to make a handle separately the events on which there are bad points before $\tau_1$ and the events in which there are none.
        \begin{eqnarray*}
            \E[Y_{\tau_1}\cdot e_1 - X_{\tau_1}\cdot e_1|V,N=n] &=& \P(T^b_1>\tau_1|V,N=n)\E[Y_{\tau_1}\cdot e_1 - X_{\tau_1}\cdot e_1|T^b_1>\tau_1,V,N=n]\\
            &&+ \P(T^b_1\leq\tau_1|V,N=n)\E[Y_{\tau_1}\cdot e_1 - X_{\tau_1}\cdot e_1|T^b_1\leq\tau_1,V,N=n] \\
            &=&\E[Y_{\tau_1}\cdot e_1 - X_{\tau_1}\cdot e_1|T^b_1>\tau_1,V,N=n]\\
            &&+ \P(T^b_1\leq\tau_1|V,N=n)\E[Y_{\tau_1}\cdot e_1 - X_{\tau_1}\cdot e_1|T^b_1\leq\tau_1,V,N=n]\\
            &&-\P(T^b_1\leq\tau_1|V,N=n)\E[Y_{\tau_1}\cdot e_1 - X_{\tau_1}\cdot e_1|T^b_1>\tau_1,V,N=n].\\
        \end{eqnarray*}
        Further using that on the events $V$ and $\{N=n\}$ the number of bad points has a Binomial distribution with parameter $n-1$ and $\frac{r_b}{\kappa}$
        \begin{eqnarray*}
            \P(T^b_1\leq\tau_1|V,N=n) &=& 1 - \P(T^b_1>\tau_1|V,N=n)\\
            &=& 1-\left(1- \frac{r_b}{\kappa}\right)^{n-1}\\
            &\leq&  (n-1)\frac{r_b}{\kappa}.
        \end{eqnarray*}
        Using this lower bound in the previous computation yields
        \begin{eqnarray*}
            \E[Y_{\tau_1}\cdot e_1 - X_{\tau_1}\cdot e_1|V,N=n] &\geq&\E[Y_{\tau_1}\cdot e_1 - X_{\tau_1}\cdot e_1|T^b_1>\tau_1,V,N=n]- 4n(n-1)\frac{r_b}{\kappa}.\\
        \end{eqnarray*}
        We now want to lower-bound $\E[Y_{\tau_1}\cdot e_1 - X_{\tau_1}\cdot e_1|T^b_1>\tau_1,V,N=n]$.\\
        To do so we will restrict ourselves to the case where at the very bad point $Y$ succeeds its jump in direction $e_1$.
        \begin{eqnarray*}
            &&\E[Y_{\tau_1}\cdot e_1 - X_{\tau_1}\cdot e_1|T^b_1>\tau_1,V,N=n] \geq \\
            &&\E[Y_{\tau_1}\cdot e_1 - X_{\tau_1}
            \cdot e_1|T^b_1>\tau_1,V,N=n,Y_{T^v_1} = Y_{(T^v_1)^-} + e_1] \cdot \P(Y_{T^v_1} = Y_{(T^v_1)^-} + e_1|T^b_1>\tau_1,V,N=n).
        \end{eqnarray*}
        Note that for all $i \geq 0 $ such that $T^v_1 < T_i \leq T_n$ on the event $\{T^b_1>\tau_1,V,N=n\}$ we have that $X$ and $Y$ are attempting jumps in direction $e_1$ at time $T_i$.\\
        Let $S = \{t \geq T^v_1: X_{t} \cdot e_1 \geq X_{T^v_1} \}$. We restrict ourselves to the event $\{T^b_1>\tau_1,V,N=n,Y_{T^v_1} = Y_{(T^v_1)^-} + e_1\}$ then for all $i,j \geq 0 $ such that $S < T_i \leq T_n$ and $T^v_1 < T_j \leq T_n$ we have by the construction of our coupling that the $\eta_{T_i}(X_{T_i^-},X_{T_i^-})$ is independent of $\nu_{T_j}(Y_{T_j^-},Y_{T_j^-})$. Further, for all $k \geq 0$ such that $T^v_1\leq T_k \leq S$ we have $X_{T_k} \cdot e_1 \leq Y_{T_k}\cdot e_1 + 1$, so we have that \[\E[Y_{\tau_1}\cdot e_1 - X_{\tau_1}\cdot e_1|T^b_1>\tau_1,V,N=n,Y_{T^v_1} = Y_{(T^v_1)^-} + e_1] \geq 1.\]
        Last we want to lower bound $\P(Y_{T^v_1} = Y_{(T^v_1)^-} + e_1|T^b_1>\tau_1,V,N=n)$. 
        Let $T_{before} = \max_{T_i < T^v_1: i \geq 0}$ and let $R$ be the event that $\omega((T^v_1)^-,(T^v_1)^-+e_1)$ 
        is resampled in the time interval $(T_{before},T^v_1)$ 
        then \[\P(R|T^b_1>\tau_1,V,N=n) = \frac{\mu}{\mu+\kappa}.\] 
        And for $V\sim Uniform[0,\kappa]$
        \begin{eqnarray*}
            \P(Y_{T^v_1} = Y_{(T^v_1)^-} + e_1|T^b_1>\tau_1,V,N=n) \geq \frac{\mu}{\mu+\kappa} \P(V \leq \omega_{T^v_1}((T^v_1)^-,(T^v_1)^-+e_1)) = \frac{\mu}{\mu+\kappa} \frac{m}{\kappa} > 0.
        \end{eqnarray*}
        So putting everything together we get:
        \begin{eqnarray*}
            \E[Y_{\tau_1}\cdot e_1 - X_{\tau_1}\cdot e_1|N=n] &\geq& n \frac{r_v}{\kappa} \left(1-\frac{r_v}{\kappa}\right)^{n-1}\E[Y_{\tau_1}\cdot e_1 - X_{\tau_1}\cdot e_1|V,N=n] -2n^2(n-1)\left(\frac{r_v}{\kappa}\right)^2\\
            &\geq& n \frac{r_v}{\kappa} \left(1-\frac{r_v}{\kappa}\right)^{n-1}\left[\E[Y_{\tau_1}\cdot e_1 - X_{\tau_1}\cdot e_1|T^b_1>\tau_1,V,N=n]- 4n(n-1)\frac{r_b}{\kappa}\right] \\
            &&\quad -2n^2(n-1)\left(\frac{r_v}{\kappa}\right)^2\\
            &\geq&n \frac{r_v}{\kappa} \left(1-\frac{r_v}{\kappa}\right)^{n-1}\left[\frac{\mu}{\mu+\kappa} \frac{m}{\kappa}- 4n(n-1)\frac{r_b}{\kappa}\right] -2n^2(n-1)\left(\frac{r_v}{\kappa}\right)^2\\
            &=&n \frac{r_v}{\kappa} \left(1-\frac{r_v}{\kappa}\right)^{n-1}\frac{\mu}{\mu+\kappa} \frac{m}{\kappa} -n^2(n-1)\left(\frac{r_v}{\kappa}\right)\left(4 \frac{r_b}{\kappa}\left(1-\frac{r_v}{\kappa}\right)^{n-1} +2\frac{r_v}{\kappa}\right)\\
            &\geq& n \frac{r_v}{\kappa} \left(1-\frac{r_v}{\kappa}\right)^{n-1}\frac{\mu}{\mu+\kappa} \frac{m}{\kappa} - n^2(n-1)\left(\frac{r_v}{\kappa}\right)\left(4 \frac{r_b}{\kappa}+2\frac{r_v}{\kappa}\right).\\
        \end{eqnarray*}
        So 
        \begin{eqnarray*}
            \E[Y_{\tau_1}\cdot e_1 - X_{\tau_1}\cdot e_1]&\geq&\frac{mr_v}{\kappa^2} \frac{\mu}{\mu+\kappa} \E\left[N(1-\frac{r_v}{\kappa})^{N-1}\right] -\left(\frac{r_v}{\kappa}\right)\left(4 \frac{r_b}{\kappa}+2\frac{r_v}{\kappa}\right) \E\left[N^2(N-1)\right].\\
        \end{eqnarray*}
        Now we want to show that for $\mu \rightarrow \infty$, $N_{\mu}$ converges in probability to $1$. Let $\varepsilon >0$ then
        \begin{eqnarray*}
            \P(|N_{\mu}-1| > \varepsilon) \leq \P(N_{\mu} \neq 1) = \frac{\kappa}{\mu+\kappa} \longrightarrow 0 \text{ for } {\mu \rightarrow \infty}.
        \end{eqnarray*}
        We can couple two discrete death birth chains $(W^1_n)_{n\in \N}$ and $(W^2_n)_{n\in \N}$ with respective parameters $\kappa,\mu_1$ and $\kappa,\mu_2$ and $\mu_1 > \mu_2$ such that the first return time of $(W^1_n)_{n\in \N}$ is smaller of equal to the first return time of $(W^2_n)_{n\in \N}$. This way we can assume that $(N_{\mu})_{\mu>0}$ is a decreasing sequence in $\mu$. Further the function $f(x) = x^2(x-1)$ is non-decreasing for $x > 2/3$, but $N_{\mu}\geq 1$ by definition, so by monotone convergence we get that 
        \begin{eqnarray*}
            \lim_{\mu \rightarrow \infty} \E\left[N^2(N-1)\right] = 0.
        \end{eqnarray*} 
        We also have that
        \begin{eqnarray*}
            \E[N_{\mu}(1-(N_{\mu}-1)\frac{r_v}{\kappa})] \leq \E\left[N_{\mu}(1-\frac{r_v}{\kappa})^{N_{\mu}-1}\right] \leq \E[N_{\mu}].
        \end{eqnarray*}
        As the function $g(x) = x(1-\frac{r_v}{\kappa}(x-1)) = (1+\frac{r_v}{\kappa})x -\frac{r_v}{\kappa} x^2$ is non-increasing for $x > \frac{2+\frac{r_v}{\kappa} }{2\frac{r_v}{\kappa}}$ so in particular for $x \geq 1$. Then by monotone convergence we get 
        \begin{eqnarray*}
            \lim_{\mu \rightarrow \infty}\E[N_{\mu}] = 1
        \end{eqnarray*}
        and
        \begin{eqnarray*}
            \lim_{\mu \rightarrow \infty}\E[N_{\mu}(1-(N_{\mu}-1)\frac{r_v}{\kappa})] = 1,
        \end{eqnarray*}
        so 
        \begin{eqnarray*}
            \lim_{\mu \rightarrow \infty} \E[Y_{\tau_1}\cdot e_1 - X_{\tau_1}\cdot e_1] \geq\frac{mr_v}{\kappa^2} > 0.
        \end{eqnarray*}
        For $\mu$ large enough we get the claim.
    \end{proof}

\subsection{Is $\mu$ large enough necessary?}
    In this subsection we show that even under the assumption that $q$ is uniform elliptic, it may happen that $\hat{v}(\lambda,\mu)$ is asymptotically decreasing for $\lambda \rightarrow \infty$. This means that we can not expect to drop the assumption that $\mu$ is large in Theorem \ref{Monotonmularge} even for measures that are bounded away from 0.\\
    \begin{thm}\label{NonIncreasing} Let $d \geq 2$.
        There exists $\mu>0$, $q \in \mathcal{M}_1(\mathbb{R})$ with $q$ uniformly elliptic, such that for $\varepsilon >0$ and $\lambda$ large enough 
        \[\hat{v}(\lambda+\varepsilon,\mu) < \hat{v}(\lambda,\mu).\]
    \end{thm}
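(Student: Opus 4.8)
The plan is to read the sign of $\hat v(\lambda+\varepsilon,\mu)-\hat v(\lambda,\mu)$ for large $\lambda$ straight off the asymptotic expansion in Corollary \ref{corAssymtoticNVBRW}. Write that expansion as $\hat v(\lambda,\mu) = v_0(\mu) + c_1(\mu)\,e^{-\lambda} + O(e^{-2\lambda})$, where $v_0(\mu)=\E[\tilde{\omega}/(\mu+\tilde{\omega})]/\E[1/(\mu+\tilde{\omega})]$ does not depend on $\lambda$ and $c_1(\mu)$ is the coefficient of $e^{-\lambda}$ there (so $c_1(\mu)=(2d-2)$ times the bracketed quotient). Applying the expansion at $\lambda+\varepsilon$ and at $\lambda$ and subtracting — the remainder being uniform in $\lambda$ once $\mu$ and $q$ are fixed — gives
\[
\hat v(\lambda+\varepsilon,\mu)-\hat v(\lambda,\mu) = c_1(\mu)\,(e^{-\varepsilon}-1)\,e^{-\lambda} + O(e^{-2\lambda}).
\]
Since $e^{-\varepsilon}-1<0$, the instant we know $c_1(\mu)>0$ the leading term is strictly negative, so $\hat v(\lambda+\varepsilon,\mu)<\hat v(\lambda,\mu)$ for every $\lambda$ beyond a threshold depending on $\varepsilon,\mu,q$. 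Thus the whole statement reduces to producing a single $\mu>0$ and a single uniformly elliptic $q$ with $c_1(\mu)>0$.

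Next I would simplify $c_1(\mu)$. Using the alternative expression for the first-order coefficient established right after Corollary \ref{corAssymtoticNVBRW}, together with the substitution $f:=1/(\mu+\tilde{\omega})$ (so that $\E[\tilde{\omega}/(\mu+\tilde{\omega})]=1-\mu\E[f]$, $\E[\tilde{\omega}/(\mu+\tilde{\omega})^2]=\E[f]-\mu\E[f^2]$, and $m+\mu=\E[1/f]$), the numerator collapses to $\E[f^2]-\E[f]^2=\operatorname{Var}(f)$, so
\[
\frac{c_1(\mu)}{2d-2} = (m+\mu)\,\frac{\operatorname{Var}(f)}{\E[f]^2} - \frac{1-\mu\E[f]}{\E[f]},
\]
and hence, since $d\ge 2$,
\[
c_1(\mu)>0 \iff (m+\mu)\operatorname{Var}\!\left(\tfrac{1}{\mu+\tilde{\omega}}\right) > \E\!\left[\tfrac{1}{\mu+\tilde{\omega}}\right]\E\!\left[\tfrac{\tilde{\omega}}{\mu+\tilde{\omega}}\right].
\]

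Then I would verify this inequality for an explicit law with $\mu$ small. Take $q=\tfrac12\delta_1+\tfrac12\delta_K$ for a large constant $K$; it is supported on $[1,K]$, hence uniformly elliptic (and it satisfies the bounded-support hypothesis with $\kappa=K$, as well as $q(\{0\})=0$). Letting $\mu\downarrow 0$, the right-hand side of the last display tends to $\E[1/\tilde{\omega}]\cdot 1=\tfrac12(1+1/K)$, while the left-hand side tends to $m\cdot\operatorname{Var}(1/\tilde{\omega})=\tfrac12(1+K)\cdot\tfrac14(1-1/K)^2$, which grows like $K/8$. So for $K$ fixed large enough the strict inequality holds in the limit $\mu\to0$; since both sides are continuous in $\mu$ on $(0,\infty)$, it persists for all sufficiently small $\mu>0$. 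Fixing one such $\mu$ yields $c_1(\mu)>0$, and by the first paragraph this gives $\hat v(\lambda+\varepsilon,\mu)<\hat v(\lambda,\mu)$ for $\lambda$ large, which is the claim.

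The algebraic reduction of $c_1$ and the $\mu\to 0$ limit are routine. The only point requiring care is that the $O(e^{-2\lambda})$ remainder in Corollary \ref{corAssymtoticNVBRW} is genuinely uniform in $\lambda$ for fixed $(\mu,q)$ — which it is, that corollary being an expansion as $\lambda\to\infty$ — so that after the $\lambda$-independent term $v_0(\mu)$ cancels, a strictly negative term of order $e^{-\lambda}$ survives and dominates the error for all large $\lambda$. There is no deeper obstacle: once the NVBRW expansion is available with its first-order coefficient, the content is entirely in choosing the example (here a two-point, uniformly elliptic $q$ with one large atom and $\mu$ small) so that this coefficient is positive, which is exactly the opposite of what happens for the CBRW.
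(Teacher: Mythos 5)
Your proposal is correct and follows essentially the same route as the paper: read the sign of $\hat v(\lambda+\varepsilon,\mu)-\hat v(\lambda,\mu)$ off Corollary \ref{corAssymtoticNVBRW} together with the alternative form of the first-order coefficient, then exhibit a two-point uniformly elliptic $q$ and a $\mu$ making that coefficient positive (the paper's Lemma \ref{AlmostPercolation} with $q=\tfrac12(\delta_\alpha+\delta_1)$, $\alpha=\mu=0.1$). Your variance identity for $f=1/(\mu+\tilde\omega)$ and the $K$ large, $\mu\downarrow 0$ limiting argument are just a different, equally valid way of verifying positivity of the same coefficient — your example is the paper's up to rescaling the conductances and $\mu$.
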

    The proof of this theorem will be relying on the asymptotic expression we have for the speed in Corollary \ref{corAssymtoticNVBRW}.
\begin{lem}\label{AlmostPercolation}
        Let $\alpha \in (0,1)$, we define $q = \frac{\delta{\alpha}+\delta_1}{2}$. Then we have that for $m = \frac{1+\alpha}{2}$ and $\mu >0$:
        \[(m+\mu)\frac{\E\left[\frac{1}{(\mu + \tilde{\omega})^2}\right]\E\left[ \frac{\tilde{\omega}}{\mu + \tilde{\omega}}\right] -\E\left[ \frac{\tilde{\omega}}{(\mu + \tilde{\omega})^2}\right] \E\left[\frac{1}{\mu + \tilde{\omega}}\right] }{\E\left[\frac{1}{\mu + \tilde{\omega}}\right]^2} - \frac{\E\left[\frac{\tilde{\omega}}{\mu + \tilde{\omega}}\right]}{\E\left[\frac{1}{\mu + \tilde{\omega}}\right]} = \frac{\alpha^2-(2\mu+6)\alpha + 1 -2\mu}{2(2\mu+1+\alpha)}.\]
    \end{lem}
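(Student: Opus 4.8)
The plan is to prove this by a direct computation, using that under $q=\tfrac{\delta_\alpha+\delta_1}{2}$ the variable $\tilde\omega$ takes only the values $\alpha$ and $1$, each with probability $\tfrac12$. I would first introduce the shorthand $a=\mu+\alpha$ and $b=\mu+1$, so that all four relevant expectations are elementary:
\[
\E\left[\frac{1}{\mu+\tilde\omega}\right]=\frac{a+b}{2ab},\qquad
\E\left[\frac{\tilde\omega}{\mu+\tilde\omega}\right]=\frac{\alpha b+a}{2ab},\qquad
\E\left[\frac{1}{(\mu+\tilde\omega)^2}\right]=\frac{a^2+b^2}{2a^2b^2},\qquad
\E\left[\frac{\tilde\omega}{(\mu+\tilde\omega)^2}\right]=\frac{\alpha b^2+a^2}{2a^2b^2}.
\]

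The key step is to simplify the bracketed ``covariance'' appearing in the first term of the left-hand side. Multiplying out and cancelling, one finds
\[
(a^2+b^2)(\alpha b+a)-(\alpha b^2+a^2)(a+b)=ab(\alpha-1)(a-b),
\]
and since $a-b=\alpha-1$, the numerator $\E[\tfrac{1}{(\mu+\tilde\omega)^2}]\E[\tfrac{\tilde\omega}{\mu+\tilde\omega}]-\E[\tfrac{\tilde\omega}{(\mu+\tilde\omega)^2}]\E[\tfrac{1}{\mu+\tilde\omega}]$ equals $\tfrac{(\alpha-1)^2}{4a^2b^2}$. Dividing by $\E[\tfrac{1}{\mu+\tilde\omega}]^2=\tfrac{(a+b)^2}{4a^2b^2}$ gives $\tfrac{(\alpha-1)^2}{(a+b)^2}$. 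I would then use the observation $m+\mu=\tfrac{1+\alpha}{2}+\mu=\tfrac{a+b}{2}$, which makes the first term of the left-hand side collapse to $\tfrac{(\alpha-1)^2}{2(a+b)}$. The second term is simply $\tfrac{\E[\tilde\omega/(\mu+\tilde\omega)]}{\E[1/(\mu+\tilde\omega)]}=\tfrac{\alpha b+a}{a+b}$.

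Finally I would combine the two terms over the common denominator $2(a+b)=2(2\mu+1+\alpha)$: the numerator becomes $(\alpha-1)^2-2(\alpha b+a)$, which after substituting $a=\mu+\alpha$, $b=\mu+1$ equals $\alpha^2-(2\mu+6)\alpha+1-2\mu$, i.e.\ exactly the claimed right-hand side. There is no conceptual obstacle; the only thing to watch is sign bookkeeping, so I would double-check the factorization $(a^2+b^2)(\alpha b+a)-(\alpha b^2+a^2)(a+b)=ab(\alpha-1)(a-b)$ and the identity $a-b=\alpha-1$ so that the factor $(\alpha-1)$ is squared with the correct sign, as well as the expansion $2(\alpha b+a)=2\alpha\mu+4\alpha+2\mu$ that produces the $-(2\mu+6)\alpha$ coefficient.
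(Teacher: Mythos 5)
Your computation is correct and it is essentially the same direct verification as in the paper: evaluate the four elementary expectations for the two-point law, factor the covariance-type numerator (the paper also reduces it to a multiple of $(1-\alpha)^2$), use $m+\mu=\frac{(\mu+\alpha)+(\mu+1)}{2}$, and combine over the common denominator $2(2\mu+1+\alpha)$. The $a,b$ shorthand is just a cosmetic difference; all intermediate identities you flag for checking do hold.
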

    \begin{proof}
        We have 
        \begin{eqnarray*}
            &&\E\left[\frac{1}{(\mu + \tilde{\omega})^2}\right]\E\left[ \frac{\tilde{\omega}}{\mu + \tilde{\omega}}\right] -\E\left[ \frac{\tilde{\omega}}{(\mu + \tilde{\omega})^2}\right] \E\left[\frac{1}{\mu + \tilde{\omega}}\right] \\
            &=&\frac{1}{4}\left[\left(\frac{1}{(\mu+\alpha)^2} +\frac{1}{(\mu+1)^2}\right)\left(\frac{\alpha}{\mu+\alpha} +\frac{1}{\mu+1} \right)-\left(\frac{\alpha}{(\mu+\alpha)^2} +\frac{1}{(\mu+1)^2} \right)\left(\frac{1}{\mu+\alpha} +\frac{1}{\mu+1} \right)\right]\\
            &=&\frac{1}{4}\left[\frac{\alpha}{(\mu+\alpha)(\mu+1)^2}+\frac{1}{(\mu+\alpha)^2(\mu+1)}-\frac{\alpha}{(\mu+\alpha)^2(\mu+1)} -\frac{1}{(\mu+\alpha)(\mu+1)^2}\right].\\
        \end{eqnarray*}
        This yields
        \begin{eqnarray*}
            (m+\mu)\frac{\E\left[\frac{1}{(\mu + \tilde{\omega})^2}\right]\E\left[ \frac{\tilde{\omega}}{\mu + \tilde{\omega}}\right] -\E\left[ \frac{\tilde{\omega}}{(\mu + \tilde{\omega})^2}\right] \E\left[\frac{1}{\mu + \tilde{\omega}}\right] }{\E\left[\frac{1}{\mu + \tilde{\omega}}\right]^2} &=& (m+\mu)\frac{\alpha(\mu+\alpha) +\mu +1 -\alpha(\mu+1)-\mu-\alpha}{(2\mu+\alpha+1)^2}\\
            &=&\left(\frac{1+\alpha}{2}+\mu\right) \frac{(1-\alpha)(\mu+1)-(1-\alpha)(\mu+\alpha)}{(2\mu+\alpha+1)^2}\\
            &=& \frac{1}{2} \frac{(1-\alpha)^2}{2\mu+\alpha+1}.
        \end{eqnarray*}
        Further
        \begin{eqnarray*}
            &&\frac{\E\left[\frac{\tilde{\omega}}{\mu + \tilde{\omega}}\right]}{\E\left[\frac{1}{\mu + \tilde{\omega}}\right]} = \frac{2\alpha+\mu(1+\alpha)}{2\mu+\alpha+1}.
        \end{eqnarray*}
        Altogether we get that
        \begin{eqnarray*}
            (m+\mu)\frac{\E\left[\frac{1}{(\mu + \tilde{\omega})^2}\right]\E\left[ \frac{\tilde{\omega}}{\mu + \tilde{\omega}}\right] -\E\left[ \frac{\tilde{\omega}}{(\mu + \tilde{\omega})^2}\right] \E\left[\frac{1}{\mu + \tilde{\omega}}\right] }{\E\left[\frac{1}{\mu + \tilde{\omega}}\right]^2} - \frac{\E\left[\frac{\tilde{\omega}}{\mu + \tilde{\omega}}\right]}{\E\left[\frac{1}{\mu + \tilde{\omega}}\right]}
            &=& \frac{\alpha^2-(2\mu+6)\alpha + 1 -2\mu}{2(2\mu+1+\alpha)}.
        \end{eqnarray*}
    \end{proof}
    \begin{proof}{Theorem \ref{NonIncreasing}\\}
        For $\mu >0$ $\alpha \in (0,1)$ and $q = \frac{\delta_{\alpha}+\delta_1}{2}$ and $\tilde{\omega} \sim q$, we define 
        \[A(\mu,\alpha)=(m+\mu)\frac{\E\left[\frac{1}{(\mu + \tilde{\omega})^2}\right]\E\left[ \frac{\tilde{\omega}}{\mu + \tilde{\omega}}\right] -\E\left[ \frac{\tilde{\omega}}{(\mu + \tilde{\omega})^2}\right] \E\left[\frac{1}{\mu + \tilde{\omega}}\right] }{\E\left[\frac{1}{\mu + \tilde{\omega}}\right]^2} - \frac{\E\left[\frac{\tilde{\omega}}{\mu + \tilde{\omega}}\right]}{\E\left[\frac{1}{\mu + \tilde{\omega}}\right]}. \]
        Then by Lemma \ref{AlmostPercolation} we have that 
        \[A(\mu,\alpha)=\frac{\alpha^2-(2\mu+6)\alpha + 1 -2\mu}{2(2\mu+1+\alpha)}.\]
        So for $\alpha = \mu =0.1$ we have that $A(\mu,\alpha) >0$.\\
        From the asymptotic expression of the speed in Corollary \ref{corAssymtoticNVBRW} we have that 
        \[\hat{v}(\lambda,\mu) = \frac{\E\left[\frac{\tilde{\omega}}{\mu + \tilde{\omega}}\right]}{\E\left[\frac{1}{\mu + \tilde{\omega}}\right]}
        + (2d-2)\frac{\E\left[\frac{\tilde{\omega}(\tilde{\omega} - m)}{(\mu + \tilde{\omega})^2}\right]\E\left[ \frac{1}{\mu + \tilde{\omega}}\right] -\E\left[ \frac{\tilde{\omega}-m}{(\mu + \tilde{\omega})^2}\right] \E\left[\frac{\tilde{\omega}}{\mu + \tilde{\omega}}\right] - \E\left[\frac{\tilde{\omega}}{\mu + \tilde{\omega}}\right] \E\left[\frac{1}{\mu + \tilde{\omega}}\right]}{\E\left[\frac{1}{\mu + \tilde{\omega}}\right]^2} e^{-\lambda} + O(e^{-2\lambda}). \]
        So with the measure $q$ for the environment we get
        \[\hat{v}(\lambda,\mu) = \frac{\E\left[\frac{\tilde{\omega}}{\mu + \tilde{\omega}}\right]}{\E\left[\frac{1}{\mu + \tilde{\omega}}\right]}
        + (2d-2)A(\mu,\alpha)  e^{-\lambda} + O(e^{-2\lambda}). \] 
        Let $C(\mu,\alpha)$ be the implicit constant in the $O(e^{-2\lambda})$, and $\varepsilon >0$.
        \begin{eqnarray*}
            \hat{v}(\lambda + \varepsilon,\mu) -\hat{v}(\lambda,\mu) &\leq& (2d-2)A(\mu,\alpha) \left(e^{-(\lambda+\varepsilon)}-e^{-\lambda}\right) + |C(\mu,\alpha)| (e^{-2\lambda-2\varepsilon}+e^{-2\lambda})\\
            &\leq&  (2d-2)A(\mu,\alpha) \left(e^{-(\lambda+\varepsilon)}-e^{-\lambda}\right) + 2|C(\mu,\alpha)| e^{-2\lambda}.
        \end{eqnarray*}
        So as $A(\mu,\alpha) >0$ we have that for $\lambda$ large enough we get $\hat{v}(\lambda + \varepsilon,\mu) -\hat{v}(\lambda,\mu) <0$.\\
    \end{proof}
\subsection{Monotonicity for $\lambda$ large enough}
The goal of this subsection is to show that for a fixed $\mu$ the speed of the NVBRW $\hat{v}(\lambda,\mu)$ is for $\lambda$ large enough eventually monotone. To do so we will study the asymptotic behavior of the derivative of the speed for $\lambda$ large. We will be using the coupling presented in section \ref{couplingDiffBias}. The computations we will be doing are adapted from the proof of section 4 in \cite{percolation}.
We will start by arguing that $\lambda \mapsto \hat{v}(\lambda,\mu)$ is continuously differentiable.\\
Let $R_{a}$ and $L_{a}$ be the number of attempted jumps respectively in direction $e_1$ and $-e_1$ up to time $\tau_1$, similarly let $R$ and $L$ be the number of succeeded jumps respectively in direction $e_1$ and $-e_1$ up to time $\tau_1$. We will write as before $N$ for the number of attempted jumps up to $\tau_1$ and $(T^g_i)_{i \geq 1}$/$(T^b_i)_{i \geq 1}$/$(T^v_i)_{i \geq 1}$ for respectively good/bad/very bad points.
\begin{lem} \label{lemma3.4Gantert}
    Let $X$ be a NVBRW on dynamical conductances with parameter $\lambda >0$ and $\mu>0$,
    \[\E^{\lambda}[X_{\tau_1}\cdot e_1] = \E^{0}\left[(R-L)e^{R_a-L_a} \left(\frac{2d}{Z_{\lambda}}\right)^N\right].\]
\end{lem}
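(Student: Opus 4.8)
The plan is to establish the identity by an elementary change of measure that replaces the bias-$\lambda$ direction law by the symmetric one, exploiting the alternative construction of the NVBRW from Section \ref{alternativeRep} and Remark \ref{remarkNormalization}(iii). In that construction the jump attempts of $X$ occur at the points $t_1<t_2<\dots$ of a rate-$\kappa$ Poisson process $\mathcal{P}$ (whose rate does \emph{not} depend on $\lambda$); at the $k$-th attempt a direction $D_k$ is drawn among the $2d$ lattice directions, independently across $k$, with probabilities $p^{\lambda}(e_1)=e^{\lambda}/\Zl$, $p^{\lambda}(-e_1)=e^{-\lambda}/\Zl$ and $p^{\lambda}(\pm e_j)=1/\Zl$ for $j=2,\dots,d$; and the $k$-th attempt succeeds iff an independent $V_k\sim\mathrm{Unif}[0,\kappa]$ lies below the conductance of the examined edge. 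I would organise the randomness into a ``$\lambda$-free block'' $\mathcal{D}=(\mathcal{P},\text{the removal clock driving }I_t,(V_k)_k,\text{refresh clocks and values})$, whose law does not involve $\lambda$, together with the i.i.d.\ direction sequence $(D_k)_k$, independent of $\mathcal{D}$, whose law carries the bias; note that $\P^{0}$ is the same construction with $(D_k)$ uniform on the $2d$ directions ($p^{0}\equiv 1/(2d)$), since $Z_0=2d$.

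The one delicate step is to verify that the infected-set size $(|I_t|)_{t\ge0}$ — and hence the regeneration time $\tau_1$, the number of attempts $N:=N(\tau_1)$, and all of the removal/refresh bookkeeping up to $\tau_1$ — is a deterministic functional of the $\lambda$-free block $\mathcal{D}$ alone: each attempt adds exactly one copy to $I_t$ no matter which direction is attempted, and copies are removed at the points of the removal clock, so the direction choices never enter the dynamics of $|I_t|$. Granting this, $X$ up to $\tau_1$ is a common measurable function $\Phi(\mathcal{D},D_1,\dots,D_N)$ under both $\P^{\lambda}$ and $\P^{0}$, and $\{N=n\}\in\sigma(\mathcal{D})$. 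Conditioning on $\{N=n\}$ and changing measure on the finitely many coordinates $D_1,\dots,D_n$ gives
\[
\E^{\lambda}\big[(X_{\tau_1}\cdot e_1)\,\mathds{1}_{\{N=n\}}\big]
=\E^{0}\Big[(X_{\tau_1}\cdot e_1)\prod_{k=1}^{n}\frac{p^{\lambda}(D_k)}{p^{0}(D_k)}\,\mathds{1}_{\{N=n\}}\Big].
\]
On $\{N=n\}$ there are exactly $n$ attempts, $R_a$ of them in direction $e_1$ and $L_a$ in direction $-e_1$, so the product telescopes to $\big(2d/\Zl\big)^{n}e^{\lambda(R_a-L_a)}$, while $X_{\tau_1}\cdot e_1=R-L$ by the definitions of $R$ and $L$.

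Finally I would sum over $n\ge1$. The interchange of summation and expectation is legitimate because, conditionally on $\{N=n\}$, the directions are i.i.d., so $\E^{0}\big[(2d/\Zl)^{n}e^{\lambda(R_a-L_a)}\mid N=n\big]=\big(\tfrac{e^{\lambda}+e^{-\lambda}+2d-2}{2d}\cdot\tfrac{2d}{\Zl}\big)^{n}=1$ and $|R-L|\le n$, whence the series is dominated in absolute value by $\sum_{n\ge1}n\,\P^{0}(N=n)=\E[N(\tau_1)]=e^{\kappa/\mu}<\infty$ by Remark \ref{remarkNormalization}(ii) (alternatively, split $R-L$ into its nonnegative parts and use Tonelli). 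This yields $\E^{\lambda}[X_{\tau_1}\cdot e_1]=\E^{0}\big[(R-L)\,e^{\lambda(R_a-L_a)}\,(2d/\Zl)^{N}\big]$, which is the asserted formula. I expect the only genuine work to be the careful check in the second paragraph that the construction really does decompose as a $\lambda$-free block plus independent biased directions, so that $\tau_1$, $N$ and the environment bookkeeping up to $\tau_1$ are measurable with respect to the $\lambda$-free block; after that, everything reduces to the elementary finite-dimensional change of measure displayed above.
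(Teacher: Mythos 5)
Your proof is correct and follows essentially the route the paper takes: its proof simply defers to Lemma 3.4 of \cite{percolation}, which is established by exactly this change of measure on the i.i.d.\ direction variables after isolating the $\lambda$-free block (attempt clock, removal clock, success uniforms, refresh data) that determines $|I_t|$, $\tau_1$ and $N$ — the step you rightly single out as the delicate one, and which the alternative representation supplies. Note that your computation yields the factor $e^{\lambda(R_a-L_a)}$, which is the correct likelihood ratio (it is precisely what makes $\E^{0}\bigl[e^{\lambda(R_a-L_a)}(2d/\Zl)^{N}\mid N=n\bigr]=1$), so the $e^{R_a-L_a}$ in the printed statement should be read with the $\lambda$ in the exponent.
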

\begin{proof}
    See the proof of Lemma 3.4 in \cite{percolation} and use the alternative representation of the process.
\end{proof}
\begin{lem}\label{continouslydifferentiable}
    $\lambda \mapsto \hat{v}(\lambda,\mu)$ is continuously differentiable.
\end{lem}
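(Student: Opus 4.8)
The plan is to differentiate the regeneration representation of the speed under the expectation, using the Radon--Nikodym formula of Lemma \ref{lemma3.4Gantert}. By the regeneration representation of $\hat v$ (Lemma \ref{regenerativSpeed} together with the constant time change in Remark \ref{remarkNormalization}) one has $\hat v(\lambda,\mu)=\E[\tau_1]^{-1}\E^{\lambda}[X_{\tau_1}\cdot e_1]$ with $\E[\tau_1]=\tfrac{1}{\kappa}e^{\kappa/\mu}$ \emph{independent} of $\lambda$, so by Lemma \ref{lemma3.4Gantert}
\[
\hat v(\lambda,\mu)=\E[\tau_1]^{-1}\,\E^{0}\big[(R-L)\,W_\lambda\big],\qquad W_\lambda=\Big(\tfrac{2de^{\lambda}}{Z_\lambda}\Big)^{R_a}\Big(\tfrac{2de^{-\lambda}}{Z_\lambda}\Big)^{L_a}\Big(\tfrac{2d}{Z_\lambda}\Big)^{N-R_a-L_a},
\]
where $\P^{0}$ is the NVBRW with bias $0$ (so each jump attempt picks its direction uniformly among the $2d$ directions), $W_\lambda$ is the density of the $\lambda$-biased excursion up to $\tau_1$ with respect to it, and $N=N(\tau_1)$. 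Two structural facts drive the argument. First, a copy of an edge is appended to $I_t$ at \emph{every} jump attempt irrespective of direction, so $(N,\tau_1)$ is a functional of the birth--death chain $(|I_t|)_{t\ge0}$ only; hence under $\P^{0}$ it is independent of the direction choices and, by Lemma \ref{deathbirthchain} (cf. Lemma \ref{finiteTau}, Remark \ref{remarkNormalization}), $N$ has an exponential tail. Second, averaging the weight of each attempt over its uniform direction gives $\E^{0}[W_\lambda\mid N]=1$, hence $\E^{0}[h(N)W_\lambda]=\E^{0}[h(N)]$ for every measurable $h\ge0$; in particular $\E^{0}[|X_{\tau_1}\cdot e_1|\,W_\lambda]\le\E^{0}[NW_\lambda]=\E^{0}[N]<\infty$, so $F(\lambda):=\E^{0}[(R-L)W_\lambda]$ is well defined.

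For fixed $\lambda$ and a fixed realization, $\lambda\mapsto(R-L)W_\lambda$ is smooth with
\[
\partial_\lambda\big[(R-L)W_\lambda\big]=(R-L)\,W_\lambda\Big[(R_a-L_a)-N\,\tfrac{Z_\lambda'}{Z_\lambda}\Big],\qquad Z_\lambda'=e^{\lambda}-e^{-\lambda},
\]
and since $|R-L|\le N$, $|R_a-L_a|\le N$ and $|Z_\lambda'|\le Z_\lambda$, this is at most $2N^2W_\lambda$ in absolute value. To interchange $\partial_\lambda$ with $\E^{0}$, fix $\lambda>0$ and choose $\delta>0$ with $[\lambda-\delta,\lambda+\delta]\subset(0,\infty)$ and small enough that $\E^{0}[N^2e^{4\delta N}]<\infty$ — possible because $N$ has an exponential tail. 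Each base factor $b(s)\in\{\tfrac{2de^{s}}{Z_s},\tfrac{2de^{-s}}{Z_s},\tfrac{2d}{Z_s}\}$ of $W_s$ satisfies $\big|\tfrac{d}{ds}\log b(s)\big|\le2$, so $W_s\le e^{4\delta N}W_{\lambda-\delta}$ for all $s\in[\lambda-\delta,\lambda+\delta]$, whence
\[
\sup_{s\in[\lambda-\delta,\lambda+\delta]}\big|\partial_s\big[(R-L)W_s\big]\big|\le 2N^2e^{4\delta N}W_{\lambda-\delta}=:g .
\]
By the conditioning identity, $\E^{0}[g]=2\,\E^{0}\big[N^2e^{4\delta N}\,\E^{0}[W_{\lambda-\delta}\mid N]\big]=2\,\E^{0}[N^2e^{4\delta N}]<\infty$, so $g\in L^1(\P^{0})$ and the dominated-convergence form of Leibniz's rule gives that $F$ is differentiable at $\lambda$ with $F'(\lambda)=\E^{0}\big[(R-L)W_\lambda\big((R_a-L_a)-NZ_\lambda'/Z_\lambda\big)\big]$.

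Continuity of $F'$ follows from the same bound: if $s_n\to\lambda$ with $s_n\in[\lambda-\delta,\lambda+\delta]$, the integrands converge pointwise to the integrand at $\lambda$ and are dominated by $g\in L^1(\P^{0})$, so $F'(s_n)\to F'(\lambda)$; as $\lambda>0$ was arbitrary, $F'$ is continuous on $(0,\infty)$ and $\hat v(\cdot,\mu)=F(\cdot)/\E[\tau_1]$ is $C^1$. The only genuine difficulty — and the single point at which leaving dynamical percolation for a general $q$ needs care — is the integrability of the dominating function: the crude estimates $|R-L|,|R_a-L_a|\le N$ introduce an unbounded weight $W_s$ when one passes from a single $\lambda$ to a neighborhood, and $N$ does not have all exponential moments. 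This is resolved exactly by the two structural facts above: the exact normalization $\E^{0}[W_\lambda\mid N]=1$ removes $W_{\lambda-\delta}$ after conditioning on $N$, and the exponential tail of $N$ then absorbs the residual factor $N^2e^{4\delta N}$ once $\delta$ is taken small.
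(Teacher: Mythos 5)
Your proof is correct and follows essentially the same route as the paper, which simply invokes Lemma \ref{lemma3.4Gantert} together with the argument of Lemma 3.5 in \cite{percolation}: rewrite $\hat v(\lambda,\mu)$ via the change of measure to $\P^{0}$ (your $W_\lambda=e^{\lambda(R_a-L_a)}(2d/Z_\lambda)^N$ is the intended density; the paper's displayed $e^{R_a-L_a}$ is a typo) and differentiate under the expectation, with the domination supplied by the exponential tail of $N$ and the normalization $\E^{0}[W_\lambda\mid N]=1$. You have merely written out in full the dominated-convergence step that the paper delegates to the reference.
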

\begin{proof}
    See the proof of Lemma 3.5 in \cite{percolation} and use Lemma \ref{lemma3.4Gantert}.
\end{proof}
\begin{thm}\label{thmDerivative}
    Let $\mu>0$ then there exists a $\lambda_0$ and constants $c,C \in \mathbb{R}$ such that for all $\lambda \geq \lambda_0$
    \[|\frac{d}{d\lambda}\hat{v}(\lambda,\mu) - Ce^{-\lambda}| \leq c e^{-2\lambda}.\]
\end{thm}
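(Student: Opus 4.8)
The plan is to differentiate the regeneration‑block formula for $\hat v$ and then extract its expansion in $e^{-\lambda}$. Since $\E[\tau_1]=\tfrac1\kappa e^{\kappa/\mu}$ is independent of $\lambda$ (Remark~\ref{remarkNormalization}), we have $\hat v(\lambda,\mu)=\kappa e^{-\kappa/\mu}\,\E^{\lambda}[X_{\tau_1}\cdot e_1]$, and by Lemma~\ref{continouslydifferentiable} this is $C^{1}$ in $\lambda$, so it suffices to understand $\tfrac{d}{d\lambda}\E^{\lambda}[X_{\tau_1}\cdot e_1]$. I would start from the change‑of‑measure representation of Lemma~\ref{lemma3.4Gantert}, $\E^{\lambda}[X_{\tau_1}\cdot e_1]=\E^{0}[(R-L)\,e^{\lambda(R_a-L_a)}(2d/\Zl)^{N}]$, in which $R,L,R_a,L_a,N$ are functionals of the $\lambda$‑free process under $\P^{0}$, so the $\lambda$‑dependence on the right side is entirely explicit through $\Zl=e^{\lambda}+e^{-\lambda}+2d-2$. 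Differentiating termwise in $\lambda$ gives
\[
\frac{d}{d\lambda}\hat v(\lambda,\mu)=\kappa e^{-\kappa/\mu}\,\E^{0}\Big[(R-L)\,e^{\lambda(R_a-L_a)}\Big(\tfrac{2d}{\Zl}\Big)^{N}\Big((R_a-L_a)-N\,\tfrac{e^{\lambda}-e^{-\lambda}}{\Zl}\Big)\Big].
\]
The interchange (which also reproves Lemma~\ref{continouslydifferentiable}) is justified block‑length by block‑length: conditioning on $\{N=n\}$, the directions of the $n$ attempts are i.i.d.\ uniform on the $2d$ neighbours and independent of $n$, so $\E^{0}\big[e^{\lambda(R_a-L_a)}(2d/\Zl)^{N}\mid N=n\big]=(\Zl/2d)^{n}(2d/\Zl)^{n}=1$ exactly; hence the $n$‑th summand and its $\lambda$‑derivative are bounded by $n\,\P^{0}(N=n)$ and $2n^{2}\,\P^{0}(N=n)$, uniformly in $\lambda$, and these are summable because $N=N(\tau_1)$ has exponential tails.

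For the asymptotics I would write $\Zl=e^{\lambda}\big(1+(2d-2)e^{-\lambda}+e^{-2\lambda}\big)$ and expand. If a block has $L_a$ attempts in direction $-e_1$ and $C_{\perp}$ transverse attempts, then $e^{\lambda(R_a-L_a)}(2d/\Zl)^{N}=(2d)^{N}e^{-\lambda(2L_a+C_{\perp})}(1+O(Ne^{-\lambda}))$, so only configurations with $2L_a+C_{\perp}\le 1$ contribute at orders $1$ and $e^{-\lambda}$. On $G=\{2L_a+C_{\perp}=0\}$ (every attempt forward) the prefactor is $(2d)^{N}(1+O(Ne^{-\lambda}))$ but the bracket equals $N\big(1-\tfrac{e^{\lambda}-e^{-\lambda}}{\Zl}\big)=N(2d-2)e^{-\lambda}+O(Ne^{-2\lambda})$; on $G_1=\{2L_a+C_{\perp}=1\}$ (one transverse attempt, the rest forward) the prefactor is $(2d)^{N}e^{-\lambda}(1+O(Ne^{-\lambda}))$ and the bracket equals $-1+O(Ne^{-\lambda})$; configurations with $2L_a+C_{\perp}\ge 2$ contribute $O(e^{-2\lambda})$ after summation. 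Collecting the two surviving families,
\[
\frac{d}{d\lambda}\hat v(\lambda,\mu)=C\,e^{-\lambda}+O(e^{-2\lambda}),\qquad C=\frac{1}{\E[\tau_1]}\Big((2d-2)\,\E^{0}\big[(R-L)N(2d)^{N}\mathds{1}_{G}\big]-\E^{0}\big[(R-L)(2d)^{N}\mathds{1}_{G_1}\big]\Big),
\]
which is a finite real constant (on $G$ one has $L=0$ and $|R-L|\le N$, on $G_1$ one has $|R-L|\le N$, and $N$ has exponential tails); using the change of measure $\E^{0}[(2d)^{N}f\,\mathds{1}_{G}]=\E[f(A)]$ with $A$ the totally asymmetric NVBRW, $C$ also admits a closed form in terms of the $\lambda=\infty$ process.

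The step I expect to be the main obstacle is the \emph{uniformity in $\lambda$} of these estimates. A naive ``differentiate under the expectation, then Taylor expand $\Zl$'' would require dominating a supremum over a $\lambda$‑neighbourhood of the integrand $(R-L)e^{\lambda(R_a-L_a)}(2d/\Zl)^{N}(\cdots)$, which produces uncontrolled factors $(2d)^{N}$; since the exponential decay rate of $N(\tau_1)$ under $\P^{0}$ can be smaller than $\log(2d)$ when $\mu$ is small, such a supremum need not be integrable. The remedy, and the technical heart of the argument, is to carry out every estimate only after conditioning on $\{N=n\}$ and to use the exact identity $\E^{0}[e^{\lambda(R_a-L_a)}(2d/\Zl)^{N}\mid N=n]=1$ together with its refinement $\E^{0}[e^{\lambda(R_a-L_a)}(2d/\Zl)^{N}\mathds{1}_{\{2L_a+C_{\perp}=j\}}\mid N=n]=O(n^{j}e^{-j\lambda})$ (constants not depending on $\lambda$); this turns each level‑$n$ contribution into something controlled by $n^{k}\P^{0}(N=n)$, and summing in $n$ yields the $O(e^{-2\lambda})$ remainder for $\lambda$ above a threshold $\lambda_0=\lambda_0(\mu)$. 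One can run the same bookkeeping through the $\lambda\mapsto\lambda+\varepsilon$ coupling of Section~\ref{couplingDiffBias} instead, where $G_1$ corresponds to blocks containing a single very bad point and the factor $(2d-2)e^{-\lambda}$ appears as $\tfrac1\kappa\tfrac{d}{d\lambda}(e^{\lambda}/\Zl)$ up to $O(e^{-2\lambda})$; both routes give the same $C$.
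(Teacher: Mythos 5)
Your proposal is correct, but it takes a genuinely different route from the paper. The paper never differentiates the change-of-measure identity: it works with the two-bias coupling of Section \ref{couplingDiffBias}, writes $\hat{v}(\lambda+\varepsilon,\mu)-\hat{v}(\lambda,\mu)=\E[Y_{\tau_1}\cdot e_1-X_{\tau_1}\cdot e_1]/\E[\tau_1]$, conditions on the very bad/bad/good classification of the attempt times, and sends $\varepsilon\to0$ via l'H\^opital and dominated convergence; the $\lambda$-independence of the limiting block expectations is isolated in Lemma \ref{independentofLambda} as the functions $f(k,l),g(k,l)$, giving $C=\frac{2d-2}{\E[\tau_1]}\sum_{k}\P(N=k)\sum_{l\le k}(f(k,l)-g(k,l))$. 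You instead differentiate the representation of Lemma \ref{lemma3.4Gantert} (in its corrected form $e^{\lambda(R_a-L_a)}$; the paper's statement has a typo) directly in $\lambda$, justify the interchange by conditioning on $N=n$ and using the exact identity $\E^{0}[e^{\lambda(R_a-L_a)}(2d/\Zl)^{N}\mid N=n]=1$ (valid because the direction variables are independent of $N(\tau_1)$, which is determined by the attempt and removal clocks alone), and then read off the $e^{-\lambda}$ coefficient from the configurations with $2L_a+C_\perp\in\{0,1\}$. Your constant agrees with the paper's: conditioning on $N=k$ and on the location of the single transverse attempt converts $(2d-2)\E^{0}[(R-L)N(2d)^{N}\mathds{1}_{G}]-\E^{0}[(R-L)(2d)^{N}\mathds{1}_{G_1}]$ into exactly $(2d-2)\sum_k\P(N=k)\sum_{l\le k}(f(k,l)-g(k,l))$. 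What your route buys is directness: it reproves Lemma \ref{continouslydifferentiable} in passing, avoids the $\varepsilon\to0$ bookkeeping, and even removes the need for a threshold in the remainder estimate; what the paper's route buys is reuse of the coupling machinery already built for the large-$\mu$ monotonicity result, and it is the argument of \cite{percolation} adapted verbatim. One small refinement to your remainder control: summing your $j$-refinement $O(n^je^{-j\lambda})$ as a geometric series in $j$ degenerates when $n\gtrsim e^{\lambda}$, so it is cleaner to bound the whole event $\{2L_a+C_\perp\ge2\}$ at once — an explicit computation with the i.i.d.\ direction variables gives $\E^{0}\bigl[e^{\lambda(R_a-L_a)}(2d/\Zl)^{n}\mathds{1}_{\{2L_a+C_\perp\ge2\}}\mid N=n\bigr]=1-(e^{\lambda}/\Zl)^{n}\bigl(1+n(2d-2)e^{-\lambda}\bigr)\le C_d\,n^{2}e^{-2\lambda}$ uniformly in $\lambda$, and summing against $n^{2}\P(N=n)$ (finite fourth moment of $N$ by Lemma \ref{finiteTau} and Remark \ref{remarkNormalization}) gives the $O(e^{-2\lambda})$ remainder with no constraint on $\lambda_0$.
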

We define the following events:
\begin{itemize}
    \item $V = \{T^v_1 \leq \tau_1\} \cap \{T^v_2 > \tau_1\}$ the event that there is exactly one very bad point up to $\tau_1$,
    \item $V_l = V \cap \{T_l = T^v_1\}$ the event that tunique very bad point up to $\tau_1$ is the $l$-th attempted jump,
    \item $G = T^b_1 > \tau_1$ the event that there are no bad points up to time $\tau_1$,
    \item $R = \left\{ \text{at } T^v_1 \text{ the attempted jump is in direction } \pm e_i \text{ for } i \in \{2,...,d\} \right\}$, we will denote the rate of this event by $r_r = \kappa (2d-2)(Z_{\lambda}^{-1}-Z_{\lambda+\varepsilon}^{-1})$.
\end{itemize}
\begin{lem}\label{independentofLambda}
    There exists a $c >0$ such that for $\mu >0$ and for all $k \in \mathbb{N}$ and $l \leq k$
    \[\P(T^b_1 \leq \tau_1|N=k,V_l) \leq (k-1) r_b  \quad \text{ and } \quad \P(R^c|N=k,V_l,T^b_1 > \tau_1) \leq c e^{-\lambda}.\] 
    Moreover there exist functions $f,g:\mathbb{N} \times \mathbb{N} \rightarrow [0,\infty)$ which do not depend on $\lambda$ or on $\varepsilon$, such that 
    \[\E\left[Y_{\tau_1}\cdot e_1 | N=k ,V_l,T^b_1 > \tau_1\right] = f(k,l) \quad \text{ and } \quad \E\left[X_{\tau_1}\cdot e_1 | N=k ,V_l,T^b_1 > \tau_1,R\right] = g(k,l).\]
\end{lem}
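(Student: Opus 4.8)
The plan is to run the coupling of subsection~\ref{couplingDiffBias} and argue along the lines of section~4 of \cite{percolation}, with $k\in\N$ and $l\le k$ fixed. I would first record the structural fact that the labelling of the points of $\mathcal{P}$ as good/bad/very bad -- and, at a very bad point, whether case~(2) (the event $R$) or case~(4) occurs -- depends only on the i.i.d.\ direction--uniforms carried by the points of $\mathcal{P}$, hence is independent of $\mathcal{P}$ and of the removal process $\mathcal{R}$. Since $|I^X_t|=|I^Y_t|$ for all $t$ and this common process is built from $\mathcal{P}$ (births at rate $\kappa$) and $\mathcal{R}$ (rate $\mu|I^X_t|$), the pair $(\mathcal{P},\mathcal{R})$ is independent of the labelling, of the edge values and of the success--uniforms, and therefore so is $(N,\tau_1)$, which is a function of $(\mathcal{P},\mathcal{R})$; moreover (Remark~\ref{remarkNormalization}(ii)) the law of $(N,\tau_1)$ involves neither $\lambda$ nor $\varepsilon$. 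In particular, conditionally on $\{N=k\}$ the labels of $T_1,\dots,T_k$ are i.i.d.\ with probabilities $r_g/\kappa,r_b/\kappa,r_v/\kappa$, a very bad point is of case~(2) with probability $r_r/r_v$ independently of everything else, and conditioning on colour events leaves the conditional law of $\mathcal{P}|_{[0,\tau_1]},\mathcal{R}|_{[0,\tau_1]}$ given $\{N=k\}$ unchanged.

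For the two probability bounds: given $\{N=k\}\cap V_l$ the very bad point is $T_l$ and the other $k-1$ attempts are i.i.d.\ good/bad, each bad with conditional probability $r_b/(r_g+r_b)$, which is $\le r_b$ for $\lambda$ large (as $r_g+r_b=\kappa-r_v\to\kappa$); a union bound over these attempts yields $\P(T^b_1\le\tau_1\mid N=k,V_l)\le 1-(1-r_b/(r_g+r_b))^{k-1}\le(k-1)r_b$. For the second bound, conditioning additionally on $\{T^b_1>\tau_1\}$ does not affect the case of $T_l$, so $\P(R^c\mid N=k,V_l,T^b_1>\tau_1)=(r_v-r_r)/r_v$; I would then compute $r_v-r_r=\kappa(e^{-\lambda}Z_\lambda^{-1}-e^{-(\lambda+\varepsilon)}Z_{\lambda+\varepsilon}^{-1})\le\kappa e^{-2\lambda}$ and, integrating the derivative of $x\mapsto e^x/Z_x$, bound $r_v$ below by a constant times $e^{-\lambda}(1-e^{-\varepsilon})$; since $r_v-r_r$ carries a matching factor $(1-e^{-2\varepsilon})=(1-e^{-\varepsilon})(1+e^{-\varepsilon})$, the quotient is $\le c\,e^{-\lambda}$ uniformly in $\varepsilon$.

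For the two identities the key point is that conditioning on $E:=\{N=k\}\cap V_l\cap\{T^b_1>\tau_1\}$ (resp.\ on $E\cap R$) \emph{fixes} the colour labels of $T_1,\dots,T_k$ -- all good except $T_l$, which is very bad (and on $E\cap R$ of case~(2)) -- and with the labels fixed the attempted directions are forced and $\lambda,\varepsilon$-free: $Y$ attempts a jump in direction $e_1$ at every one of $T_1,\dots,T_k$, while on $E\cap R$ the walk $X$ attempts $e_1$ at each $T_i$ with $i\ne l$ and a uniformly chosen perpendicular direction at $T_l$. By the first step, conditioning on $E$ does not alter the (already $\lambda,\varepsilon$-free) law of the attempt and removal times before $\tau_1$; and the remaining randomness -- the edge values (a fresh draw of $\tilde\omega\sim q$ at the first examination of an edge, refreshed from $q$ at the rate-$\mu$ points dictated by the infected set) and the uniform-$[0,\kappa]$ success variables -- carries no $\lambda$ or $\varepsilon$ either. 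Hence the conditional laws of $(Y_s)_{s\le\tau_1}$ given $E$ and of $(X_s)_{s\le\tau_1}$ given $E\cap R$ are functions of $(k,l)$ alone (and of the fixed data $d,\mu,q,\kappa$), so $f(k,l):=\E[Y_{\tau_1}\cdot e_1\mid E]$ and $g(k,l):=\E[X_{\tau_1}\cdot e_1\mid E\cap R]$ do not depend on $\lambda$ or $\varepsilon$.

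The main obstacle is precisely this last step, namely keeping track of where the $\lambda,\varepsilon$-dependence resides: the rates $r_g,r_b,r_v,r_r$ all depend on $\lambda$ and $\varepsilon$, so one has to check carefully that (i) $(N,\tau_1)$ is genuinely independent of the labelling, (ii) conditioning on $V_l\cap\{T^b_1>\tau_1\}$ (and $R$) determines \emph{every} label among $T_1,\dots,T_k$, including those of attempts occurring after $T^v_1$, which -- being good points -- remain $e_1$-attempts for both walks, so that no $\lambda,\varepsilon$-randomness survives in the directions, and (iii) the features distinguishing $X$ and $Y$ after $T^v_1$ (independent environments and independent removal choices) contribute nothing $\lambda,\varepsilon$-dependent. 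The probability estimates are then routine given the independence of the labelling in (i).
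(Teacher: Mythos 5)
Your proposal is correct and follows essentially the same route as the paper: you use the independence of the good/bad/very-bad labelling from $(\mathcal{P},\mathcal{R})$ (hence from $N$ and $\tau_1$) to get the conditional i.i.d.\ label structure and the union bound, and you observe that conditioning on $\{N=k\}\cap V_l\cap\{T^b_1>\tau_1\}$ (and $R$) freezes every attempted direction while leaving the $\lambda,\varepsilon$-free joint law of the attempt times, removal times, conductance values and success uniforms unchanged, which is exactly the paper's argument for the existence of $f(k,l)$ and $g(k,l)$. If anything, your handling of the second estimate is more careful than the paper's one-line assertion (which bounds $r_r/r_v$ where $(r_v-r_r)/r_v$ is meant), since you identify $\P(R^c\mid N=k,V_l,T^b_1>\tau_1)=(r_v-r_r)/r_v$ and bound it explicitly; note only that, as in the paper, this step implicitly uses $d\ge 2$.
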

\begin{proof}
    We recall that the distribution of the $N$ is independent of the coloring (in good, bad, and very bad points) of the Poisson process $\mathcal{P}$ so conditioned on $N = k$ and $V_l$ for some $l\leq k$ we get that $\forall i \leq k$ with $l \neq i$ we have that $t_i$ has a probability $\frac{r_b}{\kappa}$ of being bad. By union bound we get:
    \[\P(T^b_1 \leq \tau_1|N=k,V_l) \leq (k-1) r_b\] 
    and using that for some $c>0$ $\frac{r_r}{r_v}  \leq c e^{-\lambda}$
    \[\P(R^c|N=k,V_l,T^b_1 > \tau_1) \leq c e^{-\lambda}.\] 
    Further on the event $A = \{N=k\} \cap V_l \cap \{T^b_1 > \tau_1\} \cap R$ we have that for $(U_i)_{i\geq 0}$ the sequence of independent  Uniform$[0,\kappa]$ random variable according to which $X$ and $Y$ decide whether they jump or not :
     \[Y_{\tau_1}\cdot e_1 = \sum_{i = 1}^{k}\mathds{1}_{[0,U_i]}(\omega_{T_i}(Y_{T_i^-},Y_{T_i^-}+e_1))\]
    and
     \[X_{\tau_1}\cdot e_1 = \sum_{i = 1, i\neq l}^{k}\mathds{1}_{[0,U_i]}(\omega_{T_i}(X_{T_i^-},X_{T_i^-}+e_1)).\]
    Further we have that 
    \[\mathcal{L}(T_1,...,T_k,\omega,U_1,...,U_k | N = k,  V_l , T^b_1 > \tau_1, R) =\mathcal{L}(T_1,...,T_k,\omega,U_1,...,U_k | N = k) .\]
    This means that the law of $(T_1,...,T_k,\omega,U_1,...,U_k)$ conditioned on $A$ is independent of $\lambda$. But under $A$ the process $Y$ becomes a walk that only attempts jumps to the right at the times $T_1,...,T_k$, and X attempts jumps to right at times $T_1,...,T_{l-1},T_{l+1},...,T_k$ and at $T_l$ it attempts a jump in one of the $2d-2$ direction. So there are functions $f,g:\mathbb{N} \times \mathbb{N} \rightarrow [0,\infty)$ which do not depend on $\lambda$ or on $\varepsilon$, such that 
    \[\E\left[Y_{\tau_1}\cdot e_1 | N=k ,V_l,T^b_1 > \tau_1\right] = f(k,l)\]
    and
    \[\E\left[X_{\tau_1}\cdot e_1 | N=k ,V_l,T^b_1 > \tau_1,R\right] = g(k,l).\]
\end{proof}
\begin{proof}[Theorem \ref{thmDerivative}]
    We recall from Lemma \ref{regenerativSpeed} and Remark \ref{remarkNormalization} that 
    \[\hat{v}(\lambda+\varepsilon,\mu) - \hat{v}(\lambda,\mu) = \frac{\E\left[Y_{\tau_1}\cdot e_1 - X_{\tau_1}\cdot e_1\right]}{\E\left[\tau_1\right]}.\]
    As 
    \[\E\left[Y_{\tau_1}\cdot e_1 - X_{\tau_1}\cdot e_1| T^v_1 > \tau_1 \right] = 0 . \]
    We have that 
    \[\hat{v}(\lambda+\varepsilon,\mu) - \hat{v}(\lambda,\mu) = \frac{\E\left[Y_{\tau_1}\cdot e_1 - X_{\tau_1}\cdot e_1|T^v_1 \leq \tau_1\right]}{\E\left[\tau_1\right]}\P(T^v_1 \leq \tau_1).\] 
    First we compute 
    \[\P(T^v_1 \leq \tau_1) = \E[\mathds{1}_{T^v_1 \leq \tau_1}] = \E[\E[\mathds{1}_{T^v_1 \leq \tau_1}|N]] = \E\left[1-\left(1-\frac{r_v}{\kappa}\right)^N\right].\]
    Recall that $\frac{r_v}{\kappa} = e^{\lambda+\varepsilon}/Z_{\lambda +\varepsilon} - e^{\lambda}/Z_{\lambda}$.\\
    Since $1-\left(1-\frac{r_v}{\kappa}\right)^N \leq \frac{r_v}{\kappa}N$, by dominated convergence and l'Hôpital we get that 
    \begin{eqnarray*}
        \lim_{\varepsilon \rightarrow 0} \frac{\P(T^v_1 \leq \tau_1)}{\varepsilon} &=& \E[\lim_{\varepsilon \rightarrow 0} N \left(\frac{d}{d\varepsilon}\frac{e^{\lambda+\varepsilon}}{Z_{\lambda +\varepsilon}}\right) (1 - \frac{r_v}{\kappa})^{N-1}]\\
        &=& \E[\lim_{\varepsilon \rightarrow 0} N \frac{e^{\lambda+\varepsilon}Z_{\lambda +\varepsilon} - (e^{\lambda +\varepsilon} - e^{-(\lambda+\varepsilon)}) e^{\lambda+\varepsilon} }{Z_{\lambda +\varepsilon}^2} (1 - \frac{r_v}{\kappa})^{N-1}] \\
        &=& \E[\lim_{\varepsilon \rightarrow 0} N \frac{(2d-2)e^{\lambda+\varepsilon} + 2}{Z_{\lambda +\varepsilon}^2} (1 - \frac{r_v}{\kappa})^{N-1}] \\
        &=& \E[N \frac{(2d-2)e^{\lambda} + 2}{Z_{\lambda}^2}] = (2d-2) e^{-\lambda} \E[N] + O(e^{-2\lambda}).
    \end{eqnarray*}
    Next we want to show that for some constant $C \geq 0$ depending on $q,d,\mu$ we have that
    \[\lim_{\varepsilon \rightarrow 0} \E\left[Y_{\tau_1}\cdot e_1 - X_{\tau_1}\cdot e_1|T^v_1 \leq \tau_1\right] = C + O(e^{-\lambda}).\]
    First we look at 
    \begin{eqnarray*}
        \P(T_2^v > \tau_1|T_1^v \leq \tau_1 ) = \frac{\E\left[N \frac{r_v}{\kappa} \left(1-\frac{r_v}{\kappa}\right)^N\right]}{\E\left[1-\left(1-\frac{r_v}{\kappa}\right)^N\right]}.
    \end{eqnarray*}
    Then using L'Hôpital and then dominated convergence on both the numerator and the denominator yields that 
    \begin{eqnarray} \label{equation1}
        \lim_{\varepsilon \rightarrow 0} \P(T_2^v > \tau_1,|T_1^v \leq \tau_1 ) = 1.
    \end{eqnarray}
    Then we have 
    \begin{eqnarray}\label{equation4.19}
        \E[Y_{\tau_1}\cdot e_1 | T_1^v \leq \tau_1] = &&\E[Y_{\tau_1}\cdot e_1 | T_1^v \leq \tau_1,T_2^v > \tau_1 ]\P(T_2^v > \tau_1|T_1^v \leq \tau_1 ) \\
        &+& \E[Y_{\tau_1}\cdot e_1 |T_2^v \leq \tau_1 ]\P(T_2^v \leq \tau_1|T_1^v \leq \tau_1 ).
    \end{eqnarray}
    As $Y_{\tau_1}\cdot e_1 \leq N$ we get that 
    \begin{eqnarray*}
        \limsup_{\varepsilon \rightarrow 0} \E[Y_{\tau_1}\cdot e_1 |T_2^v \leq \tau_1 ] \leq \lim_{\varepsilon \rightarrow 0} \E[N|T_2^v \leq \tau_1 ] = \lim_{\varepsilon \rightarrow 0} \frac{\E\left[N\left(1-(1-\frac{r_v}{\kappa})^N - N\frac{r_v}{\kappa}(1-\frac{r_v}{\kappa})^{N-1}\right)\right]}{\E\left[1-(1-\frac{r_v}{\kappa})^N - N\frac{r_v}{\kappa}(1-\frac{r_v}{\kappa})^{N-1}\right]}.
    \end{eqnarray*}
    Using again l'Hôpital's rule and dominated convergence 
    \begin{eqnarray*}
        \lim_{\varepsilon \rightarrow 0} \frac{\E\left[N\left(1-(1-\frac{r_v}{\kappa})^N - N\frac{r_v}{\kappa}(1-\frac{r_v}{\kappa})^{N-1}\right)\right]}{\E\left[1-(1-\frac{r_v}{\kappa})^N - N\frac{r_v}{\kappa}(1-\frac{r_v}{\kappa})^{N-1}\right]} = \frac{\E[N^2(N-1)]}{\E[N(N-1)]}.
    \end{eqnarray*}
    Since $\E[N] > 1$ and $N$ has exponential tails by Lemma \ref{finiteTau} and equation \eqref{equation1} we get that 
    \[\lim_{\varepsilon \rightarrow 0}\E[Y_{\tau_1}\cdot e_1 |T_2^v \leq \tau_1 ]\P(T_2^v \leq \tau_1|T_1^v \leq \tau_1 ) = 0 ,\]
    In a similar we get that 
    \begin{eqnarray*}
        \E[X_{\tau_1}\cdot e_1 | T_1^v \leq \tau_1] = &&\E[X_{\tau_1}\cdot e_1 | T_1^v \leq \tau_1,T_2^v > \tau_1 ]\P(T_2^v > \tau_1|T_1^v \leq \tau_1 ) \\
        &+& \E[X_{\tau_1}\cdot e_1 |T_2^v \leq \tau_1 ]\P(T_2^v \leq \tau_1|T_1^v \leq \tau_1 ),
    \end{eqnarray*}
    with 
    \[\lim_{\varepsilon \rightarrow 0}\E[X_{\tau_1}\cdot e_1 |T_2^v \leq \tau_1 ]\P(T_2^v \leq \tau_1|T_1^v \leq \tau_1 ) = 0 .\]
    Next we want to study the first expectation on the right-hand side in equation \eqref{equation1}.
    \begin{eqnarray*}
        \E[Y_{\tau_1}\cdot e_1 | T_1^v \leq \tau_1,T_2^v > \tau_1 ] = \sum_{k = 1}^{\infty} \sum_{l\leq k} \E \left[Y_{\tau_1}\cdot e_1| N = k, V_l \right] \P(N=k,V_l | T_1^v \leq \tau_1,T_2^v > \tau_1 )
    \end{eqnarray*}
    and we split $\E \left[Y_{\tau_1}\cdot e_1| N = k, V_l \right]$ into 
    \begin{eqnarray*}
        &&\E \left[Y_{\tau_1}\cdot e_1| N = k, V_l \right] = \E \left[Y_{\tau_1}\cdot e_1| N = k, V_l, T^g_1 > \tau_1 \right] \\
        &+& \left( \E \left[Y_{\tau_1}\cdot e_1| N = k, V_l, T^g_1 \leq \tau_1 \right] - \E \left[Y_{\tau_1}\cdot e_1| N = k, V_l, T^g_1 > \tau_1 \right]\right) \P(T^g_1 \leq \tau_1 | N=k,V_l).
    \end{eqnarray*}
    Using that $Y_{\tau_1} \cdot e_1 \leq N$ and using Lemma \ref{independentofLambda}
    \begin{eqnarray}
        \E \left[Y_{\tau_1}\cdot e_1| N = k, V_l \right] = f(k,l) + O(k^2 e^{-\lambda}),
    \end{eqnarray}
    so 
    \begin{eqnarray}\label{equation4.22}
        \E[Y_{\tau_1}\cdot e_1 | T_1^v \leq \tau_1,T_2^v > \tau_1 ] = \sum_{k = 1}^{\infty} \sum_{l\leq k} \left(f(k,l) + O(k^2 e^{-\lambda})\right) \P(N=k,V_l | T_1^v \leq \tau_1,T_2^v > \tau_1 ).
    \end{eqnarray}
    Further 
    \begin{eqnarray*}
        \P(N=k,V_l | T_1^v \leq \tau_1,T_2^v > \tau_1 ) = \frac{\P(N=k) \frac{r_v}{\kappa} (1-\frac{r_v}{\kappa})^k}{\E\left[N\frac{r_v}{\kappa} (1-\frac{r_v}{\kappa})^N\right]} = \frac{\P(N=k) (1-\frac{r_v}{\kappa})^k}{\E\left[N(1-\frac{r_v}{\kappa})^N\right]}
    \end{eqnarray*}
    and by dominated convergence 
    \begin{eqnarray} \label{equation4.23}
        \lim_{\varepsilon \rightarrow 0} \P(N=k,V_l | T_1^v \leq \tau_1,T_2^v > \tau_1 ) = \frac{\P(N=k)}{\E[N]}.
    \end{eqnarray}
    Then using that $Y_{\tau_1}\cdot e_1 \leq N $ implies that $f(k,l) \leq k$ gives us if we plug equation \eqref{equation4.22} into equation \eqref{equation4.19} and then use dominated convergence to take the limit with equations \eqref{equation1} \eqref{equation4.23} 
    \begin{eqnarray*}
        \lim_{\varepsilon \rightarrow 0} \E[Y_{\tau_1}\cdot e_1| T^v_1 \leq \tau_1] &=& \sum_{k=1}^{\infty} \sum_{l\leq k} f(k,l) \frac{\P(N=k)}{\E[N]} + O(e^{-\lambda} \sum_{k=1}^{\infty} k^3 \frac{\P(N=k)}{\E[N]} )\\
         &=&\sum_{k=1}^{\infty} \sum_{l\leq k} f(k,l) \frac{\P(N=k)}{\E[N]} + O(e^{-\lambda}) .
    \end{eqnarray*}
    We can similarly get that 
    \begin{eqnarray*}
        \lim_{\varepsilon \rightarrow 0} \E[X_{\tau_1}\cdot e_1| T^v_1 \leq \tau_1] &=& \sum_{k=1}^{\infty} \sum_{l\leq k} g(k,l) \frac{\P(N=k)}{\E[N]} + O(e^{-\lambda} \sum_{k=1}^{\infty} k^3 \frac{\P(N=k)}{\E[N]} ) \\
        &=&\sum_{k=1}^{\infty} \sum_{l\leq k} g(k,l) \frac{\P(N=k)}{\E[N]} + O(e^{-\lambda}) .
    \end{eqnarray*}
    Altogether this gives us that 
    \begin{eqnarray*}
        \lim_{\varepsilon \rightarrow 0}  \frac{\hat{v}(\lambda+\varepsilon,\mu) - \hat{v}(\lambda,\mu)}{\varepsilon} = (2d-2) \frac{\E[N]}{\E[\tau_1]} \sum_{k=1}^{\infty} \sum_{l\leq k} (f(k,l) - g(k,l)) \frac{\P(N=k)}{\E[N]}  e^{-\lambda} + O(e^{-2\lambda}) .
    \end{eqnarray*}
\end{proof}
\begin{thm}\label{assymptoticMonotonicity}
    Let $d \geq 2$, $\mu > 0$ and $m = \E[\tilde{\omega}]$, with $\tilde{\omega} \sim q$. Assume that
    \[(m+\mu)\frac{\E\left[\frac{1}{(\mu + \tilde{\omega})^2}\right]\E\left[ \frac{\tilde{\omega}}{\mu + \tilde{\omega}}\right] -\E\left[ \frac{\tilde{\omega}}{(\mu + \tilde{\omega})^2}\right] \E\left[\frac{1}{\mu + \tilde{\omega}}\right] }{\E\left[\frac{1}{\mu + \tilde{\omega}}\right]^2} - \frac{\E\left[\frac{\tilde{\omega}}{\mu + \tilde{\omega}}\right]}{\E\left[\frac{1}{\mu + \tilde{\omega}}\right]} \neq 0.\] 
    Then there is a $\lambda_0 \geq 0$ such that $\hat{v}(\lambda,\mu)$ is monotonous in $\lambda$ on $(\lambda_0,\infty)$.
    On $(\lambda_0,\infty)$ $\hat{v}(\lambda,\mu)$ is \begin{itemize}
    \item increasing if $(m+\mu)\frac{\E\left[\frac{1}{(\mu + \tilde{\omega})^2}\right]\E\left[ \frac{\tilde{\omega}}{\mu + \tilde{\omega}}\right] -\E\left[ \frac{\tilde{\omega}}{(\mu + \tilde{\omega})^2}\right] \E\left[\frac{1}{\mu + \tilde{\omega}}\right] }{\E\left[\frac{1}{\mu + \tilde{\omega}}\right]^2} - \frac{\E\left[\frac{\tilde{\omega}}{\mu + \tilde{\omega}}\right]}{\E\left[\frac{1}{\mu + \tilde{\omega}}\right]}< 0$,
    \item decreasing if $(m+\mu)\frac{\E\left[\frac{1}{(\mu + \tilde{\omega})^2}\right]\E\left[ \frac{\tilde{\omega}}{\mu + \tilde{\omega}}\right] -\E\left[ \frac{\tilde{\omega}}{(\mu + \tilde{\omega})^2}\right] \E\left[\frac{1}{\mu + \tilde{\omega}}\right] }{\E\left[\frac{1}{\mu + \tilde{\omega}}\right]^2} - \frac{\E\left[\frac{\tilde{\omega}}{\mu + \tilde{\omega}}\right]}{\E\left[\frac{1}{\mu + \tilde{\omega}}\right]} > 0$.
    \end{itemize}
\end{thm}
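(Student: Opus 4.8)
The plan is to deduce this from the sharp derivative asymptotics of Theorem \ref{thmDerivative}, the only extra work being to identify the constant $C$ appearing there in closed form. Recall from Lemma \ref{continouslydifferentiable} that $\lambda\mapsto\hat v(\lambda,\mu)$ is $C^1$, and from Theorem \ref{thmDerivative} that there are $\lambda_0\geq 0$ and $c,C\in\mathbb{R}$ with $\bigl|\tfrac{d}{d\lambda}\hat v(\lambda,\mu)-Ce^{-\lambda}\bigr|\leq ce^{-2\lambda}$ for all $\lambda\geq\lambda_0$. If $C\neq 0$, then for $\lambda$ large enough $|C|e^{-\lambda}>ce^{-2\lambda}$, so $\tfrac{d}{d\lambda}\hat v(\lambda,\mu)$ keeps the sign of $C$ on a half-line $(\lambda_1,\infty)$; being continuous there, this forces $\hat v(\cdot,\mu)$ to be strictly monotone on $(\lambda_1,\infty)$, increasing if $C>0$ and decreasing if $C<0$. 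So it remains to compute the sign of $C$.

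To identify $C$, I would first note that Corollary \ref{corAssymtoticNVBRW} gives $\hat v(\lambda,\mu)=V_\infty+O(e^{-\lambda})$ with $V_\infty:=\E[\tilde\omega/(\mu+\tilde\omega)]/\E[1/(\mu+\tilde\omega)]$, so $\hat v(\lambda,\mu)\to V_\infty$ as $\lambda\to\infty$. Next, by Theorem \ref{thmDerivative} the derivative $\tfrac{d}{d\lambda}\hat v(\cdot,\mu)$ is dominated by $|C|e^{-\lambda}+ce^{-2\lambda}$ on $[\lambda_0,\infty)$ and is continuous by Lemma \ref{continouslydifferentiable}, hence integrable near $+\infty$; the fundamental theorem of calculus on $[\lambda,T]$ followed by $T\to\infty$ then gives, for $\lambda\geq\lambda_0$,
\[
\hat v(\lambda,\mu)=V_\infty-\int_\lambda^\infty\frac{d}{ds}\hat v(s,\mu)\,ds=V_\infty-\int_\lambda^\infty\bigl(Ce^{-s}+O(e^{-2s})\bigr)\,ds=V_\infty-Ce^{-\lambda}+O(e^{-2\lambda}).
\]
Comparing the $e^{-\lambda}$-coefficient with that in Corollary \ref{corAssymtoticNVBRW}, and rewriting the first-order coefficient of the latter by the Lemma stated immediately after Corollary \ref{corAssymtoticNVBRW}, we obtain $-C=(2d-2)A$, where $A$ is exactly the quantity in the statement of the theorem. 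Since $d\geq 2$ gives $2d-2>0$, this shows that $C\neq 0$ if and only if $A\neq 0$, and that $C$ and $A$ have opposite signs.

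Combining the two steps: if $A<0$ then $C>0$ and $\hat v(\cdot,\mu)$ is eventually increasing, while if $A>0$ then $C<0$ and $\hat v(\cdot,\mu)$ is eventually decreasing; taking for the $\lambda_0$ of the statement the threshold $\lambda_1$ above finishes the proof. The main point to be careful about — and the reason one cannot simply read monotonicity off the expansion of $\hat v$ in Corollary \ref{corAssymtoticNVBRW} — is that the $O(e^{-2\lambda})$ error term in that expansion could a priori have a derivative of order $e^{-\lambda}$, comparable to the leading term, so one genuinely must go through the uniform control on $\tfrac{d}{d\lambda}\hat v$ provided by Theorem \ref{thmDerivative}. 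Thus the heart of the argument already lies in Theorem \ref{thmDerivative}, and the only essentially new ingredient here is the elementary integration identity above together with the coefficient comparison.
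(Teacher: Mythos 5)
Your proposal is correct and takes essentially the same approach as the paper: deduce eventual monotonicity from the sign of the constant $C$ in Theorem \ref{thmDerivative}, and identify $C = -(2d-2)A$ (with $A$ the quantity in the statement) by comparing the $e^{-\lambda}$-coefficient with Corollary \ref{corAssymtoticNVBRW} rewritten via the subsequent lemma. The only cosmetic difference is that you integrate $\frac{d}{d\lambda}\hat{v}$ over $[\lambda,\infty)$, whereas the paper integrates over $[\lambda,2\lambda]$ and compares $\hat{v}(2\lambda,\mu)-\hat{v}(\lambda,\mu)$; both yield the same coefficient identification.
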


\begin{proof}
    Let $C$, $c$ be as in the proof of Theorem \ref{thmDerivative}. Then \[\left|\frac{d}{d\lambda}\hat{v}(\lambda,\mu) - Ce^{-\lambda}\right| \leq c e^{-2\lambda}\] and so for $\lambda > 0$ large enough as $\hat{v}(\lambda,\mu)$ is continuously differentiable in $\lambda$ by Lemma \ref{continouslydifferentiable}
    \begin{eqnarray*}
        \hat{v}(2\lambda,\mu)-\hat{v}(\lambda,\mu) = Ce^{-\lambda} + O(e^{-2\lambda}).
    \end{eqnarray*}
    Using Corollary \ref{corAssymtoticNVBRW} we get that \[C = - (2d-2)\left((m+\mu)\frac{\E\left[\frac{1}{(\mu + \tilde{\omega})^2}\right]\E\left[ \frac{\tilde{\omega}}{\mu + \tilde{\omega}}\right] -\E\left[ \frac{\tilde{\omega}}{(\mu + \tilde{\omega})^2}\right] \E\left[\frac{1}{\mu + \tilde{\omega}}\right] }{\E\left[\frac{1}{\mu + \tilde{\omega}}\right]^2} - \frac{\E\left[\frac{\tilde{\omega}}{\mu + \tilde{\omega}}\right]}{\E\left[\frac{1}{\mu + \tilde{\omega}}\right]}\right),\] so if $C\neq 0$ (which is our assumption) we get that $\frac{d}{d\lambda}\hat{v}(\lambda,\mu)$ is eventually of the same sign as $C$ for $\lambda $ large.
\end{proof}
\begin{cor}
    Let $d \geq 2$, $\mu > 0$, then there is a $\lambda_0 \geq 0$ such that $v(\lambda,Z_{\lambda} \mu)$ is increasing on $(\lambda_0,\infty)$. 
\end{cor}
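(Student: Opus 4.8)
The plan is to differentiate the identity $v(\lambda,\Zl\mu) = \Zl\,\hat{v}(\lambda,\mu)$ from Lemma~\ref{speedNVvsV} and show that the derivative is eventually strictly positive. Since $\Zl = e^{\lambda}+e^{-\lambda}+2d-2$ is smooth in $\lambda$ and $\lambda\mapsto\hat{v}(\lambda,\mu)$ is continuously differentiable by Lemma~\ref{continouslydifferentiable}, the map $\lambda\mapsto v(\lambda,\Zl\mu)$ is $C^1$ and
\[
\frac{d}{d\lambda}v(\lambda,\Zl\mu) = (e^{\lambda}-e^{-\lambda})\,\hat{v}(\lambda,\mu) + \Zl\,\frac{d}{d\lambda}\hat{v}(\lambda,\mu).
\]
I would then estimate the two summands separately as $\lambda\to\infty$.

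For the first summand, Corollary~\ref{corAssymtoticNVBRW} gives $\hat{v}(\lambda,\mu)\to \E[\tilde{\omega}/(\mu+\tilde{\omega})]/\E[1/(\mu+\tilde{\omega})]$, which is a strictly positive constant because $q\neq\delta_0$ forces $\P(\tilde{\omega}>0)>0$. Hence there are $\lambda_1\geq 0$ and $c_0>0$ with $\hat{v}(\lambda,\mu)\geq c_0$ for all $\lambda\geq\lambda_1$, and therefore $(e^{\lambda}-e^{-\lambda})\,\hat{v}(\lambda,\mu)\geq c_0(e^{\lambda}-e^{-\lambda})\to+\infty$. For the second summand, Theorem~\ref{thmDerivative} provides constants $C\in\mathbb{R}$, $c>0$ and some $\lambda_2\geq 0$ with $\bigl|\tfrac{d}{d\lambda}\hat{v}(\lambda,\mu)-Ce^{-\lambda}\bigr|\leq ce^{-2\lambda}$ for $\lambda\geq\lambda_2$, so
\[
\Bigl|\Zl\,\tfrac{d}{d\lambda}\hat{v}(\lambda,\mu)\Bigr| \leq (e^{\lambda}+e^{-\lambda}+2d-2)\bigl(|C|e^{-\lambda}+ce^{-2\lambda}\bigr) = |C| + O(e^{-\lambda}),
\]
which remains bounded. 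Adding the two estimates, $\frac{d}{d\lambda}v(\lambda,\Zl\mu)\to+\infty$, hence it is strictly positive for all $\lambda$ beyond some threshold $\lambda_0\geq\max(\lambda_1,\lambda_2)$, and $v(\lambda,\Zl\mu)$ is increasing on $(\lambda_0,\infty)$.

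There is no serious obstacle once these inputs are assembled; the one point to handle with care is that what is needed is a \emph{uniform} positive lower bound on $\hat{v}(\lambda,\mu)$ for large $\lambda$, not merely positivity at each fixed $\lambda$ as in Remark~\ref{remarkNormalization}(i), and this is exactly what the asymptotic expansion of Corollary~\ref{corAssymtoticNVBRW} supplies. Conceptually the statement is notable because it persists even in the regime where $\hat{v}(\lambda,\mu)$ is asymptotically \emph{decreasing} (Theorem~\ref{assymptoticMonotonicity}): the exponential growth of the normalising factor $\Zl$ dominates the $O(e^{-\lambda})$ correction to the NVBRW speed, so that the unnormalised VBRW speed $v(\lambda,\Zl\mu)$ is ultimately increasing regardless.
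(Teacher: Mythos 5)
Your proposal is correct and follows essentially the same route as the paper: differentiate $v(\lambda,Z_{\lambda}\mu)=Z_{\lambda}\hat{v}(\lambda,\mu)$ by the product rule, use Corollary \ref{corAssymtoticNVBRW} to get a uniform positive lower bound on $\hat{v}(\lambda,\mu)$ so that the term $(e^{\lambda}-e^{-\lambda})\hat{v}(\lambda,\mu)$ diverges, and use Theorem \ref{thmDerivative} to show $Z_{\lambda}\frac{d}{d\lambda}\hat{v}(\lambda,\mu)$ stays bounded. No gaps; this matches the paper's argument.
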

\begin{proof}
    Recall that \[v(\lambda,Z_{\lambda} \mu) = Z_{\lambda} \hat{v}(\lambda,\mu).\] As $Z_{\lambda}$ is continuously differentiable in $\lambda$ we get that \[\frac{d}{d\lambda}v(\lambda,Z_{\lambda}\mu) = Z_{\lambda}\frac{d}{d\lambda}\hat{v}(\lambda,\mu) + (e^{\lambda} - e^{-\lambda})\hat{v}(\lambda,\mu),\] which yields for $\lambda$ large enough
    \begin{eqnarray*}
        \left|\frac{d}{d\lambda}v(\lambda,Z_{\lambda}\mu) - e^{\lambda} \hat{v}(\lambda,\mu)\right| \leq e^{\lambda} \frac{d}{d\lambda}\hat{v}(\lambda,\mu) + O(e^{-\lambda}).
    \end{eqnarray*}
    Further using Corollary \ref{corAssymtoticNVBRW} and Theorem \ref{thmDerivative} we get that there is a constant $\tilde{C}$ such that for $\lambda$ large enough and $\tilde{\omega}\sim q$
    \begin{eqnarray*}
        \left|\frac{d}{d\lambda}v(\lambda,Z_{\lambda}\mu) - e^{\lambda} \frac{\E\left[\frac{\tilde{\omega}}{\mu + \tilde{\omega}}\right]}{\E\left[\frac{1}{\mu + \tilde{\omega}}\right]}\right| \leq \tilde{C} + O(e^{-\lambda}).
    \end{eqnarray*}
    So for $\lambda$ large enough \[\frac{d}{d\lambda}v(\lambda,Z_{\lambda}\mu) > 0\] and the speed is increasing.
\end{proof} 

\section{Open questions}
We here want to give some open questions related to this work.
\begin{itemize}
    \item How does the process behave in the critical case when \[\E\left[\frac{\tilde{\omega}(\tilde{\omega} - m)}{(\mu + \tilde{\omega})^2}\right]\E\left[ \frac{1}{\mu + \tilde{\omega}}\right] -\E\left[ \frac{\tilde{\omega}-m}{(\mu + \tilde{\omega})^2}\right] \E\left[\frac{\tilde{\omega}}{\mu + \tilde{\omega}}\right] - \E\left[\frac{\tilde{\omega}}{\mu + \tilde{\omega}}\right] \E\left[\frac{1}{\mu + \tilde{\omega}}\right] = 0\]
    This question has been studied for the dynamical percolation in \cite{olzhabayev2025biasedrandomwalkcritical}.
    \item In the regime where the speed is asymptotically monotone increasing, so\[\E\left[\frac{\tilde{\omega}(\tilde{\omega} - m)}{(\mu + \tilde{\omega})^2}\right]\E\left[ \frac{1}{\mu + \tilde{\omega}}\right] -\E\left[ \frac{\tilde{\omega}-m}{(\mu + \tilde{\omega})^2}\right] \E\left[\frac{\tilde{\omega}}{\mu + \tilde{\omega}}\right] - \E\left[\frac{\tilde{\omega}}{\mu + \tilde{\omega}}\right] \E\left[\frac{1}{\mu + \tilde{\omega}}\right] < 0,\] 
    is the speed monotone increasing on $[0,+\infty)$? This question has been studied in static uniform elliptic conductances in \cite{staticConductancesMonotonicity}.
    \item If we fix the bias $\lambda$ and take $\mu \rightarrow 0$ do we recover for uniformly elliptic environments the speed on static conductances?
\end{itemize}

\section{Appendix}
To prove Lemma \ref{deathbirthchain}, we first need a lemma about hitting time of random walks.
\begin{proof}[Proof of Lemma \ref{deathbirthchain}]\label{deathbirthchainproof}
    As the transition rates  of $(A_t)_{t \geq 0}$ are bounded away from zero we can look at the embedded discrete Markov chain $(A_n)_{n \geq 0}$ instead of the continuous time Markov process.
    
    Let $x =  \lceil\alpha \mu^{-1} \rceil + 1$, $T^x_1 = \inf\{n \geq 1: A_n = x\}$ and $T^x_{k+1} = \inf\{n >T^x_k: A_n = x\}$ for $k \geq 1$. For easier notation we will write $T^x_0 = 0 $. Further we define $N = \max\{k \geq 0 : T^x_k < \tau_1\}$.\\
    We now can write 
    \begin{eqnarray*}
        \tau_1 = \sum_{k = 0}^{N-1} \left(T^x_{k+1} - T^x_{k}\right) + \left(\tau_1 - T^x_N\right).
    \end{eqnarray*}
    Note that for $\left(T^x_{k+1} - T^x_{k}\right)_{k \geq 1}$ are i.i.d.\\
    First we see that $\left(\tau_1 - T^x_N\right)$ can be expressed as a hitting time (from $x$ to $0$ if $N \geq 1$ or from $0$ to $0$ if $N=0$) on a finite irreducible Markov chain. So there exists an $\varepsilon_1$ such that for all $\varepsilon < \varepsilon_1$,\; $\E[e^{\varepsilon\left(\tau_1 - T^x_N\right)}] < \infty$.\\
    The same way we also get that there exists $\varepsilon_2$ such that for all $\varepsilon < \varepsilon_2$,\; $\E[e^{\varepsilon T^x_1}] < \infty$.\\
    Next, we see that on the event $\{A_{T^x_k +1}  < x\}$, $T^{\geq x}_{k} -T^{ x}_{k}$ can be seen as a hitting time on a finite irreducible Markov chain, so there is an $\varepsilon_3$ such that for all $\varepsilon < \varepsilon_3$,\; $\E[e^{\varepsilon\left(T^{x}_{k} -T^{ x}_{k}\right)} \mathds{1}_{A_{T^x_k +1}  < x}] < \infty$.\\
    Next we look at $T^{ x}_{k+1}-T^{x}_{k}$ on the event $\{A_{T^x_k +1}  > x\}$ . Let $(X_n)_{n\geq 0}$ be a random walk with $X_0 = x$ a.s. and i.i.d. increments $(\zeta_i)_{i \geq 0}$ with \[\P(\zeta_1 = 1) = 1- \P(\zeta_1 = -1) = \frac{\alpha}{\alpha + x\mu} < 1/2.\]
    We can ten couple $X$ and $A$ such that for all $n \in \{0,...,T^{ x}_{k+1}-T^{x}_{k}\}$ we have that $A_{T^x_{k}+n} \leq X_n$. This means that for $H = \inf\{n \geq 0: X_n-X_0 = -1\}$ we have $T^{ x}_{k+1}-T^{x}_{k} \leq H$. Then using that $H$ has an exponential tail we get there exists an $\varepsilon_4$ such that for all $\varepsilon < \varepsilon_4$,\; $\E[e^{\varepsilon\left(T^{ x}_{k+1}-T^{x}_{k}\right)} \mathds{1}_{A_{T^x_k +1}  > x}] < \infty$.\\
    So for $\varepsilon < \min\{\varepsilon_2, \varepsilon_3,\varepsilon_4\}$ and all $k \geq 0$, we have $\E[e^{\varepsilon\left(T^{ x}_{k+1}-T^{x}_{k}\right)}] < \infty$, and by dominated convergence we get that $\lim_{\varepsilon \rightarrow 0 } \E[e^{\varepsilon\left(T^{ x}_{k+1}-T^{x}_{k}\right)}] = 1$.\\
    Further let $p = \prod_{k=1}^{x} \frac{\mu k}{\mu k +\alpha}$ then $\P(N = k) \leq \P(N \geq k) \leq (1-p)^k$. Then we can choose $\varepsilon > 0$ small enough such that using Cauchy schwarz inequaltiy we get for some $\rho <1$,
    \begin{eqnarray}
        \E[\prod_{k = 0}^{n-1} e^{\varepsilon\left(T^{ x}_{k+1}-T^{x}_{k}\right)} \mathds{1}_{N = n}] \leq \rho^{\frac{n}{2}}.
    \end{eqnarray}
    Then using dominated convergeance one gets
    \begin{eqnarray*}
        \E[\prod_{k = 0}^{N-1} e^{\varepsilon\left(T^{ x}_{k+1}-T^{x}_{k}\right)}] &=& \sum_{n = 0}^{\infty} \E[\prod_{k = 0}^{N-1} e^{\varepsilon\left(T^{ x}_{k+1}-T^{x}_{k}\right)}\mathds{1}_{N=n}] < \infty .\\
    \end{eqnarray*}
    Altogether we have that for $\tilde{\varepsilon} < \min\{\varepsilon, \varepsilon_1\}$, 
    \[\E[e^{\varepsilon \tau_1}] = \E[e^{\varepsilon\left(\tau_1 - T^x_N\right)}] \E[\prod_{k = 0}^{N-1} e^{\varepsilon\left(T^{ x}_{k+1}-T^{x}_{k}\right)}] < \infty.\]
\end{proof}

\textbf{Acknowledgements}\\
I am grateful to my supervisor, Nina Gantert, for her advice and feedback. I am also thankful to Carlo Scali for his helpful advice

\printbibliography

@article{percolation,
author = {Sebastian Andres and Nina Gantert and Dominik Schmid and Perla Sousi},
title = {{Biased random walk on dynamical percolation}},
volume = {52},
journal = {The Annals of Probability},
number = {6},
publisher = {Institute of Mathematical Statistics},
pages = {2051 -- 2078},
keywords = {biased random walk, Dynamical percolation, Regeneration times},
year = {2024},
doi = {10.1214/23-AOP1679},
URL = {https://doi.org/10.1214/23-AOP1679}
}

@book {BirthDeath,
    AUTHOR = {Karlin, Samuel},
     TITLE = {A first course in stochastic processes},
 PUBLISHER = {Academic Press, New York-London},
      YEAR = {1966},
     PAGES = {xi+502},
   MRCLASS = {60.00 (60.40)},
  MRNUMBER = {208657},
MRREVIEWER = {P.\ E.\ Ney},
}

@article{BirthDeathChain,
title = {Introduction to Discrete Time Birth Death Models},
author={Zhong Li},
year = {2013},
url = {https://math.dartmouth.edu/~pw/math100w13/li.pdf},
}

@article {unbiasedPercolation,
    AUTHOR = {Peres, Yuval and Stauffer, Alexandre and Steif, Jeffrey E.},
     TITLE = {Random walks on dynamical percolation: mixing times, mean
              squared displacement and hitting times},
   JOURNAL = {Probab. Theory Related Fields},
  FJOURNAL = {Probability Theory and Related Fields},
    VOLUME = {162},
      YEAR = {2015},
    NUMBER = {3-4},
     PAGES = {487--530},
      ISSN = {0178-8051,1432-2064},
   MRCLASS = {60K35 (60K37)},
  MRNUMBER = {3383336},
MRREVIEWER = {David\ A.\ Croydon},
       DOI = {10.1007/s00440-014-0578-4},
       URL = {https://doi.org/10.1007/s00440-014-0578-4},
}

@misc{olzhabayev2025biasedrandomwalkcritical,
      title={Biased random walk on the critical curve of dynamical percolation}, 
      author={Assylbek Olzhabayev and Dominik Schmid},
      year={2025},
      eprint={2502.08568},
      archivePrefix={arXiv},
      primaryClass={math.PR},
      url={https://arxiv.org/abs/2502.08568}, 
}

@article{staticConductancesMonotonicity,
author = {Noam Berger and Nina Gantert and Jan Nagel},
title = {{The speed of biased random walk among random conductances}},
volume = {55},
journal = {Annales de l'Institut Henri Poincaré, Probabilités et Statistiques},
number = {2},
publisher = {Institut Henri Poincaré},
pages = {862 -- 881},
keywords = {Effective velocity, Random conductances, Random walk in random environment, Regeneration times},
year = {2019},
doi = {10.1214/18-AIHP901},
URL = {https://doi.org/10.1214/18-AIHP901}
}
\end{document}